\newtheorem{prop}{Proposition}[section]
\newtheorem{theo}[prop]{Theorem}
\newtheorem{defn}[prop]{Definition}
\newtheorem{rem}[prop]{Remark}
\newtheorem{lem}[prop]{Lemma}
 \newcommand{\bx}{\mbox{\boldmath $x$}}
\newcommand{\bv}{\mbox{\boldmath $v$}}
\newcommand{\bu}{\mbox{\boldmath $u$}}
\newcommand{\bN}{\mbox{\boldmath $N$}}
\newcommand{\R}{\mbox{\boldmath $\mathbb{R}$}}
\newenvironment{proof}
{\begin{trivlist} \item[\hskip \labelsep {\bf Proof}\hspace*{3 mm}]}
	{\hfill$\Box$\end{trivlist}}
\newenvironment{acknow}
{\begin{trivlist} \item[\hskip \labelsep {\bf Acknowledgments.}]}
	{\end{trivlist}}
\date{}
\begin{document}

\title{On the multiplicity of umbilic points}
\author{Marco Ant\^onio do Couto Fernandes and Farid Tari}

\maketitle
\begin{abstract}
We introduce an invariant of umbilic points on surfaces in the Euclidean or Minkowski 3-space that
counts the maximum number of stable umbilic points they can split up 
under deformations of the surfaces. 
We call that number the {\it multiplicity of the umbilic point} and establish its properties.
\end{abstract}

\renewcommand{\thefootnote}{\fnsymbol{footnote}}
\footnote[0]{2020 Mathematics Subject classification:
	53A55  
	53A35 
	35A05 
	58K05  
}
\footnote[0]{Key Words and Phrases. Umbilic points, lines of principal curvature, multiplicity, singularities.}

\section{Introduction}\label{sec:intro}
Umbilic points on surfaces $M$ in the Euclidean space $\mathbb R^3$ or in the Minkowski space $\mathbb R^3_1$ attract a lot of attention (see for example \cite{Gut_Soto_Resenha} for a survey article and \cite{Ando_etal,bruce84, bruce-fidal, Ghomi_Howard,Hasegawa_Tari,porteousBook, Smith_Xavier,SotoLuis, sotogut, CaratheodoryR31} for some recent developments).
The Carath\'eodory conjecture which states that any sufficiently smooth, convex and closed surface in $\mathbb R^3$ has at least two umbilic points is still open. 
(The conjecture is true for surfaces in $\mathbb R^3_1$,  \cite{CaratheodoryR31}.) 

Umbilic points on  a surface $M$ in $\mathbb R^3$ or $\mathbb R^3_1$ 
are the singular points of the lines of principal curvature (these can be extended at points where the induced pseudo metric on $M\subset \mathbb R^3_1$ is degenerate, \cite{IzumiyaTari}). They are the points where all the coefficients of the binary differential equation (BDE) of these lines vanish.

Umbilic points on a generic surface are stable, that is, they persist under small deformations of the surface. However, the BDE of these lines is not stable when deformed within the set of all BDEs; see Figure \ref{fig:bifStar}.

\begin{figure}[htp]
	\begin{center}
		\includegraphics[scale=0.6]{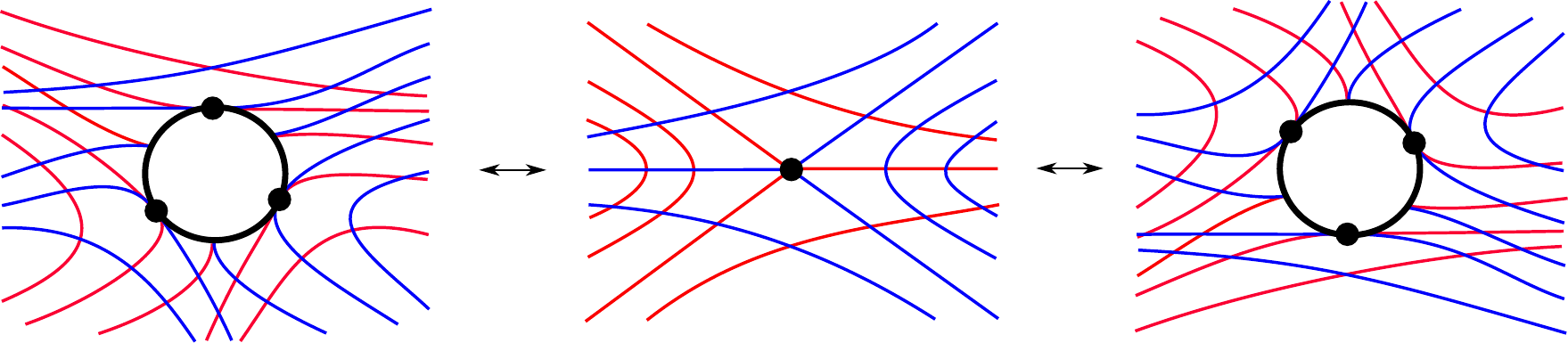}
		\caption{Generic bifurcations, within the set of all BDEs, of the configuration of the solutions of the BDE of the lines of principal curvature at a star umbilic. See \cite{BFT, Fletcher} for a geometric family of BDEs on surfaces realising this bifurcation.}
		\label{fig:bifStar}
	\end{center}
\end{figure}

We introduce in this paper an invariant of a surface $M$ at an umbilic point and call it the {\it multiplicity of the umbilic point}. The multiplicity counts the maximum number of stable umbilic points that can appear under small deformations of the surface.

An invariant of a BDE at its singular points, called the {\it multiplicity of the BDE}, is introduced in \cite{mult} and counts the maximum number of well folded singularities that can appear in a deformation the BDE (in Figure  \ref{fig:bifStar}, the multiplicity is $3$ and the folded singularities are indicated in full discs). 

We establish a relation between the multiplicity of an umbilic point and the multiplicity of the BDE of the lines of curvature at that point (Theorem \ref{theo:mult&multUmb}). We characterise in various ways umbilic points of multiplicity $1$ (Theorems \ref{theo:mult1_Trans_Jetspave}, \ref{theo:mult_1_lpl}, \ref{teo:mult_versal}) and prove that there exist a local deformation of the surface such that the umbilic points surging from a given degenerate umbilic all have multiplicity $1$ (Theorem \ref{theo:defm}). 
We characterise the multiplicity of a degenerate umbilic in terms of the singularities of some special curves on the surface (Theorems \ref{theo:LDMilnor} and \ref{theo:LPLSimple}). 
We apply our results in \S\ref{sec:appl} to codimension 1 and 2 umbilic points studied in \cite{GarciaSoto,GarciaSotoGut} 
and  to surfaces with a cross-cap singularity.

We prove some results in the Appendix (\S \ref{sec:appendix}) on generic configurations of the lines of principal curvature. We write the surface locally as the graph of a function $f$. The configurations depend only the cubic part of $f$ at the umbilic. We stratify the set of these cubics where the open strata consists of stable topologically equivalent configurations of the lines of principal curvature. The results in the Appendix are used in \S \ref{sec:mult}.

We state our results for umbilc points on surfaces in the Minkowski 3-space as  umbilic points on surfaces in the Euclidean 3-space  have the same properties of
spacelike umbilics.

\section{Preliminaries} \label{sec:prel}

The {\it Minkowski space} $(\mathbb{R}_1^{3},\langle ,\rangle )$ is the vector
space $\mathbb{R}^{3}$ endowed with the metric induced by the pseudo-scalar product
$
\langle \bu, \bv\rangle =u_0v_0+u_1v_1-u_2v_2$, for any vectors $\bu=(u_0,u_1,u_2)$ and $\bv =(v_0,v_1, v_2)$ in
$\mathbb{R}^{3}$ (see for example \cite{oNeill} and \cite{Couto_Lymb} for a treatment of the geometry of surfaces in $\mathbb R^3_1$).
A non-zero vector $\bu\in \mathbb R^3_1$ is said to be {\it spacelike} if
$\langle \bu, \bu\rangle>0$, {\it lightlike} if
$\langle \bu, \bu\rangle=0$ and {\it timelike} if $\langle \bu, \bu\rangle<0$.
The norm of a vector $\bu\in \mathbb{R}_1^{3}$ is defined by
$\Vert \bu \Vert=\sqrt{ |\langle \bu, \bu\rangle|}.$

Let $M$ be a smooth and regular surface in $\mathbb R^3_1$ and let 
$\bx:U\subset \mathbb R^2\to M\subset \mathbb{R}_1^{3}$ be a local parametrisation of $M$.
As our definition of the multiplicity is local in nature, we shall
simplify notation  and write $\bx(U)=M$. Let
$$
E=\langle {\bx}_u,{\bx}_u \rangle,\quad
F=\langle{\bx}_u,{\bx}_v\rangle,\quad
G=\langle {\bx}_v,{\bx}_v\rangle
$$
denote the coefficients of the first fundamental form of $M$ with respect to the parametrisation $\bx$, where subscripts denote partial derivatives. The induced (pseudo) metric on $M$ is Lorentzian (resp. Riemannian, degenerate) at ${\rm p}=\bx(u,v)$ if, and only if, $(F^2-EG)(u,v)>0$ 
(resp. $<0$, $=0$). The locus of points on the surface where the metric is degenerate 
is called the {\it locus of degeneracy} and is denoted by $LD$.  
We identify the $LD$ on $M$ with its pre-image in $U$ by $\bx$. Then the $LD$ (in $U$) is given by
$$
LD=\{(u,v)\in U\, |\, (F^2-EG)(u,v)=0\}.
$$

A direction  $(du,dv)\in T_{\rm p}M$ is lightlike, if and only if
\begin{equation}\label{eq:LightBDE}
	Edu^2+2Fdudv+Gdv^2=0.
\end{equation}
Equation (\ref{eq:LightBDE}) has two  (resp. no) solutions when ${{\rm p}}$ is in the Lorentzian (resp. Riemannian) region of $M$.
At points on the $LD$, there is a unique (double) solution of the equation.

At ${{\rm p}} \in M\setminus LD$, we have a well defined unit normal vector (the Gauss map) $\bN=\bx_u\times\bx_v/||\bx_u\times\bx_v||$,
which is timelike (resp. spacelike) if ${\rm p}$ is in the Riemannian (resp. Lorentzian) region of $M$.
(See \cite{pei} for a definition of an $\mathbb RP^2$-valued Gauss map.) The map $A_{{\rm p}}=-d\bN_{{\rm p}}:T_{{\rm p}}M\to T_{{\rm p}}M$ is a self-adjoint
operator on $M\setminus LD$. We denote by
$$\begin{array}{ccccc}
	l&=&-\langle \bN_u,\bx_{u}\rangle&=& \langle \bN,\bx_{uu}\rangle,\\
	m&=&-\langle \bN_u,\bx_{v}\rangle&=&\langle \bN,\bx_{uv}\rangle,\\
	n&=&-\langle \bN_v,\bx_{v}\rangle&=&\langle \bN,\bx_{vv}\rangle
\end{array}
$$
the coefficients of the second fundamental form on $M\setminus LD$.
When $A_{{\rm p}}$ has real eigenvalues $\kappa_1$ and $\kappa_2$, we call them the {\it principal curvatures} and their associated eigenvectors the {\it principal directions}
of $M$ at ${{\rm p}}$. There are always two principal curvatures at each point on the Riemannian part of $M$ but this is not always
true on its Lorentzian part.
A point ${\rm p}$ on $M$ is called an {\it umbilic point} if  $\kappa_1=\kappa_2$ at ${\rm p}$ (i.e., if $A_p$ is a multiple of the identity map). It is called a {\it spacelike umbilic point} (resp. {\it timelike umbilic point}) if ${\rm p}$ is in the Riemannian (resp. Lorentzian) 
part of $M$.

The lines of principal curvature, which are the integral curves of the principal directions, are the solutions of the binary differential equation (BDE)
\begin{equation}\label{eq:principalBDE}
	(Fn-Gm)dv^2+(En-Gl)dvdu+(Em-Fl)du^2=0.
\end{equation}

The discriminant of the BDE (\ref{eq:principalBDE}) 
$$\{
(u,v)\in U | (En-Gl)^2-4(Fn-Gm)(Em-Fl)=0\}
$$
consists of the umbilic points in the Riemannian region of $M$ and is the locus of points in the Lorentzian region where two principal directions coincide and become lightlike. It is labelled {\it Lightlike Principal Locus} $(LPL)$ in \cite{Izu-Machan-Farid, pairs}. The principal directions are orthogonal when there are two of them at a given point; in particular one is spacelike and the other is timelike if the point is in the Lorentzian region.

One can extend the lines of principal curvature across the $LD$ as follows (\cite{IzumiyaTari}).
As equation (\ref{eq:principalBDE}) is homogeneous in $l,m,n$,
we can multiply these coefficients by $||\bx_u\times\bx_v||$ and substitute them in the equation by
\[
\bar{l}=\langle \bx_u\times\bx_v,\bx_{uu}\rangle, \quad
\bar{m}=\langle \bx_u\times\bx_v,\bx_{uv}\rangle,\quad
\bar{n}=\langle \bx_u\times\bx_v,\bx_{vv}\rangle.
\]

The new equation 
\begin{equation}\label{eq:principalLD}
	(\omega_P):	(G\bar{m}-F\bar{n})dv^2+(G\bar{l}-E\bar{n})dudv+(F\bar l-E\bar{m})du^2=0
\end{equation}
is defined at points on the $LD$ and
its solutions are the same as those of equation (\ref{eq:principalBDE}) in the Riemannian and Lorentzian regions of $M$.

We define a {\it lightlike umbilic point} as a point on the $LD$ where all the coefficients of equation  (\ref{eq:principalLD}) vanish (see \cite{CaratheodoryR31}). 
This occurs if, and only if, the $LD$ is singular (\cite{CaratheodoryR31}). The $LPL$ (extended to the $LD$ as the discriminant of equation (\ref{eq:principalLD})) 
passes through lightlike umbilic points and is also singular at that point \cite{MarcoThesis}.

We use here some concepts of singularity theory (see \cite{arnold} and \cite{wall}). Two germs of functions $f_i:\mathbb R^n,0\to \mathbb R,0, i=1,2$ are said to be $\mathcal R$-equivalent if $f_2=f_1\circ h^{-1}$ for some germ of a diffeomorphism $h$. 
The  simple singularities of germs of functions are classified by Arnold. Representatives of their $\mathcal R$-orbits when  $n=2$ are as follows 
 $$
 A_k: \pm(x^2\pm y^2),k\ge 2,\, D_k:x^2y\pm y^{k-1},k\ge 4,\, E_6:x^3+y^4,\, E_7:x^3+xy^3,\, E_8:x^3+y^5.
 $$

For $n\ge 3$ one adds the quadratic form $\pm x_3^2\pm \cdots\pm x_n^2$ to the above normal forms.

\section{Multiplicity of umbilic points}\label{sec:mult}
BDEs are studied extensively (see for example \cite{pairs} for a survey article). 
These are equations that can be written in the form
\begin{equation}\label{eq:genBDE}
	\omega: a(u,v)dv^2+b(u,v)dudv+c(u,v)du^2=0,
\end{equation}
where $a,b,c$ are smooth or analytic functions defined in some open set $U\subset \mathbb R^2$. 
We take $U$ to be a neighbourhood of the origin.

For analytic $a,b,c$, {\it the multiplicity} at the origin of BDE
(\ref{eq:genBDE}) is defined in \cite{mult} as the maximum number of 
folded singularities (see \S \ref{sec:appendix} for definition) that can appear in a deformation of the equation, and  is given by
$$
m(\omega)=\frac{1}{2}\dim_{\mathbb C} {{\mathcal O}_2}/{\langle\delta,a\delta_u^2-b\delta_u\delta_v+c\delta_v^2\rangle},
$$
where ${\mathcal O}_2$ denotes the ring of germs of holomorphic functions in two variables. 

The BDE of the lines of principal curvature of a surface in $\mathbb R^3_1$ (or $\mathbb R^3$) at a generic umbilic point 
is not stable when deformed in the set of all BDEs (see \cite{1pbde} for its 
bifurcations in generic 1-parameter families of BDEs and Figure \ref{fig:bifStar} for an example; the multiplicity of the BDE is equal to $3$). 
However, a generic umbilic point persists  when deforming the surface. 
We define here an invariant which counts the maximum number of generic umbilc points that can appear when deforming a surface at a non-generic umbilic point.

\begin{defn}\label{def:mult}
	Let $M$ be a smooth and regular surface in $\mathbb R^3_1$ {\rm (}or $\mathbb R^3${\rm )}
	and let ${\rm p} \in M$ be an umbilic point. {\rm The multiplicity of the umbilic} ${\rm p}$, denoted by  $m_{u}({\rm p})$,
is the maximum number of umbilc points it can split up into under deformations of the surface $M$.
\end{defn}

In the rest of the paper, we denote by 
$$
\mathfrak{a}=F\bar{n}-G\bar{m},\, \mathfrak{b}=E\bar{n}-G\bar{l},\ \mathfrak{c}=E\bar{m}-F\bar l
$$
the coefficients of the BDE of the lines of principal curvature (we use $l,m,n$ at points away from the $LD$). 

A point ${\rm p}={\bx}(q)$  is an umbilic point if, and only if, all the coefficients 
of equation (\ref{eq:principalLD}) vanish at $q$, that is, $\mathfrak{a}(q)=\mathfrak{b}(q)=\mathfrak{c}(q)=0$.
A crucial observation that allows us to compute $m_{u}({\rm p})$ is that we only need two of those coefficients to vanish 
for $ {\rm p}$ to be an umbilic point.

\begin{lem}\label{lem:2_cond_umb}
Let ${\rm p}={\bx}(q)\in M$ be an umbilic point. Then, 
\begin{itemize}
\item[{\rm (i)}] If $E(q) \neq 0$, $\mathfrak{b}(q) = \mathfrak{c}(q) = 0$ implies $\mathfrak{a}(q) = 0$.
\item[{\rm (ii)}] If $F(q) \neq 0$, $\mathfrak{a}(q) = \mathfrak{c}(q) = 0$ implies $\mathfrak{b}(q) = 0$.
\item[{\rm (iii)}] If $G(q) \neq 0$,  $\mathfrak{a}(q) = \mathfrak{b}(q) = 0$ implies  $\mathfrak{c}(q) = 0$.
\end{itemize}
\end{lem}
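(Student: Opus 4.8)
The plan is to derive all three implications from a single algebraic identity relating the coefficients of equation (\ref{eq:principalLD}) to those of the first and (modified) second fundamental forms. First I would observe that the triple $(\mathfrak a,\mathfrak b,\mathfrak c)$ is, up to signs, the collection of $2\times 2$ minors of the $2\times 3$ matrix with rows $(E,F,G)$ and $(\bar l,\bar m,\bar n)$; equivalently, $(\mathfrak a,-\mathfrak b,\mathfrak c)$ is the Euclidean vector product $(E,F,G)\times(\bar l,\bar m,\bar n)$. Since in $\mathbb R^3$ a vector is orthogonal, for the standard scalar product, to its vector product with any other vector, pairing $(E,F,G)$ with this product yields the identity
$$
E\mathfrak a-F\mathfrak b+G\mathfrak c=0,
$$
valid at every point of $U$ (and which one also checks in one line by expanding, the six terms cancelling in pairs). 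Note that this identity does not use that ${\rm p}$ is an umbilic point: it holds pointwise on the whole parametrised surface.

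Given the identity, each part is immediate. For (i), evaluating at $q$ and using $\mathfrak b(q)=\mathfrak c(q)=0$ gives $E(q)\,\mathfrak a(q)=0$, so $\mathfrak a(q)=0$ whenever $E(q)\neq 0$; parts (ii) and (iii) follow the same way, dividing instead by $F(q)$, respectively $G(q)$.

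There is no genuine obstacle here: the whole content is spotting the relation $E\mathfrak a-F\mathfrak b+G\mathfrak c=0$, which is just the Plücker-type relation among the maximal minors of a $2\times 3$ matrix (equivalently, $v\cdot(v\times w)=0$). It may be worth recording, for the computation of $m_u({\rm p})$ later, that the lemma in fact identifies the umbilic locus $\{\mathfrak a=\mathfrak b=\mathfrak c=0\}$, near a point where $E$ (resp. $F$, $G$) is nonzero, with the common zero set of just the two coefficients $\mathfrak b,\mathfrak c$ (resp. $\mathfrak a,\mathfrak c$; resp. $\mathfrak a,\mathfrak b$), so that the umbilic condition is locally cut out by two equations rather than three.
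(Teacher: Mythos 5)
Your proof is correct and rests on exactly the same identity the paper uses, namely $E\mathfrak a-F\mathfrak b+G\mathfrak c=0$ (the paper writes it for case (iii) as $G(E\bar m-F\bar l)=-E(F\bar n-G\bar m)+F(E\bar n-G\bar l)$ and notes the other cases are analogous). The cross-product/Plücker interpretation is a pleasant way to see why the identity holds, but the argument is essentially identical.
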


\begin{proof}
The proof is straightforward. 
For instance, for case (iii) we observe that  
$G(E\bar{m}-F\bar l)=-E(F\bar{n}-G\bar{m})+F(E\bar{n}-G\bar{l})$, so as $G(q)\ne 0$, 
$\mathfrak{a}(q)=\mathfrak{b}(q)=0$ imply   $\mathfrak{c}(q)=0$.
\end{proof}

\begin{lem}\label{lem:OneCoeffNotZero}
Let $M$ be a smooth surface in $\mathbb R^3_1$ and ${\rm p}$ a point on $M$. Then the coefficients of the first fundamental form $E,F,G$ cannot all vanish at $\rm p$. 
\end{lem}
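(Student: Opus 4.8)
The plan is to argue by contradiction, supposing $E(\mathrm p)=F(\mathrm p)=G(\mathrm p)=0$, and to extract information about the tangent plane $T_{\mathrm p}M=\mathrm{span}\{\bx_u,\bx_v\}$ from the vanishing of all three coefficients of the first fundamental form. The key observation is that $E=\langle \bx_u,\bx_u\rangle$, $F=\langle \bx_u,\bx_v\rangle$, $G=\langle \bx_v,\bx_v\rangle$ are exactly the Gram matrix entries of the pair $\{\bx_u,\bx_v\}$ with respect to the Lorentzian form $\langle\,,\,\rangle$; their simultaneous vanishing says that the restriction of $\langle\,,\,\rangle$ to the $2$-plane $T_{\mathrm p}M$ is identically zero, i.e.\ $T_{\mathrm p}M$ is a totally isotropic subspace of $\mathbb R^3_1$.

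The heart of the argument is then a linear-algebra fact about the signature $(+,+,-)$ form on $\mathbb R^3$: a totally isotropic subspace has dimension at most $1$. First I would recall that $\{\bx_u,\bx_v\}$ is a linearly independent set because $M$ is a regular surface (this is what regularity means for the parametrisation $\bx$), so $T_{\mathrm p}M$ is genuinely $2$-dimensional. Then I would invoke the standard bound: for a non-degenerate symmetric bilinear form of signature $(p,q)$ on a finite-dimensional space, the maximal dimension of a totally isotropic subspace is $\min(p,q)$. Here $(p,q)=(2,1)$, so that maximal dimension is $1$, contradicting $\dim T_{\mathrm p}M=2$. Hence $E,F,G$ cannot all vanish.

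Since the paper may prefer a self-contained elementary argument over citing the signature bound, I would include a short direct verification: pick an orthonormal basis $\be_0,\be_1,\be_2$ with $\langle\be_0,\be_0\rangle=\langle\be_1,\be_1\rangle=1$, $\langle\be_2,\be_2\rangle=-1$, write $\bx_u=a_0\be_0+a_1\be_1+a_2\be_2$ and $\bx_v=b_0\be_0+b_1\be_1+b_2\be_2$. Then $E=a_0^2+a_1^2-a_2^2=0$ forces $a_2^2=a_0^2+a_1^2$, and similarly $b_2^2=b_0^2+b_1^2$ from $G=0$; meanwhile $F=a_0b_0+a_1b_1-a_2b_2=0$. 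Applying Cauchy--Schwarz in the Euclidean plane to the vectors $(a_0,a_1)$ and $(b_0,b_1)$ gives $|a_0b_0+a_1b_1|\le\sqrt{a_0^2+a_1^2}\sqrt{b_0^2+b_1^2}=|a_2||b_2|=|a_2b_2|$, with equality in $F=0$ forcing equality in Cauchy--Schwarz, hence $(a_0,a_1)$ and $(b_0,b_1)$ are linearly dependent; combining this with $a_2,b_2$ being determined up to sign by the first two coordinates shows $\bx_u$ and $\bx_v$ are linearly dependent, contradicting regularity.

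I do not anticipate a genuine obstacle here; the statement is essentially the elementary fact that a Lorentzian $3$-space contains no totally isotropic $2$-plane. The only point requiring a little care is making the Cauchy--Schwarz equality case rigorous, in particular handling the degenerate sub-cases where $(a_0,a_1)$ or $(b_0,b_1)$ is the zero vector (then the corresponding $\bx_u$ or $\bx_v$ vanishes entirely, immediately violating regularity). I would organise the elementary version to dispatch those zero sub-cases first and then treat the generic case via the equality condition in Cauchy--Schwarz.
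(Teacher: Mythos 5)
Your proposal is correct and rests on the same underlying fact as the paper's own (one-line) proof, namely that a $2$-dimensional subspace of $\mathbb R^3_1$ cannot be totally isotropic: the paper states this positively (the tangent plane of a regular surface always contains a spacelike vector, which one takes as $\bx_u$ so that $E>0$) and cites it without further justification, whereas you argue by contradiction and supply the elementary signature-bound/Cauchy--Schwarz verification. The mathematical content is the same; your version is simply more self-contained.
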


\begin{proof}
As $M$ is a regular surface in $\mathbb R^3_1$, its tangent space at any point $\rm p$ contains a spacelike vector. We choose a local 
parametrisation ${\bx}$ of $M$, with ${\rm p}={\bx}(q)$, such that ${\bx}_u(q)$ is that vector. It follows that $E({\rm q})> 0$.
\end{proof}

Lemma \ref{lem:2_cond_umb} and Lemma \ref{lem:OneCoeffNotZero} allow us to use the results in \cite{arnold} on the multiplicity of holomorphic maps 
to compute $m_u({\rm p})$ when $M$ is analytic (see Remark \ref{rem:MultSmooth}). For holomorphic germs $h,k: \mathbb C^2,0\to \mathbb C,0$, we denote the codimension of the ideal 
${\langle h,k \rangle}$ in $\mathcal O_2$ by $m(h,k)$, so
$m(h,k)=\dim_{\mathbb C} {\mathcal O_2}/{\langle h,k \rangle}.$

\begin{prop}\label{prop:mult_formula}
Let $M$ be an analytic surface in $ \mathbb R^3_1$ and ${\rm p}$ an umbilic point on $M$.
Then the multiplicity of ${\rm p}$ is given by
$$
m_u({\rm p}) = \left\{ 
\begin{array}{lll}
 m(\mathfrak{b},\mathfrak{c})  & {\rm if}& E(q) \neq 0,\\
m(\mathfrak{a},\mathfrak{c})& {\rm if}& F(q) \neq 0,\\
m(\mathfrak{a},\mathfrak{b})& {\rm if}& G(q) \neq 0,
\end{array}
\right.
$$
where $\mathfrak{a},\mathfrak{b},\mathfrak{c}$ denote the complexifications of their real analytic counterpart $\mathfrak{a},\mathfrak{b},\mathfrak{c}$.
\end{prop}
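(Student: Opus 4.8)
The plan is to reduce the computation of $m_u({\rm p})$ to the multiplicity of a pair of holomorphic germs by combining the two lemmas that precede the statement. First I would observe that, by Lemma \ref{lem:OneCoeffNotZero}, at least one of $E(q),F(q),G(q)$ is nonzero, so the three cases in the formula genuinely cover all umbilic points; I would treat each case separately, and by symmetry it suffices to carry out one of them in detail, say the case $G(q)\neq 0$. Here the key point is the algebraic identity already used in the proof of Lemma \ref{lem:2_cond_umb}(iii), namely
$$
G\,\mathfrak{c}=-E\,\mathfrak{a}+F\,\mathfrak{b},
$$
which shows that near $q$ the zero set of $(\mathfrak{a},\mathfrak{b})$ coincides with the zero set of $(\mathfrak{a},\mathfrak{b},\mathfrak{c})$, since $G$ is a unit in $\mathcal O_2$ after complexification.

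The next step is to pass to the complexified, holomorphic setting. I would invoke Remark \ref{rem:MultSmooth} (and the surrounding discussion) to justify that for an analytic surface the multiplicity $m_u({\rm p})$ — defined as the maximal number of umbilic points appearing in a deformation — is computed by the local intersection multiplicity of the complexified equations cutting out the umbilic locus, exactly as in Arnold's treatment of the multiplicity of holomorphic maps in \cite{arnold}. Since an umbilic point is by definition a common zero of $\mathfrak{a},\mathfrak{b},\mathfrak{c}$, a naive guess would be that $m_u({\rm p})$ is the multiplicity of the ideal $\langle\mathfrak{a},\mathfrak{b},\mathfrak{c}\rangle$; but a map germ $\mathbb C^2,0\to\mathbb C^3,0$ is overdetermined and its ``multiplicity'' is not the right invariant. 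The content of Lemma \ref{lem:2_cond_umb} is precisely that one of the three coefficients is redundant as a defining equation: when $G(q)\neq 0$ the ideal $\langle\mathfrak{a},\mathfrak{b}\rangle$ already contains $\mathfrak{c}$ (by the identity above, $\mathfrak{c}=G^{-1}(-E\mathfrak{a}+F\mathfrak{b})\in\langle\mathfrak{a},\mathfrak{b}\rangle$ in $\mathcal O_2$), hence
$$
\langle\mathfrak{a},\mathfrak{b},\mathfrak{c}\rangle=\langle\mathfrak{a},\mathfrak{b}\rangle
$$
as ideals in $\mathcal O_2$, and the deformation-theoretic count of umbilics is the intersection number $m(\mathfrak{a},\mathfrak{b})=\dim_{\mathbb C}\mathcal O_2/\langle\mathfrak{a},\mathfrak{b}\rangle$. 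I would then note that this quantity is finite exactly when ${\rm p}$ is an isolated umbilic point, which is the hypothesis implicit in speaking of its multiplicity.

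To finish, I would verify well-definedness: in the region where two of $E,F,G$ are simultaneously nonzero, the two candidate formulas must agree, and this follows again from the identities of the type $G\mathfrak{c}=-E\mathfrak{a}+F\mathfrak{b}$ (and its two cyclic analogues $F\mathfrak{b}=-G\mathfrak{c}+E\mathfrak{a}$ relabelled appropriately), since multiplying a generator of an ideal by a unit and swapping it for another generator already in the ideal does not change the quotient $\mathcal O_2/\langle\cdot,\cdot\rangle$. The main obstacle I anticipate is not the algebra — which is elementary — but the careful justification that the real deformation count equals the complex intersection multiplicity; concretely, that a generic small deformation of the two chosen coefficients among $\mathfrak{a},\mathfrak{b},\mathfrak{c}$ is realised by an actual deformation of the surface $M$, and that all the umbilics so produced are real, stable (Darbouxian) umbilics. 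This is where one must lean on the normal-form/transversality machinery and on Remark \ref{rem:MultSmooth}, and it is the step that makes the statement a theorem about surfaces rather than a tautology about ideals; I expect the authors to handle it either by an explicit versal-deformation argument or by citing the corresponding fact for the multiplicity of the BDE from \cite{mult}.
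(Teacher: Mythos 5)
Your proposal follows essentially the same route as the paper: invoke Lemma \ref{lem:OneCoeffNotZero} to guarantee one of $E,F,G$ is nonzero, use the identity behind Lemma \ref{lem:2_cond_umb} to discard the redundant coefficient (so the umbilic locus is cut out by two equations), and then appeal to Arnold's results on the multiplicity of holomorphic map-germs to identify the deformation count with $\dim_{\mathbb C}\mathcal O_2/\langle\cdot,\cdot\rangle$. The paper's own proof is in fact terser than yours --- it does not spell out the ideal equality $\langle\mathfrak a,\mathfrak b,\mathfrak c\rangle=\langle\mathfrak a,\mathfrak b\rangle$ nor the point you rightly flag about realising generic deformations of the coefficients by actual deformations of the surface, leaving both to the citation of \cite{arnold}.
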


\begin{proof}
By Lemma \ref{lem:OneCoeffNotZero} the coefficients $E,F,G$ 
cannot all vanish at any given point on $M$. It follows by Lemma \ref{lem:2_cond_umb} and the results in \cite{arnold} that the number of umbilic points 
concentrated at ${\rm p}={\bx}(q)$ is the multiplicity at $q$ of the ideal ${\langle \mathfrak{b},\mathfrak{c}\rangle}$ in case (i), 
 ${\langle \mathfrak{a},\mathfrak{c} \rangle}$ in case (ii) or  ${\langle \mathfrak{a},\mathfrak{b} \rangle}$ in case (iii).  
\end{proof}

\begin{rem}\label{rem:MultSmooth}
{\rm 
For a smooth surface, if say $G(q)\ne 0$ and the map-germ $H=(\mathfrak{a},\mathfrak{b}):\mathbb R^2,0\to \mathbb R^2,0$ is $l$-$\mathcal K$-finitely determined (see \cite{wall} for notation), we have 
$m(\mathfrak{a},\mathfrak{b})=m(j^l\mathfrak{a},j^l\mathfrak{b})$, where 
$j^lg$ denotes the $l$-jet of the function-germ $g$ at the origin, i.e., its Taylor polynomial of order $l$ without the constant term. We can then substitute $\mathfrak{a},\mathfrak{b}$ by the polynomial germs $j^l\mathfrak{a}$ and 
$j^l\mathfrak{b}$ and proceed as for the analytic case.
}
\end{rem}

\begin{prop}
The multiplicity of an umbilic point is invariant under local re-parametrisation of the surface and Lorentzian changes of coordinates in the ambient space $\mathbb R^3_1$.
\end{prop}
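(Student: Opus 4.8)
Conceptually the invariance should be clear: an umbilic point is defined without reference to a parametrisation and is preserved by Lorentzian isometries of the ambient space, while a deformation of $M$ in one chart (resp.\ in one Lorentzian coordinate system) corresponds bijectively to one in any other, transporting the umbilic points that surge from ${\rm p}$ one-to-one; so Definition~\ref{def:mult} returns the same number. The plan is to make this precise by working with the formula of Proposition~\ref{prop:mult_formula}. First I would record that its three cases can be unified: the coefficients satisfy the single identity $E\mathfrak{a}-F\mathfrak{b}+G\mathfrak{c}=0$ (the syzygy underlying the proof of Lemma~\ref{lem:2_cond_umb}), so if $E(q)\neq 0$, $F(q)\neq 0$ or $G(q)\neq 0$ then $\langle\mathfrak{a},\mathfrak{b},\mathfrak{c}\rangle$ equals $\langle\mathfrak{b},\mathfrak{c}\rangle$, $\langle\mathfrak{a},\mathfrak{c}\rangle$ or $\langle\mathfrak{a},\mathfrak{b}\rangle$ respectively; since by Lemma~\ref{lem:OneCoeffNotZero} at least one of these holds, Proposition~\ref{prop:mult_formula} gives in every case
\[
m_u({\rm p})=\dim_{\mathbb C}\mathcal{O}_2/\langle\mathfrak{a},\mathfrak{b},\mathfrak{c}\rangle
\]
(with complexified coefficients, and with $\mathfrak{a},\mathfrak{b},\mathfrak{c}$ replaced by suitable jets in the smooth case, cf.\ Remark~\ref{rem:MultSmooth}). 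It is therefore enough to show that the ideal $\langle\mathfrak{a},\mathfrak{b},\mathfrak{c}\rangle\subset\mathcal{O}_2$ is preserved up to precomposition by a diffeomorphism germ under each of the two operations, since such a precomposition is a $\mathbb C$-algebra automorphism of $\mathcal{O}_2$ and hence preserves the dimension of the quotient.

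Next I would treat a re-parametrisation $\phi$, with $\phi(\tilde q)=q$ and $J=D\phi$. I would start from the fact that the principal BDE is, up to sign, $\det(\mathrm{II}\,\xi,\mathrm{I}\,\xi)=0$, where $\mathrm{I}$ and $\mathrm{II}$ are the $2\times 2$ matrices of the first and second fundamental forms (the latter formed with $\bar l,\bar m,\bar n$), $\xi=(du,dv)^{\mathrm T}$, and $\det(\cdot,\cdot)$ denotes the determinant of the $2\times 2$ matrix with the indicated columns. Under $\phi$ one has $\widetilde{\mathrm{I}}=J^{\mathrm T}(\mathrm{I}\circ\phi)J$ and $\widetilde{\mathrm{II}}=\det(J)\,J^{\mathrm T}(\mathrm{II}\circ\phi)J$, the key point being that the second derivatives of $\phi$ contribute only tangential terms to $\bx_{uu},\bx_{uv},\bx_{vv}$, which are annihilated by $\bx_u\times\bx_v$ in $\bar l,\bar m,\bar n$. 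A short matrix manipulation then yields the transformation law $\widetilde{\omega_P}=\det(J)^{2}\,\phi^{*}\omega_P$ of quadratic differential forms, equivalently $(\widetilde{\mathfrak{a}},\widetilde{\mathfrak{b}},\widetilde{\mathfrak{c}})=\det(J)^{2}\,(\mathfrak{a}\circ\phi,\mathfrak{b}\circ\phi,\mathfrak{c}\circ\phi)\,S_{\phi}$, where $S_{\phi}$ is the $3\times 3$ matrix (the second symmetric power of $J$, invertible over $\mathcal{O}_2$) that implements the substitution $\xi=J\tilde\xi$. Hence $\langle\widetilde{\mathfrak{a}},\widetilde{\mathfrak{b}},\widetilde{\mathfrak{c}}\rangle=\langle\mathfrak{a}\circ\phi,\mathfrak{b}\circ\phi,\mathfrak{c}\circ\phi\rangle$, and $m_u({\rm p})$ is unchanged.

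For an ambient Lorentzian change of coordinates $\Phi(x)=Ox+v_0$ with $O\in O(2,1)$, I would simply note that $\Phi$ is an isometry, so $\widetilde{E}=E$, $\widetilde{F}=F$, $\widetilde{G}=G$, while $Ou\times Ov=\det(O)\,O(u\times v)$ --- which follows from $\langle u\times v,w\rangle=\det[u\,|\,v\,|\,w]$ together with $O$ preserving $\langle\,,\,\rangle$ --- gives $\widetilde{\bar l}=\det(O)\bar l$, $\widetilde{\bar m}=\det(O)\bar m$, $\widetilde{\bar n}=\det(O)\bar n$. Thus $(\widetilde{\mathfrak{a}},\widetilde{\mathfrak{b}},\widetilde{\mathfrak{c}})=\det(O)(\mathfrak{a},\mathfrak{b},\mathfrak{c})$ with $\det(O)=\pm 1$, so the ideal, and with it $m_u({\rm p})$, is literally unchanged. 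Together with the reduction of the first paragraph this proves the statement; the same computation shows the argument also covers homotheties of the ambient space, which merely rescale $E,F,G,\bar l,\bar m,\bar n$ by units.

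I expect the one genuinely non-formal step to be the transformation law $\widetilde{\omega_P}=\det(D\phi)^{2}\,\phi^{*}\omega_P$, i.e.\ verifying that the principal BDE transforms as a genuine binary differential equation up to a nowhere-vanishing factor, with no extra lower-order terms arising from the second derivatives of $\phi$; once this identity is in place the rest is bookkeeping. A secondary point that must be handled with some care is the uniform formula $m_u({\rm p})=\dim_{\mathbb C}\mathcal{O}_2/\langle\mathfrak{a},\mathfrak{b},\mathfrak{c}\rangle$, which is precisely what is needed so that a re-parametrisation changing which of $E,F,G$ is non-zero at the base point causes no difficulty.
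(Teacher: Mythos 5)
Your proof is correct and follows essentially the same route as the paper: the paper's proof is precisely the transformation law $(\mathfrak{a},\mathfrak{b},\mathfrak{c})^{\mathrm T}=\det(dh)^2\,S_h\,(\mathfrak{a}'\circ h,\mathfrak{b}'\circ h,\mathfrak{c}'\circ h)^{\mathrm T}$ with $S_h$ the (invertible) second symmetric power of $dh$, followed by the observation that the ideal $\langle\mathfrak{a},\mathfrak{b},\mathfrak{c}\rangle$ and hence $\dim_{\mathbb C}\mathcal O_2/\langle\mathfrak{a},\mathfrak{b},\mathfrak{c}\rangle$ is preserved, and by the remark that ambient Lorentzian changes leave the principal BDE unchanged. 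Your derivation of the law via $\det(\mathrm{II}\,\xi,\mathrm{I}\,\xi)$ and your explicit justification of the uniform formula $m_u({\rm p})=\dim_{\mathbb C}\mathcal O_2/\langle\mathfrak{a},\mathfrak{b},\mathfrak{c}\rangle$ via the syzygy $E\mathfrak{a}-F\mathfrak{b}+G\mathfrak{c}=0$ are just more detailed versions of the same argument.
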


\begin{proof}
	Let $\bx'$ be another local parametrisation of $M$ with $\bx = \bx' \circ h$ for some local diffeomorphism $h$ of the plane. Then 
	$$\left(\begin{array}{c}
	\mathfrak{a}\\ \mathfrak{b}\\ \mathfrak{c}
	\end{array}\right) = \det \left(dh\right)^2 \left( \begin{array}{ccc}
		(\frac{\partial h_2}{\partial y})^2 & (\frac{\partial h_1}{\partial y})(\frac{\partial h_2}{\partial y}) & (\frac{\partial h_1}{\partial y})^2\\
		2 (\frac{\partial h_2}{\partial x}) (\frac{\partial h_2}{\partial y}) & (\frac{\partial h_1}{\partial y})(\frac{\partial h_2}{\partial x})+(\frac{\partial h_1}{\partial x})(\frac{\partial h_2}{\partial y}) & 2 (\frac{\partial h_1}{\partial x})(\frac{\partial h_1}{\partial y})\\
		(\frac{\partial h_2}{\partial x})^2 & (\frac{\partial h_1}{\partial x})(\frac{\partial h_2}{\partial x}) & (\frac{\partial h_1}{\partial x})^2
	\end{array}  
\right) 
\left(\begin{array}{c}
	\mathfrak{a}' \circ h\\\mathfrak{b}' \circ h\\\mathfrak{c}' \circ h
	\end{array}\right)$$
where $\mathfrak{a},\mathfrak{b},\mathfrak{c}$ and $\mathfrak{a}',\mathfrak{b}',\mathfrak{c}'$  are the coefficients of the equation of the lines of principal curvature of $M$ with respect to $\bx$ and $\bx'$, respectively. We conclude, using \mbox{Lemma \ref{lem:2_cond_umb}}, that 
$
m_u({\rm p})=\dim_{\mathbb C} {\mathcal O_2}/{\langle \mathfrak{a},\mathfrak{b},\mathfrak{c}  \rangle} = \dim_{\mathbb C}{\mathcal O_2}/{\langle \mathfrak{a}',\mathfrak{b}',\mathfrak{c}' \rangle}.
$

The multiplicity of an umbilic point is also invariant under Lorentzian changes of coordinates in the ambient space as the equation of the lines of principal curvature remains unchanged 
by these transformations.
\end{proof}

\begin{theo}\label{theo:mult&multUmb}
Let ${\rm p}\in M\subset \mathbb R^3_1$ be  umbilic point and suppose that $m_u({\rm p})$ and $m(\omega_P)$ are finite. Then $ m(\omega_P)\ge 3m_u({\rm p})$.
\end{theo}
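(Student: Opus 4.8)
The plan is to compare two algebraic multiplicities attached to the same point: on the one hand $m(\omega_P)$, which by the formula recalled from \cite{mult} equals $\frac12\dim_{\mathbb C}{\mathcal O}_2/\langle\delta,\mathfrak{a}\delta_u^2-\mathfrak{b}\delta_u\delta_v+\mathfrak{c}\delta_v^2\rangle$ where $\delta=\mathfrak{b}^2-4\mathfrak{a}\mathfrak{c}$ is the discriminant; on the other hand $m_u({\rm p})$, which by Proposition \ref{prop:mult_formula} equals $m(\mathfrak{a},\mathfrak{b})$ (up to relabelling, assuming WLOG $G(q)\neq 0$, which is legitimate by Lemma \ref{lem:OneCoeffNotZero} after a reparametrisation). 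First I would fix such a parametrisation and work in the local ring ${\mathcal O}_2$, writing $I=\langle\mathfrak{a},\mathfrak{b}\rangle$, so that $\dim_{\mathbb C}{\mathcal O}_2/I=m_u({\rm p})$ is finite; note that by Lemma \ref{lem:2_cond_umb}(iii), $\mathfrak{c}\in I$ (indeed $G\mathfrak{c}=-E\mathfrak{a}+F\mathfrak{b}$), so $\langle\mathfrak{a},\mathfrak{b},\mathfrak{c}\rangle=I$ and the whole umbilic locus is cut out by $I$.

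The key step is to show the ideal $J:=\langle\delta,\ \mathfrak{a}\delta_u^2-\mathfrak{b}\delta_u\delta_v+\mathfrak{c}\delta_v^2\rangle$ is contained in $I^3$, or at least that $\dim_{\mathbb C}{\mathcal O}_2/J\ge 6\,\dim_{\mathbb C}{\mathcal O}_2/I$. Since $\mathfrak{a},\mathfrak{b},\mathfrak{c}$ all lie in $I$, we have $\delta=\mathfrak{b}^2-4\mathfrak{a}\mathfrak{c}\in I^2$; differentiating, $\delta_u=2\mathfrak{b}\mathfrak{b}_u-4\mathfrak{a}_u\mathfrak{c}-4\mathfrak{a}\mathfrak{c}_u$ and similarly $\delta_v$ lie in $I$ (each term has a factor $\mathfrak{a},\mathfrak{b}$, or $\mathfrak{c}$). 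Hence the second generator $\mathfrak{a}\delta_u^2-\mathfrak{b}\delta_u\delta_v+\mathfrak{c}\delta_v^2$ is a sum of terms each containing one factor from $\{\mathfrak{a},\mathfrak{b},\mathfrak{c}\}\subset I$ times two factors $\delta_u$ or $\delta_v$ from $I$, so it lies in $I^3$; and $\delta\in I^2$. Thus $J\subset I^2$, and more precisely the generator set of $J$ lies in $I^3+I\cdot(\text{the }\delta\text{-generator})$. To convert this to an inequality of colengths I would use the standard fact that for ideals $J'\subset I'$ in ${\mathcal O}_2$ with $I'$ of finite colength, $\dim{\mathcal O}_2/J'\ge \dim{\mathcal O}_2/I'$, together with a length estimate: the associated graded argument shows $\dim{\mathcal O}_2/I^k\ge \binom{k+1}{2}\dim{\mathcal O}_2/I$ is \emph{false} in general, so instead I would argue via intersection multiplicity — $\delta$ and the quartic $\mathfrak{a}\delta_u^2-\mathfrak{b}\delta_u\delta_v+\mathfrak{c}\delta_v^2$ each vanish on $V(I)$ with high order, and the local intersection number $\dim{\mathcal O}_2/J$ is bounded below by the product of the orders of vanishing along each branch of $V(I)$, summed with multiplicity, which is exactly $\mathrm{ord}(\delta)\cdot\mathrm{ord}(\text{quartic})\ge 2\cdot 3$ times the contribution $\dim{\mathcal O}_2/I$ — here using that $\delta\in I^2\setminus\mathfrak m I$ generically and the quartic $\in I^3$.

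The cleanest route, and the one I would actually write up, is: pick generic representatives so that $V(I)$ is a fat point of length $\mu:=m_u({\rm p})$, and compute $\dim{\mathcal O}_2/J$ as the intersection multiplicity $(\delta)\cdot(\mathfrak{a}\delta_u^2-\mathfrak{b}\delta_u\delta_v+\mathfrak{c}\delta_v^2)$ at $0$; by bilinearity of intersection multiplicity and the factorisations $\delta\in I^2$, quartic $\in I^3$, deduce $\dim{\mathcal O}_2/J\ge 2\mu+3\mu+(\text{cross terms})$, and a careful bookkeeping of the cross terms (each pairing of a generator of $I^2$ with a generator of $I^3$ contributing at least $\dim{\mathcal O}_2/I=\mu$ by the same principle) yields the bound $6\mu$, hence $m(\omega_P)=\frac12\dim{\mathcal O}_2/J\ge 3\mu=3m_u({\rm p})$. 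The main obstacle is making the "cross terms contribute $\ge\mu$ each" rigorous without over- or under-counting: intersection multiplicity is not literally multiplicative over ideal products, so I expect to need the precise statement that if $f\in I^a$ and $g\in I^b$ with $V(f)\cap V(g)$ a point, then $\dim{\mathcal O}_2/\langle f,g\rangle\ge ab\cdot\dim{\mathcal O}_2/I$ when $I$ is $\mathfrak m$-primary — proved by passing to a resolution/blow-up where $I$ becomes locally principal and orders of vanishing add along exceptional divisors — and then apply it with $a=2$, $b=3$. That blow-up lemma, or an equivalent valuation-theoretic estimate (assigning to each branch of $V(I)$ its multiplicity and using that $\delta$ resp. the quartic vanish to order $\ge 2$ resp. $\ge 3$ along it), is the technical heart of the argument.
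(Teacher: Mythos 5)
Your strategy is correct and reaches the inequality by a genuinely different route from the paper's. Both arguments start from the same two ingredients --- the formula $m(\omega_P)=\frac12\dim_{\mathbb C}\mathcal O_2/\langle\delta,\mathfrak{a}\delta_u^2-\mathfrak{b}\delta_u\delta_v+\mathfrak{c}\delta_v^2\rangle$ from \cite{mult}, and the observation that (say with $G\neq0$, by Lemmas \ref{lem:2_cond_umb} and \ref{lem:OneCoeffNotZero}) all three coefficients lie in $I=\langle\mathfrak{a},\mathfrak{b}\rangle$, so that $\delta\in I^2$, $\delta_u,\delta_v\in I$ and the second generator lies in $I^3$ --- but they diverge where this membership must be converted into a colength bound. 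The paper does it by hand: it writes $\delta$ and the second generator explicitly as binary forms of degrees $2$ and $3$ in $(\mathfrak{a},\mathfrak{b})$ with function coefficients (treating the timelike case separately in lightlike coordinates, where $\delta=4ln$ factors), and peels off factors using identities of the type $m(fg,h)=m(f,h)+m(g,h)$ until only copies of $m(\mathfrak{a},\mathfrak{b})$ remain; in the timelike case this even yields the exact formula $m(\omega_P)=3m_u({\rm p})+m(l,l_u)+m(n,n_v)$, which your softer argument cannot see. You instead invoke the general principle that if $f\in I^a$ and $g\in I^b$ with $\langle f,g\rangle$ of finite colength then $\dim_{\mathbb C}\mathcal O_2/\langle f,g\rangle\ge ab\,\dim_{\mathbb C}\mathcal O_2/I$, applied with $(a,b)=(2,3)$. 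That principle is true and gives a uniform proof covering the spacelike, timelike and lightlike cases at once, but as written it is asserted rather than proved, and it is the whole content of the argument: your blow-up sketch only delivers $\dim_{\mathbb C}\mathcal O_2/\langle f,g\rangle\ge ab\,e(I)$ with $e(I)$ the Hilbert--Samuel multiplicity (equivalently, quote Risler--Teissier mixed multiplicities, $e(I^a;I^b)=ab\,e(I)$), after which you still need $e(I)\ge\dim_{\mathbb C}\mathcal O_2/I$; this last inequality follows because a generic two-element reduction $\langle h_1,h_2\rangle\subseteq I$ satisfies $\dim_{\mathbb C}\mathcal O_2/\langle h_1,h_2\rangle=e(I)$ and contains fewer elements than $I$, so its colength dominates that of $I$. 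You should also delete the middle passage about ``bilinearity'' and ``cross terms'': intersection multiplicity is not bilinear over sums of ideals, and no such bookkeeping is needed once the lemma is stated correctly. With the lemma properly proved or cited, your proof is complete, and arguably more conceptual than the paper's explicit manipulations, at the cost of importing nontrivial multiplicity theory.
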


\begin{proof}
Intuitively, by deforming the surface, we deform its BDE of the lines of principal curvature. 
Umbilic points are singular points of that BDE and we get a maximum $m_u({\rm p})$ of them. 
As BDE $\omega_{P}$  has multiplicity at least $3$ at umbilic points, 
it should follow that $m(\omega_P)\ge 3m_u({\rm p})$.

Suppose that ${\rm p}$ is a timelike umbilic point. Then we can choose a coordinate system so that the coordinates
curves are lightlike curves (Theorem 3.1 in \cite{IzumiyaTari}), that is, $E\equiv 0$ and $G\equiv 0$ ($F\ne 0$ as the surface is regular). 
Then equation (\ref{eq:principalBDE}) becomes  $(\omega_P): ndv^2-ldu^2=0$ and has 
discriminant function $\delta=4ln$. Then,
$$
\begin{array}{rcl}
	m(\omega_P)&=&\frac{1}{2}m(\delta,\mathfrak{a}\delta_u^2-\mathfrak{b}\delta_u\delta_v+\mathfrak{c}\delta_v^2)\\
	&=&\frac{1}{2}m(ln,n(l_un+ln_u)^2-l(l_vn+ln_v)^2)\\
	&=&\frac{1}{2}m(ln,l_u^2n^3-n_v^2l^3)\\
	&=&\frac{1}{2}(m(l,l_u^2n^3-n_v^2l^3)+m(n,l_u^2n^3-n_v^2l^3))\\
	&=&\frac{1}{2}(6m(l,n)+2m(l,l_u)+2m(n,n_v))\\
	&=&3m_u({\rm p})+m(l,l_u)+m(n,n_v).
\end{array}
$$
It follows that  $m(\omega_P)\ge 3m_u({\rm p})$. 

Suppose now that ${\rm p}$ is a spacelike or lightlike umbilic point. 
If ${\rm p}$ is spacelike, then $E(q)\ne 0$ and $G(q)\ne 0$.
If ${\rm p}$ is a lightlike umbilic point, we cannot have $E(q)=G(q)=0$ as $M$ is a regular surface in $\mathbb R^3_1$ so $T_{{\rm p}}M$ contains a unique lightlike direction. Hence, either $E(q)\ne 0$ or $G(q)\ne 0$. 
We suppose, without loss generality,
that $G(q)\ne 0$. 
It follows that $\mathfrak{c}=(\mathfrak{b}F-\mathfrak{a}E)/G$ (see the proof of Lemma \ref{lem:2_cond_umb}), so 
$$
\begin{array}{rcl}
 m(\omega_P)&=&\frac{1}{2}m(\delta,\mathfrak{a}\delta_u^2-\mathfrak{b}\delta_u\delta_v+\mathfrak{c}\delta_v^2)\\
 &=&\frac{1}{2}m(\mathfrak{b}^2-4\mathfrak{a}\mathfrak{c},\alpha_1 \mathfrak{a}^3+\alpha_2\mathfrak{a}^2\mathfrak{b}+\alpha_3\mathfrak{a}\mathfrak{b}^2+\alpha_4\mathfrak{b}^3)\\
 &=&\frac{1}{2}m(\mathfrak{b}^2-4\mathfrak{a}\mathfrak{c},\alpha_1 \mathfrak{a}^3+\alpha_2\mathfrak{a}^2\mathfrak{b}+4\alpha_3\mathfrak{a}^2\mathfrak{c}+4\alpha_4\mathfrak{a}\mathfrak{b}\mathfrak{c})\\ 
 &=&\frac{1}{2}m(\mathfrak{b}^2-4\mathfrak{a}\mathfrak{c},\mathfrak{a})+\frac{1}{2}m(\mathfrak{b}^2-4\mathfrak{a}\mathfrak{c},\alpha_1 \mathfrak{a}^2+\alpha_2\mathfrak{a}\mathfrak{b}+4\alpha_3\mathfrak{a}\mathfrak{c}+4\alpha_4\mathfrak{b}\mathfrak{c})\\
 &=&m(\mathfrak{a},\mathfrak{b})+\frac{1}{2}m(\mathfrak{b}^2-4\frac{F}{G}\mathfrak{a}\mathfrak{b}+4\frac{E}{G}\mathfrak{a}^2,\beta_1\mathfrak{a}^2+\beta_2\mathfrak{a}\mathfrak{b}+\beta_3\mathfrak{b}^2)\\
 &=&m(\mathfrak{a},\mathfrak{b})+\frac{1}{2}m(\mathfrak{b}^2-4\frac{F}{G}\mathfrak{a}\mathfrak{b}+4\frac{E}{G}\mathfrak{a}^2,{\beta}_7\mathfrak{a}^2+{\beta}_8\mathfrak{a}\mathfrak{b})\\
 & =&2m(\mathfrak{a},\mathfrak{b})+\frac{1}{2}m(\mathfrak{b}^2-4\frac{F}{G}\mathfrak{a}\mathfrak{b}+4\frac{E}{G}\mathfrak{a}^2,{\beta}_7\mathfrak{a}+{\beta}_8\mathfrak{b}),
\end{array}
$$
where $\alpha_i$ and $\beta_i$ are functions depending on $\mathfrak{a},\mathfrak{b},\mathfrak{c}$ and their partial derivatives. As the ideal $I=\langle \mathfrak{b}^2-4\frac{F}{G}\mathfrak{a}\mathfrak{b}+4\frac{E}{G}\mathfrak{a}^2,{\beta}_7\mathfrak{a}+{\beta}_8\mathfrak{b} \rangle$ is contained in the ideal $J=\langle \mathfrak{b}^2, \frac{F}{G}\mathfrak{a}\mathfrak{b}, \frac{E}{G}\mathfrak{a}^2,{\beta}_7\mathfrak{a}+{\beta}_8\mathfrak{b} \rangle$, it follows that
$$
m(I)\ge m(J)\ge 2m(\mathfrak{b},\frac{F}{G}\mathfrak{a}\mathfrak{b},\frac{E}{G}\mathfrak{a}^2,{\beta}_7\mathfrak{a}+{\beta}_8\mathfrak{b})\nonumber\\
=2m(\mathfrak{b},\frac{E}{G}\mathfrak{a}^2,{\beta}_7\mathfrak{a})
\ge  2m(\mathfrak{a},\mathfrak{b}).
$$
Consequently, $m(\omega_P)\ge 3m_u({\rm p})$.
\end{proof}

\subsection{Umbilic points of multiplicity $1$ and deformation of degenerate umbilics }\label{sunsec:mult1}

We characterise now  umbilic points of multiplicity $1$. 

In \cite{bruce84}, Bruce described a technique for studying generic properties of surfaces 
in the Euclidean space $\mathbb R^3$. 
Locally at each point ${\rm p}$ on a compact orientable surface in $\mathbb R^3$
is chosen an orthonormal frame $\{ {\bf u}, {\bf v}, {\bf n}\}$ with $ {\bf n}$ normal to $M$, 
so that $M$ is locally the graph of a function $z=f_{{\rm p}}(x,y)$, with $j^1f_{{\rm p}}(0,0)=0$. One can then construct the Monge-Taylor map $\theta: M\to \mathcal V\subset J^k(2,1)$, given by ${{\rm p}}\mapsto j^kf_{{\rm p}}(0,0)$, with $\mathcal V$ consisting of polynomials of 
 $  2\le $ degree $\le k$.
The idea is that the $k$-jet of $f_{{\rm p}}$ at the origin contains all the sought after geometric properties of $M$ at ${{\rm p}}$.

For a surface $M$ in $\mathbb R^3_1$, we can follow Bruce's approach in the spacelike or timelike regions of $M$ 
by choosing a Lorentzian orthonormal system of coordinates at each point in these regions. At a spacelike umbilic point and taking 
the surface as the graph of a function $z=f(x,y)$, Proposition 2 in \cite{bruce84} applies and we have 
the tangent space to the image of the Monge-Taylor map spanned by the vectors $v_1$ and $v_2$ with

{\footnotesize
$$
\begin{array}{rcl}
v_1&=& j^k(-f_{xx}(0,0)x -f_{xy}(0, 0)y + f_x(x, y) -f_{xx}(0,0)f_x(x,y)f(x,y)-f_{xy}(0,0)f_y(x,y)f(x,y)),\\
v_2&=&j^k(-f_{xy}(0, 0)x -f_{yy}(0, 0)y + f_y(x, y) -f_{xy}(0,0)f_x(x,y)f(x,y)-f_{yy}(0,0)f_y(x,y)f(x,y)).
\end{array}
$$
}

We have a similar result when the surface is a Lorentzian patch given as the graph of $y=f(x,z)$.

\begin{prop} \label{prop:dtheta}
Let $M$ be a Lorentzian patch in $\mathbb R^3_1$ given as the graph of a function $y=f(x,z)$ with $(x,y)$ near the origin, and 
let $\theta :M \to \mathcal V$ be the Monge-Taylor map. Then the tangent space at $\theta(0,0)$ 
to the image of $\theta$ is spanned by
{\footnotesize
	$$
\begin{array}{rcl}
	v_1&=&j^k(-f_{xx}(0,0)x -f_{xz}(0, 0)z + f_x(x, y) -f_{xx}(0,0)f_x(x,y)f(x,z)+f_{xz}(0,0)f_z(x,y)f(x,y)),\\
	v_2&=&j^k(-f_{xz}(0, 0)x -f_{zz}(0, 0)z + f_z(x, z) +f_{xz}(0,0)f_x(x,z)f(x,z)+f_{zz}(0,0)f_z(x,y)f(x,z)).
\end{array}
	$$
}
\end{prop}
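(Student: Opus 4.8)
The plan is to mimic directly the derivation of the Euclidean formulas for $v_1,v_2$ recalled just above from Proposition 2 in \cite{bruce84}, but for a Lorentzian graph $y=f(x,z)$ rather than a spacelike graph $z=f(x,y)$. The key point is that the Monge-Taylor construction is purely a matter of expressing the surface as a graph over its tangent plane and then reading off how the $k$-jet of the graphing function varies as the basepoint moves; the only way the Lorentzian signature enters is through the orthonormalisation of the adapted frame, which is why signs (and not the overall structure) change.

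First I would set up, for a point $(x_0,z_0)$ near the origin, the point ${\rm p}_0=(x_0,f(x_0,z_0),z_0)\in M$, together with the natural tangent vectors $\bx_x=(1,f_x,0)$ and $\bx_z=(0,f_z,1)$ evaluated at $(x_0,z_0)$, and the (Lorentzian) normal $\bN$ at ${\rm p}_0$. Since the patch is Lorentzian, $\langle\bN,\bN\rangle=1$; the metric restricted to the $x$-direction at the origin is $\langle\bx_x,\bx_x\rangle=1+f_x^2$ evaluated with the Minkowski product, so I must be careful which coordinate is the "timelike" one. With $f(0,0)=0$ and $j^1f(0,0)=0$, at the origin $\{\bx_x,\bx_z,\bN\}$ is already orthonormal; for a nearby basepoint I apply Gram–Schmidt with respect to $\langle\,,\rangle$ to obtain an adapted orthonormal frame $\{\bu,\bv,\bn\}$ depending on $(x_0,z_0)$. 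Then the new graphing function $\tilde f_{{\rm p}_0}$ is defined implicitly by writing a general point of $M$ near ${\rm p}_0$ as ${\rm p}_0 + \xi\bu+\eta\bv+\tilde f_{{\rm p}_0}(\xi,\eta)\bn$, and $\theta({\rm p}_0)=j^k\tilde f_{{\rm p}_0}(0,0)$. The tangent vectors $v_1,v_2$ to the image of $\theta$ are $\partial/\partial x_0$ and $\partial/\partial z_0$ of $j^k\tilde f_{{\rm p}_0}$, evaluated at $(x_0,z_0)=(0,0)$.

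The computation then proceeds by differentiating the implicit relation in $x_0$ and in $z_0$, using that at the origin $\bu=\bx_x$, $\bv=\bx_z$, $\bn=\bN$, and that $\partial\bu/\partial x_0$, $\partial\bv/\partial x_0$, etc., are expressible via the second derivatives $f_{xx}(0,0)$, $f_{xz}(0,0)$, $f_{zz}(0,0)$ — exactly as in Bruce's Euclidean case, except that each inner product that appears carries the Minkowski signs $(+,-,+)$ in the appropriate slot. Tracking these signs is what produces the sign discrepancies between the stated $v_1,v_2$ here and the Euclidean pair: the term $+f_{xz}(0,0)f_z f$ in $v_1$ and the term $+f_{xz}(0,0)f_x f$ in $v_2$ (with a $+$ where the Euclidean formula has a $-$) come precisely from the timelike slot in $\langle\,,\rangle$. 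I would organise the calculation so that one first records the general substitution dictionary ("$\bx_x\cdot\bx_x$ becomes $1$ minus or plus $f_x^2$", etc.), then plugs into Bruce's formulas.

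The main obstacle is purely bookkeeping: getting every Minkowski sign right and confirming that the two "parasitic" higher-order terms are $+f_{xz}(0,0)f_z(x,y)f(x,y)$ in $v_1$ and $+f_{xz}(0,0)f_x(x,z)f(x,z)$ and $+f_{zz}(0,0)f_z(x,y)f(x,z)$ in $v_2$, rather than their Euclidean-signed analogues; there is also a minor consistency check needed on the mixed use of $f(x,y)$ versus $f(x,z)$ in the stated formulas (the arguments should all be $(x,z)$, the domain variables of $f$). Beyond that, the argument is a routine—if lengthy—adaptation of the proof of Proposition 2 in \cite{bruce84}, and I would state it as such: reproduce that proof verbatim, replacing the Euclidean scalar product by $\langle\,,\rangle$, and collect the resulting expressions.
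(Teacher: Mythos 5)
Your proposal follows exactly the route the paper takes: the authors' proof simply states that the result follows the same steps as the detailed Euclidean computation of Bruce's Proposition 2 (carried out in Oliver's thesis), adapted to the Minkowski inner product, and omits the details. Your outline of that adaptation --- adapted orthonormal frame via Gram--Schmidt in $\langle\,,\rangle$, implicit definition of the new graphing function, differentiation in the basepoint, and sign bookkeeping from the timelike slot --- is the intended argument, and your remark that the arguments written as $(x,y)$ in the statement should all read $(x,z)$ correctly identifies typographical slips in the proposition as printed.
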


\begin{proof}
The details of the proof of Proposition 2 in \cite{bruce84} for surfaces in the Euclidean space 
were omitted in \cite{bruce84} and are given in \cite{joey}. 
The proof for when $M$ is a Lorentzian patch follows the same steps as those in \cite{joey} and we omit it. 
\end{proof}

At points on the $LD$ the normal to the surface is tangent to the surface, so we cannot choose 
an orthonormal frame as in \cite{bruce84}. As our study is local, we choose a fixed 
coordinate system  $\{ {\bf u}, {\bf v}, {\bf w}\}$ in a neighbourhood $W$ of a point ${\rm p}_0$ on the $LD$
with ${\bf w}$ transverse to $M$ at points in $W$. Then $M$ can be parametrised locally at each point ${\rm p}\in W$ by $(x,y)\mapsto \bx_{{\rm p}}(x,y)=(x,y,f_{{\rm p}}(x,y))$.
We define the Monge-Taylor map 
$\theta: W\to J^k(2,1)$ by 
$
\theta({{\rm p}})=j^kf_{{\rm p}}(0,0).
$

We write the $k$-jet, at the origin, of a function $g$ in the 
form $j^kg=\displaystyle{\sum_{s=1}^k\sum_{i=0}^s }a_{si}u^{s-i}v^i$ and identify it with the coefficients $(a_{si})$.

We call  {\it umbilic stratum} the set $\mathcal U$ in $\mathcal V$ (or $J^k(2,1)$ for lightlike umbilic points) such that ${\rm p}$ is an umbilic point 
if and only if $\theta({\rm p})\in \mathcal U$. 

\begin{theo}\label{theo:mult1_Trans_Jetspave}
An umbilic point on an analytic and regular surface in $\mathbb R^3_1$ has multiplicity $1$ if, and only if, the Monge-Taylor map is transverse to the umbilic stratum. 
\end{theo}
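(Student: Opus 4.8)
The plan is to translate the geometric transversality condition into the algebraic language of Proposition \ref{prop:mult_formula} and Remark \ref{rem:MultSmooth}. Work, say, in the case $G(q)\neq 0$ (the other cases are symmetric), so that by Proposition \ref{prop:mult_formula} and Lemma \ref{lem:2_cond_umb} we have $m_u({\rm p})=m(\mathfrak{a},\mathfrak{b})=\dim_{\mathbb C}\mathcal O_2/\langle\mathfrak{a},\mathfrak{b}\rangle$, where $\mathfrak{a},\mathfrak{b}$ are functions of the jet $\theta({\rm p})=j^kf_{{\rm p}}(0,0)$ and of the point. The key point is that $m(\mathfrak{a},\mathfrak{b})=1$ if and only if the map-germ $H=(\mathfrak{a},\mathfrak{b})\colon\mathbb R^2,0\to\mathbb R^2,0$ is a local diffeomorphism, i.e. $dH(q)$ is invertible. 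So the theorem reduces to showing that $dH(q)$ is invertible precisely when $d\theta(q)$ is transverse to the umbilic stratum $\mathcal U$.

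The main step is to set up the commuting diagram. The coefficients $\mathfrak{a},\mathfrak{b}$ at a point ${\rm p}'$ near ${\rm p}$ depend on the surface only through a finite jet of $f_{{\rm p}'}$ at $0$, so there is an analytic map $\Phi\colon\mathcal V\to\mathbb R^2$, $\Phi(j^kf)=(\mathfrak{a},\mathfrak{b})$ evaluated at the base point, with $H=\Phi\circ\theta$ near $q$ (here I use that the BDE coefficients at ${\rm p}'$ are computed in the fixed ambient frame from the parametrisation $\bx_{{\rm p}'}$, whose $k$-jet is exactly $\theta({\rm p}')$; this requires checking that passing from ${\rm p}$ to a nearby point ${\rm p}'$ and recentring the graph induces the Monge-Taylor reparametrisation, which is the content of the formulae for $v_1,v_2$ preceding Proposition \ref{prop:dtheta}). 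By definition of the umbilic stratum, $\mathcal U=\Phi^{-1}(0)$ locally near $\theta(q)$, and one checks that $\Phi$ is a submersion onto its image with $\mathcal U$ smooth of codimension $2$; indeed the image of $d\Phi(\theta(q))$ is all of $\mathbb R^2$ because the two lowest-order coefficients $a_{20},a_{11}$ (or their analogues) of the jet already move $\mathfrak{a}$ and $\mathfrak{b}$ independently — this is a short explicit computation using the expressions for $\mathfrak{a},\mathfrak{b}$ in terms of $E,F,G,\bar l,\bar m,\bar n$ and the Taylor coefficients of $f_{{\rm p}}$. Granting this, $\ker d\Phi(\theta(q))=T_{\theta(q)}\mathcal U$, and $dH(q)=d\Phi(\theta(q))\circ d\theta(q)$ is surjective (equivalently invertible, since source and target are $2$-dimensional) if and only if ${\rm Im}\,d\theta(q)+T_{\theta(q)}\mathcal U=T_{\theta(q)}\mathcal V$, which is exactly transversality of $\theta$ to $\mathcal U$ at $q$.

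To finish, I would invoke Remark \ref{rem:MultSmooth} in the analytic setting to ensure $m(\mathfrak{a},\mathfrak{b})$ equals $1$ iff $H$ is a diffeomorphism-germ (for an isolated umbilic, $m(\mathfrak{a},\mathfrak{b})$ is finite; it equals $1$ exactly when $\langle\mathfrak{a},\mathfrak{b}\rangle=\mathfrak m$, i.e. $H$ is a submersion, i.e. $dH(q)$ invertible), and then combine with the equivalence established above. The lightlike case uses $J^k(2,1)$ in place of $\mathcal V$ and the Monge-Taylor map attached to the fixed frame $\{{\bf u},{\bf v},{\bf w}\}$, but the argument is identical: one only needs that $\mathcal U=\Phi^{-1}(0)$ is smooth of codimension $2$ there as well, which again follows by exhibiting two jet coefficients that move $\mathfrak{a},\mathfrak{b}$ independently.

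I expect the main obstacle to be the bookkeeping in the previous paragraph's claim that $H=\Phi\circ\theta$ with $\Phi$ a submersion: one must be careful that the Monge-Taylor map for points off but near the $LD$ (or in the spacelike/timelike region) is compatible with recomputing $\mathfrak{a},\mathfrak{b}$ in the fixed frame, and that the codimension-$2$ smoothness of $\mathcal U$ holds uniformly — this is where the explicit low-order formulae for $\mathfrak{a},\mathfrak{b}$ in terms of the Taylor coefficients of $f_{{\rm p}}$ must actually be written down. Once $d\Phi$ is shown to be surjective with kernel $T\mathcal U$, the rest is formal linear algebra.
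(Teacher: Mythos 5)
Your strategy is sound and it is genuinely different in execution from the paper's. The paper proves the theorem by brute force in each case: it writes the umbilic stratum $\mathcal U$ explicitly (e.g.\ $a_{21}=0$, $a_{20}+a_{22}=0$ in the timelike case), uses Proposition \ref{prop:dtheta} to get the vectors $v_1,v_2$ spanning the image of $d\theta$, computes the transversality condition as an explicit polynomial inequality ($a_{32}^2-3a_{33}a_{31}-a_{31}^2+3a_{32}a_{30}\neq 0$), then separately computes the $1$-jets of $\mathfrak b,\mathfrak c$ and observes that $m(\mathfrak b,\mathfrak c)=1$ gives the \emph{same} inequality. You instead derive the equivalence a priori from the factorisation $dH(q)=d\Phi(\theta(q))\circ d\theta(q)$ together with $\ker d\Phi=T\mathcal U$, which is cleaner and treats all cases uniformly without ever producing the explicit condition. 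What your route buys is conceptual economy; what it costs is that the whole weight of the proof lands on the identity $H=\Phi\circ\theta$, which you flag but do not establish — and as stated it is actually false in the spacelike and timelike cases. There $\theta$ is built from a \emph{moving} adapted orthonormal frame at each nearby point ${\rm p}'$, not the fixed frame in which $\mathfrak a,\mathfrak b$ are computed, so $\Phi\circ\theta$ returns the BDE coefficients of a reparametrised surface. The repair is the transformation law in the proof of the invariance proposition: $(\mathfrak a',\mathfrak b',\mathfrak c')\circ h$ equals an invertible matrix times $(\mathfrak a,\mathfrak b,\mathfrak c)$, and since all three coefficients vanish at the umbilic, the differentials of the two maps at $q$ have the same rank; this is exactly the content that Proposition \ref{prop:dtheta} (the formulae for $v_1,v_2$) packages for the paper's computation. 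You should also verify, rather than assert, that $\mathcal U=\Phi^{-1}(0)$ is smooth of codimension $2$ near $\theta(q)$ in the lightlike case, where the defining equations are nonlinear in the $a_{ij}$ (this does hold, since $a_{10}=\pm1$, $a_{01}=0$ at the base point). With those two points supplied, your argument closes.
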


\begin{proof} 
We prove the result when ${\rm p}$ is a timelike umbilic point, the other two cases follow similarly.
We work in $\mathcal V\subset J^3(2,1)$ and take the surface as the graph of a function $y=f(x,z)$ with
$$
j^3 f= 
\frac{\kappa}{2} (x^2 - z^2)+a_{30} x^3+a_{31} x^2 z+a_{32} x z^2+a_{33} z^3
$$
and $(x,z)$ in a neighbourhood $U$ of the origin. 
The umbilic stratum in $\mathcal V$ is given by 
$$\mathcal U: 
\left\{ 
\begin{array}{c}
	a_{21} = 0,\\
a_{20}+a_{22} =0.
\end{array}
\right.
$$
and its tangent space at $\theta(0,0)$ is the intersection of the kernels of the 
	1-forms $\eta_1=da_{21}$ and $\eta_2=da_{20}+da_{22}$.
By Proposition \ref{prop:dtheta}, the tangent space to the image of $d\theta$ at $\theta(0,0)$ is spanned by
$v_1$ and $v_2$ with 
$$
\begin{array}{rcl}
	v_1&=&3a_{30}x^2+2a_{31}xz+a_{32}z^2+O(3),\\
	v_2&=&a_{31}x^2+2a_{32}xz+3a_{33}z^2+O(3),
\end{array}
$$
where $O(l)$ denotes a reminder of order $l$ in $(x,z)$. 
The map $\theta$ is transverse to $\mathcal U$ at $\theta(p)$ if, and only if, there are no 
scalars $\lambda$ and $\mu$ such that 
$\eta_1(\lambda v_1+\mu v_2)=\eta_2(\lambda v_1+\mu v_2)=0$ and $\lambda\ne 0 $ or $\mu\ne 0$, equivalently,
$
\eta_1(v_1).\eta_2(v_2)-\eta_1(v_1).\eta_2(v_2)\ne 0.
$
This occurs if, and only if,
$$
a_{32}^2 - 3 a_{33} a_{31} - a_{31}^2 + 3 a_{32} a_{30} \neq 0.
$$

By Proposition \ref{prop:mult_formula}, $
m_u({\rm p}_0) = \dim_\mathbb{C} {\mathcal O_2}/{\langle \mathfrak{b},\mathfrak{c} \rangle}$ 
as $E(x,y)=1+f_x^2(x,y)\ne 0$. We have 
$\mathfrak{b}(x,y)=(a_{32} +3 a_{30}) x+ (3 a_{33} + a_{31}) z+O(2)$, $\mathfrak{c}(x,y)=a_{31} x+ a_{32} z + O(2) $, so 
$m_u({\rm p}_0)=1$ if, and only if,
$
a_{32}^2 - 3 a_{33} a_{31} - a_{31}^2 + 3 a_{32} a_{30} \neq 0,
$
equivalently, $\theta$ is transverse to $\mathcal{U}$ 
at $\theta (0,0)$.
\end{proof}

\begin{theo}\label{theo:defm}
	Let $M$ be a regular analytic surface in $ \mathbb R^3_1$, and let ${\rm p}_0 \in M$ be an umbilic point. 
	Then there exists a $1$-parameter family of analytic surfaces $M_t$ with $M_0=M$ 
	such that all the umbilic points of $M_t$ near ${\rm p}_0$ have multiplicity $1$ for all $t\ne 0$ small enough.
\end{theo}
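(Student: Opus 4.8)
By Lemmas \ref{lem:2_cond_umb} and \ref{lem:OneCoeffNotZero} and Proposition \ref{prop:mult_formula}, the problem is local and we may assume, after permuting coordinates if necessary, that $G(q_0)\ne 0$, where ${\rm p}_0=\bx(q_0)$. Represent $M$ near ${\rm p}_0$ as the graph of an analytic function $f$ (namely $z=f(x,y)$ at a spacelike umbilic, $y=f(x,z)$ at a timelike one, and $y=f_{{\rm p}}(x,y)$ in a fixed frame at a lightlike one), with $q_0=0$. Shrinking the neighbourhood so that $G\ne 0$ on it, the umbilic points are precisely the common zeros of the analytic map $H=(\mathfrak a,\mathfrak b)\colon (U,0)\to(\mathbb R^2,0)$, and by Proposition \ref{prop:mult_formula} a zero $q'$ satisfies $m_u(\bx(q'))=1$ if and only if $\det dH(q')\ne 0$, i.e. $q'$ is a regular zero of $H$. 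Thus it suffices to exhibit a $1$-parameter analytic family $M_t$, with $M_0=M$, whose maps $H_t$ have, for all small $t\ne 0$, only regular zeros near $0$. (Equivalently, by Theorem \ref{theo:mult1_Trans_Jetspave}, the Monge--Taylor map of $M_t$ should be transverse to the umbilic stratum near ${\rm p}_0$; we argue with $H$ since this makes the characterisation of multiplicity $1$ most transparent.)

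The plan is a parametric transversality argument based on a rich family of deformations. Deform $M$ by replacing $f$ with $f_\lambda=f+\lambda_1 x^2+\lambda_2 xy+\lambda_3 y^2$, $\lambda\in\Lambda\subset\mathbb R^3$ a small ball, $f_0=f$. Adding a quadratic does not change the first fundamental form at the origin and shifts the second fundamental form coefficients $\bar l,\bar m,\bar n$ there independently; since $\mathfrak a=F\bar n-G\bar m$ and $\mathfrak b=E\bar n-G\bar l$ with $G(q_0)\ne 0$, a direct computation shows that the $\lambda$-derivative of $(q,\lambda)\mapsto H_\lambda(q)$ maps onto $\mathbb R^2$ at $(0,0)$, hence (surjectivity being open, after shrinking $U$ and $\Lambda$) at every point of $U\times\Lambda$. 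So the total map $\widehat H\colon U\times\Lambda\to\mathbb R^2$ is a submersion, in particular transverse to $\{0\}$, and by the parametric transversality theorem the bad set
$$
B=\{\lambda\in\Lambda\ :\ H_\lambda\ \text{has a non-regular zero in}\ \overline U\}
$$
has measure zero in $\Lambda$, where $\overline U$ is a relatively compact neighbourhood of $0$. For $\lambda\notin B$, every umbilic point of $M_\lambda$ in $\bx(\overline U)$ has multiplicity $1$.

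It remains to pass from ``almost every $\lambda$'' to a $1$-parameter family good for all small $t\ne 0$; this is where analyticity is essential. The set $Z=\{(q,\lambda)\in\overline U\times\Lambda:\ \mathfrak a_\lambda(q)=\mathfrak b_\lambda(q)=0,\ \det dH_\lambda(q)=0\}$ is analytic and $\pi_\Lambda$ restricts to a proper map on it, so $B=\pi_\Lambda(Z)$ is a closed subanalytic subset of $\Lambda$ of measure zero, hence of dimension $<3$. A generic line $\ell$ through $0$ is therefore not contained in $B$ and meets $B$ in a proper subanalytic, hence finite, subset of $\ell$. Choosing a direction $v=(v_1,v_2,v_3)$ realising such a line and setting $M_t:=M_{tv}$ (locally the graph of $f+t(v_1x^2+v_2xy+v_3y^2)$), we obtain an analytic family with $M_0=M$ for which all umbilic points of $M_t$ near ${\rm p}_0$ have multiplicity $1$ for $0<|t|<\delta$.

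The main obstacle is precisely this last step: parametric transversality alone gives only a generic (full-measure) set of good parameters, while the statement demands \emph{all} small $t\ne 0$; bridging this gap requires exploiting the subanalytic structure of the bad locus $B$ --- valid because $M$ is analytic --- to force $\ell\cap B$ to be locally finite rather than merely null. The other point requiring care is checking that quadratic deformations are rich enough, i.e. the surjectivity of $\partial_\lambda\widehat H$ at umbilic points, which is exactly where the normalisation $G(q_0)\ne 0$ (guaranteed by Lemmas \ref{lem:2_cond_umb}--\ref{lem:OneCoeffNotZero}) enters.
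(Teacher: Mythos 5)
Your proof is correct, and it takes a genuinely different route from the paper's. The paper works in the jet space: it writes down explicitly the stratum $\mathcal W$ of umbilics of multiplicity $>1$ (a codimension-$3$ variety whose singular part is diffeomorphic to a cone), shows that a six-parameter family of quadratic \emph{and cubic} perturbations makes the family of Monge--Taylor maps $\tilde\theta$ a submersion, and then chooses a path $\Gamma$ in parameter space so that $V_1\times\Gamma$ meets $\tilde\theta^{-1}(\mathcal W)$ only at the base point, using the cone structure. You instead work directly with $H=(\mathfrak a,\mathfrak b)$, note that multiplicity $1$ at a zero is exactly regularity of that zero, and get away with only the three quadratic parameters, since all you need is surjectivity of $\partial_\lambda\widehat H$ onto $\mathbb R^2$ (which the shifts of $\bar l,\bar m,\bar n$ give, using $G(q_0)\neq 0$) rather than transversality to a codimension-$3$ stratum involving the $3$-jet; Sard then yields a full-measure set of good parameters, and the subanalyticity of $B=\pi_\Lambda(Z)$ plays the role of the paper's cone structure in upgrading ``almost every $\lambda$'' to ``all small $t\neq 0$ along a line''. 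Your version is more economical (no explicit stratum equations, a smaller deformation space) and its finiteness mechanism is arguably cleaner than the paper's appeal to ``diffeomorphic to a cone''; the paper's version gives a more explicit geometric picture of the degenerate locus. One step you should tighten: a proper closed subanalytic subset of a line need not be finite (it may contain intervals), so ``$\ell\cap B$ is a proper subanalytic subset, hence finite'' is not literally a valid inference. The correct statement is that for almost every direction $v$ the set $\ell_v\cap B$ has vanishing one-dimensional measure (integrate $\mathbf{1}_B(rv)$ in polar coordinates over a ball, using that $B$ is Lebesgue-null and $|x|^{-2}$ is locally integrable in $\mathbb R^3$), hence is a zero-dimensional subanalytic set, hence locally finite, so that $0$ is isolated in $\ell_v\cap B$ --- which is exactly what the conclusion requires and is where both the nullity and the subanalyticity of $B$ enter.
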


\begin{proof}
We give the proof for the case when ${\rm p}_0$ is a timelike umbilic point as the expressions of the strata involved are simpler 
and refer to \cite{MarcoThesis} for the cases when ${\rm p}_0$ is a spacelike or a lightlike umbilic. 

Following the notation in the proof of Theorem \ref{theo:mult1_Trans_Jetspave}, 
the stratum of umbilic points of multiplicity $>1$ is given by 
	$$
\mathcal W:	\left\{\begin{array}{c}
a_{21}=0,\\
a_{20}+a_{22} =0,\\
a_{32}^2 - 3 a_{33} a_{31} - a_{31}^2 + 3 a_{32} a_{30} =0.
\end{array}
\right.
$$

The stratum $\mathcal W$ is a codimension $3$ variety in $\mathcal V$ and 
 is singular if, and only if, $a_{30}=a_{31}=a_{32}=a_{33}=0.$ 
At such points, and in $\mathcal V\subset J^3(2,1)$, it is diffeomorphic to a cone 
(it is diffeomorphic to the zero set of the map $h:\mathbb R^7\to \mathbb R^3$, 
given by $h((a_{ij}))=(a_{21},a_{20},- a_{30}^2- a_{31}^2+a_{32}^2 +a_{33}^2 )$).

We take the surface $M=M_0$  as the graph of $y=f_0(x,z)$, 
with  $f_0(x,z)=\frac{\kappa_0}{2}(x^2-z^2)+C_0(x,z)+O_4(x,z)$, $C_0(x,z)=a_{30}^0x^3+a_{31}^0x^2z+a_{32}^0xz^2+a_{33}^0z^3$
and $(x,z)$ in a neighbourhood $V_1$ of the origin 
in $\mathbb R^2$.

When $C_0$ is identically zero (the other case is similar), $\theta(0,0)$ is a singular point of $\mathcal W$ 
and the tangent space to the image of the Monge-Taylor map 
at $\theta(0,0)$ is contained in $T_{\theta(0,0)}\mathcal U$ (this is not the case when $C_0\ne 0$).
Consider the family of surfaces $y=f_0(x,z)+Q(x,z)+C(x,z)$
with the coefficients of $Q(x,z)=a_{20}x^2+a_{21}xz$ and of $C(x,z)=a_{30}x^3+a_{31}x^2z+a_{32}xz^2+a_{33}z^3$ varying in a small enough 
neighbourhood $V_2$ of the origin in $\mathbb R^2\times \mathbb R^4=\mathbb R^6$. 
This yields a family of Monge-Taylor maps $\tilde \theta: V_1\times V_2\to \mathcal V$ 
which is transverse to $\mathcal W$ at $\theta(0,0)$ ($\tilde \theta$ is a submersion). 
Then $\tilde \theta^{-1}(\mathcal W)$ is a germ of a codimension 3 variety of $ V_1\times V_2$
and is diffeomorphic to a cone, so we can choose a path
	$\Gamma: t\to \gamma(t)\subset V_2$ so that the image of  $V_1\times \Gamma$ by $\tilde \theta$  
avoids $\mathcal W$ except at $\theta(0,0)$. 
This means that all the umbilic points of the surface $M_t$ near ${\rm p}_0$, with $t\ne 0$, have multiplicity equal to $1$.

At a lightlike umbilic points we consider the Monge-Taylor map with a fixed frame and with the umbilic stratum given by
$$\mathcal U: 
\left\{ 
\begin{array}{rcl}
	(a_{01}^2-1)a_{21}-2a_{10} a_{01} a_{22} & = & 0,\\
	(a_{01}^2-1)a_{20}-(a_{10}^2-1)a_{22} & = & 0, \\
	(a_{10}^2-1)a_{21} - 2 a_{10} a_{01} a_{20} & = & 0.
\end{array}
\right.
$$

The above equations define a codimension 2 variety in the jet space $J^k(2,1)$ (one of the equations is redundant by Lemma \ref{lem:2_cond_umb}), for $k\ge 2$.
The equation of umbilics of multiplicity $>1$ is too lengthy to reproduce here (see \cite{MarcoThesis} for details). 
We prove in \cite{MarcoThesis} that $\mathcal W$ is diffeomorphic to  a cone and proceed as above.
\end{proof}

\begin{theo}\label{theo:mult_1_lpl}
{\rm (1)} Let ${\rm p}\in M$ be a spacelike or timelike umbilic point. Then $m_u({\rm p})=1$ if, and only if, 
 the discriminant of BDE $(\ref{eq:principalLD})$ of the lines of principal curvature  {\rm (}which is the 
 $LPL$ when $\rm p$ is a timelike umbilic{\rm )} has a Morse singularity.
	
{\rm (2)} Let ${\rm p}\in M$ be a lightlike  umbilic point. Then $m_u({\rm p})=1$ if, and only if, the $LD$ has a Morse singularity.
\end{theo}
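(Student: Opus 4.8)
\medskip
\noindent\textbf{Proof proposal.}
The plan is to reduce all cases, via Proposition~\ref{prop:mult_formula}, Lemma~\ref{lem:2_cond_umb} (and Remark~\ref{rem:MultSmooth} in the smooth setting), to the elementary fact that for germs $h,k\colon\mathbb R^2,q\to\mathbb R,0$ one has $m(h,k)=1$ if and only if $dh(q),dk(q)$ are linearly independent, and that this in turn holds if and only if the $2$-jet at $q$ of $hk$ --- more generally of $\alpha h^2+\beta hk+\gamma k^2$ with $\alpha,\beta,\gamma$ smooth and $(\beta^2-4\alpha\gamma)(q)\ne 0$ --- is a nondegenerate quadratic form. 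It then suffices, in each case, to express the relevant discriminant (or $F^2-EG$), up to a unit and up to such a combination, in terms of the pair of coefficients computing $m_u({\rm p})$.

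For the timelike subcase of (1) I would proceed as in the proof of Theorem~\ref{theo:mult&multUmb}: choose coordinates with lightlike coordinate curves, so $E\equiv G\equiv 0$ and $F\ne 0$; then $(\omega_P)$ becomes $\bar n\,dv^2-\bar l\,du^2=0$, its discriminant (the $LPL$) is $\delta=4\bar l\bar n$, and $\mathfrak{a}=F\bar n$, $\mathfrak{c}=-F\bar l$ give $m_u({\rm p})=m(\bar l,\bar n)$ since $F$ is a unit at $q$. Now $j^2\delta=4\,j^1\bar l\cdot j^1\bar n$, which is nondegenerate exactly when $j^1\bar l,j^1\bar n$ are independent, i.e. when $m(\bar l,\bar n)=1$. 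For the spacelike subcase, ${\rm p}$ lies in the Riemannian region, so $G(q)>0$ and $F(q)^2-E(q)G(q)<0$; using $G\mathfrak{c}=F\mathfrak{b}-E\mathfrak{a}$ from Lemma~\ref{lem:2_cond_umb}, the discriminant equals $\delta=\mathfrak{b}^2-\frac{4F}{G}\mathfrak{a}\mathfrak{b}+\frac{4E}{G}\mathfrak{a}^2$, a binary quadratic form in $\mathfrak{a},\mathfrak{b}$ whose discriminant at $q$ is $\frac{16}{G(q)^2}(F(q)^2-E(q)G(q))<0$. Hence $\delta$ has a Morse singularity if and only if $j^1\mathfrak{a},j^1\mathfrak{b}$ are independent, i.e. $m(\mathfrak{a},\mathfrak{b})=m_u({\rm p})=1$.

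For (2), note first that both $m_u({\rm p})$ and the singularity type of the $LD$ are unchanged by the choice of fixed ambient frame; since $T_{\rm p}M$ is tangent to the light cone it contains a unique lightlike line $\ell$, and every direction of $T_{\rm p}M$ transverse to $\ell$ is spacelike, so I would take as the fixed frame a Lorentzian orthonormal frame $\{E_1,E_2,E_3\}$ with $E_3$ timelike, $\ell=\mathbb R(E_1+E_3)$ and $E_2\in T_{\rm p}M$. Then $M$ is the graph $z=f(u,v)$ with $f_u(q)=1$, $f_v(q)=0$, so $E(q)=F(q)=0$, $G(q)=1$, $m_u({\rm p})=m(\mathfrak{a},\mathfrak{b})$, and $\bar l=f_{uu}$, $\bar m=f_{uv}$, $\bar n=f_{vv}$; moreover $\mathfrak{a}(q)=\mathfrak{b}(q)=0$ forces $f_{uu}(q)=f_{uv}(q)=0$, so $j^2f=u+\frac12 f_{vv}(q)v^2$. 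Since the $LD$ is singular at ${\rm p}$, the function $F^2-EG=f_u^2+f_v^2-1$ vanishes to order $2$ at $q$. Expanding $\mathfrak{a}$, $\mathfrak{b}$ and $f_u^2+f_v^2-1$ to third order in $f$, one checks that the Jacobian determinant of $(\mathfrak{a},\mathfrak{b})$ at $q$ and the Hessian determinant of $f_u^2+f_v^2-1$ at $q$ are both nonzero multiples of one and the same polynomial in $f_{vv}(q)$ and the cubic coefficients of $f$. Therefore $j^1\mathfrak{a},j^1\mathfrak{b}$ are independent if and only if this Hessian is nondegenerate, that is, $m_u({\rm p})=1$ if and only if the $LD$ has a Morse singularity. (Alternatively, one could argue through Theorem~\ref{theo:mult1_Trans_Jetspave}: transversality of the Monge--Taylor map to the umbilic stratum reduces to exactly this cubic-jet condition, which explains why in this case the relevant locus is the singular set of the $LD$.)

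The main obstacle is the lightlike case. One has to check that the adapted frame is admissible --- i.e. that $E_3$ stays transverse to $M$ near ${\rm p}$, which follows from $E_3\notin T_{\rm p}M$ --- and then carry the Taylor expansions of $\mathfrak{a}$, $\mathfrak{b}$ and $f_u^2+f_v^2-1$ out to third order in $f$; the coincidence of the Jacobian condition for $m(\mathfrak{a},\mathfrak{b})=1$ with the Hessian condition for the $LD$ is not visible at second order, unlike the spacelike and timelike subcases, where a single quadratic-form identity settles the matter.
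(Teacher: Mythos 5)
Your argument is correct, and for part (1) it takes a genuinely different route from the paper. The paper classifies the cubic part of $f$ into normal forms (Propositions \ref{prop:cubic}, \ref{prop:1jet_bde_timelike_umb}, \ref{prop:spacelikeumbPart} and (\ref{eq:1jetBDEspace})), computes on each normal form an explicit polynomial condition in $(s,t)$ (or the $a_{ij}$) for $m_u({\rm p})=1$, and observes that it coincides with the separately computed condition for the discriminant (resp.\ the $LD$) to be Morse. You instead use the identity $G\mathfrak{c}=F\mathfrak{b}-E\mathfrak{a}$ from Lemma \ref{lem:2_cond_umb} to write the discriminant as $\mathfrak{b}^2-\tfrac{4F}{G}\mathfrak{a}\mathfrak{b}+\tfrac{4E}{G}\mathfrak{a}^2$, a quadratic form in $(\mathfrak{a},\mathfrak{b})$ whose own discriminant $\tfrac{16}{G^2}(F^2-EG)$ is nonzero off the $LD$, and combine this with the elementary fact that $m(h,k)=1$ iff $dh,dk$ are independent iff such a combination has nondegenerate $2$-jet. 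This is cleaner and uniform: it dispenses with the normal-form case analysis, treats spacelike and timelike points together, and even explains structurally why the criterion must switch to the $LD$ at lightlike umbilics (there $F^2-EG=0$, so the $2$-jet of the BDE-discriminant is forced to be a degenerate perfect square, consistent with the $A_3$ of Proposition \ref{prop:Lightlikeumbilic}(ii)). For part (2) your route is essentially the paper's: both reduce to a Taylor computation in the adapted parametrisation (\ref{paramLightUmb}); you organise it as ``Jacobian of $(\mathfrak{a},\mathfrak{b})$ versus Hessian of $f_u^2+f_v^2-1$'' where the paper matches the conditions of Proposition \ref{prop:Lightlikeumbilic}(i) and (iii). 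The one step you assert rather than carry out is precisely this identity, but it does hold: both determinants are nonzero constant multiples of $6a_{22}^2a_{30}\pm 3a_{30}a_{32}\mp a_{31}^2$, the quantity appearing in Proposition \ref{prop:Lightlikeumbilic}(iii), so the gap is only one of bookkeeping, not of substance.
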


\begin{proof} We take $M$ at ${\rm p}$ as the graph of a function $f$. 
	If the cubic part of $f$ is identically zero, then $j^1\omega_P=0$ which implies $m_u({\rm p})>1$.	
	We can suppose then that the cubic part of $f$ is not identically zero and take $f$ as in  \S\ref{sec:appendix}.
	
(1) When ${\rm p}$ is a spacelike umbilic point, and using (\ref{eq:1jetBDEspace}) in \S\ref{sec:appendix}, we have $m_u({\rm p})=m(\mathfrak{a},\mathfrak{b})=1$ if, and only if, $s^2+t^2\ne 9$.
The circle $s^2+t^2= 9$ is where the discriminant of the BDE of the lines of principal curvature 
has a singularity more degenerate than Morse (Proposition \ref{prop:spacelikeumbPart}; it is also where 
$\phi$ and $\alpha$ have a common root).

When ${\rm p}$ is a timelike umbilic point, using the 1-jets in Proposition \ref{prop:1jet_bde_timelike_umb}in \S\ref{sec:appendix}, we have 
for case (i) (resp. case (iii)): $m_u({\rm p})=m(\mathfrak{a},\mathfrak{b})=1$ if, and only if, $s^2-t^2-3t\ne 0$ (resp. $(t-1)(1+t\pm s)\ne 0$). 
The result follows by Proposition \ref{prop:1jet_bde_timelike_umb}.

(3)	When ${\rm p}$ is a lightlike umbilic point, we parametrise $M$ locally at ${\rm p}$ as in (\ref{paramLightUmb}) in \S\ref{sec:appendix}.
Using the 1-jet in Proposition \ref{prop:Lightlikeumbilic}(i), 
we have $m_u({\rm p})=m(\mathfrak{a},\mathfrak{b})=1$ if, and only if, $6a_{22}^2a_{30}\pm 3a_{30}a_{32}\mp a_{31}^2\ne 0$, which is precisely the 
condition for the $LD$ to have a Morse singularity (Proposition \ref{prop:Lightlikeumbilic}(iii); observe that in this case the discriminant of the BDE of the lines of principal curvature has an $A_3$-singularity at a generic lightlike umbilic). 
\end{proof}

The condition in Theorem \ref{theo:mult_1_lpl} are intimately related to the singularities of the distance squared function on $M$.
The family of distance squared functions $d^2: M \times \mathbb R^3_1 \to \mathbb R$ on $M$ is given by
$
d^2({\rm p},{\bf v}) = \langle {\rm p}-{\bf v}, {\rm p}-{\bf v} \rangle.
$
For ${\bf v} \in \R^3_1$,  we denote by $d_{\bf v}^2: M \to \mathbb R$ the distance squared function from 
$\bf v$, given by $d_{\bf v}^2({\rm p}) = d^2({\rm p},{\bf v})$. The singularities of $d_{\bf v}^2$ measure the contact of $M$ 
with pseudo-spheres of centre ${\bf v}$.
A point ${\rm p}\in M$ is an umbilic point if, and only if, $d_{\bf v}^2$ 
has a singularity of corank 2, that is $j^2 d_{\bf v}^2=0$ at ${\rm p}$ (\cite{causticsR31}). 
At generic umbilics, the singularity is of type $D_4^{\pm}$.

\begin{theo}\label{teo:mult_versal}
Let ${\rm p}$ be an umbilic point on a regular analytic surface $M\subset  \mathbb R^3_1$ 
and suppose that $d_{{\bf v}_0}^2$ has a  $D_4^{\pm}$-singularity at ${\rm p}$ for some ${\bf v}_0 \in \mathbb R^3_1$. Then $m_u({\rm p}) = 1$ if, and only if, the family of distance squared function $d^2$ is an
$\mathcal{R}^+$-versal deformation of the singularity  of $d_{{\bf v}_0}^2$ at ${\rm p}$.
\end{theo}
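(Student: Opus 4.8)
The plan is to translate $\mathcal{R}^{+}$-versality of $d^2$ into an explicit condition on the $3$-jet of $f$ and then recognise that condition as the one already isolated for $m_u({\rm p})=1$ in Theorems \ref{theo:mult1_Trans_Jetspave} and \ref{theo:mult_1_lpl}. As in the proofs of Theorems \ref{theo:defm} and \ref{theo:mult_1_lpl}, I would carry out the details only when ${\rm p}$ is a timelike umbilic point and refer to \cite{MarcoThesis} for the spacelike and lightlike cases, which follow the same scheme. Write $M$ at ${\rm p}$ as the graph of $y=f(x,z)$ in a Lorentzian orthonormal frame $\{{\bf e}_1,{\bf e}_2,{\bf n}\}$, with $j^3f=\frac{\kappa}{2}(x^2-z^2)+c(x,z)$ and $c(x,z)=a_{30}x^3+a_{31}x^2z+a_{32}xz^2+a_{33}z^3$, exactly as in the proof of Theorem \ref{theo:mult1_Trans_Jetspave}. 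A direct computation shows that the Hessian of $d_{\bf v}^2$ at ${\rm p}$ vanishes precisely when ${\bf v}$ is the centre of curvature ${\bf v}_0={\rm p}+\frac{1}{\kappa}{\bf n}$ (so the hypothesis of the theorem forces $\kappa\neq 0$), and that $j^3d_{{\bf v}_0}^2=\frac{1}{\kappa^2}-\frac{2}{\kappa}c(x,z)$. Thus $d_{{\bf v}_0}^2$ has a $D_4^{\pm}$-singularity at ${\rm p}$ if, and only if, the cubic form $c$ is non-degenerate; I take this as a standing assumption.

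Next I would invoke the infinitesimal criterion for versality (see \cite{wall}): since ${\bf v}$ ranges over the $3$-dimensional space $\mathbb R^3_1$ and the $\mathcal{R}^{+}$-codimension of $D_4^{\pm}$ equals $3$, the deformation $d^2:M\times\mathbb R^3_1\to\mathbb R$ is an $\mathcal{R}^{+}$-versal deformation of $d_{{\bf v}_0}^2$ at ${\rm p}$ if, and only if,
$$
\mathcal{O}_2=J(d_{{\bf v}_0}^2)+\mathbb{R}\{1,\ \partial d^2/\partial v_1,\ \partial d^2/\partial v_2,\ \partial d^2/\partial v_3\},
$$
where $J(d_{{\bf v}_0}^2)$ is the Jacobian ideal, $v_1,v_2,v_3$ are the coordinates of ${\bf v}$ in the chosen frame, and the partial derivatives are evaluated at ${\bf v}_0$. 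Since $c$ is non-degenerate, $J(d_{{\bf v}_0}^2)$ contains $\mathfrak{m}^3$ and its part of degree $2$ equals $\langle c_x,c_z\rangle$, so the quotient $\mathcal{O}_2/(J(d_{{\bf v}_0}^2)+\mathbb{R}\cdot 1)$ is $3$-dimensional, generated by the classes of $x$, $z$ and of any quadratic not lying in $\langle c_x,c_z\rangle$.

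Differentiating $d^2({\rm p},{\bf v})=\langle {\rm p}-{\bf v},{\rm p}-{\bf v}\rangle$ at ${\bf v}_0$, and using that ${\bf e}_1,{\bf e}_2$ are tangent and orthogonal to ${\bf n}$ while the third frame vector is ${\bf n}$, one finds $\partial d^2/\partial v_1=-2x$, $\partial d^2/\partial v_2=2z$ and $\partial d^2/\partial v_3=\frac{2}{\kappa}-2f=\frac{2}{\kappa}-\kappa(x^2-z^2)-2c(x,z)+O_4$. Reducing modulo $J(d_{{\bf v}_0}^2)+\mathbb{R}\cdot 1$ (using $\mathfrak{m}^3\subset J(d_{{\bf v}_0}^2)$ and Euler's identity $3c=xc_x+zc_z$), the three initial speeds become the classes of $x$, $z$ and $-\kappa(x^2-z^2)$. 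Hence the criterion holds if, and only if, $x^2-z^2\notin\langle c_x,c_z\rangle$, and computing the $3\times 3$ determinant formed by the coefficients of $c_x$, $c_z$ and $x^2-z^2$ this becomes
$$
a_{32}^2-3a_{33}a_{31}-a_{31}^2+3a_{32}a_{30}\neq 0,
$$
which is precisely the inequality characterising $m_u({\rm p})=1$ in the proof of Theorem \ref{theo:mult1_Trans_Jetspave} (equivalently in Theorem \ref{theo:mult_1_lpl}). This settles the timelike case.

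The content of the argument is that the single quadratic generator of $\mathcal{O}_2/(J(d_{{\bf v}_0}^2)+\mathbb{R}\cdot 1)$, beyond the two evident linear ones, is supplied exactly by the derivative of $d^2$ in the normal direction, whose quadratic part is the Hessian $\frac{\kappa}{2}(x^2-z^2)$ carrying the umbilic curvature; versality then means that this Hessian escapes the Jacobian ideal of $c$, which is the Morse condition on the discriminant of the BDE of the lines of principal curvature. I expect the main difficulty to lie in the lightlike case rather than the timelike one: there the adapted Lorentzian frame is unavailable, the induced metric on $M$ is degenerate, and both the normal form of $f$ and the location of the relevant centre ${\bf v}_0$ are more delicate, so the same reduction must be run with the fixed frame of \S\ref{sec:mult} and matched against the Morse condition on the $LD$ from Theorem \ref{theo:mult_1_lpl}(2).
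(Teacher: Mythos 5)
Your proof is correct and follows essentially the same route as the paper's: both apply the infinitesimal $\mathcal R^+$-versality criterion at the $3$-jet level and identify the resulting nondegeneracy condition with the multiplicity-one condition of Theorems \ref{theo:mult1_Trans_Jetspave} and \ref{theo:mult_1_lpl}; working with the general cubic rather than the normal forms of Proposition \ref{prop:cubic} is a mild streamlining, and your condition $a_{32}^2-3a_{33}a_{31}-a_{31}^2+3a_{32}a_{30}\neq 0$ is the correct one for the timelike case. The only caveat is that the lightlike case, which the paper does carry out (fixed frame, ${\bf v}_0=(-\frac{1}{2a_{22}},0,-\frac{1}{2a_{22}})$, versality condition $6a_{22}^2a_{30}\pm 3a_{30}a_{32}\mp a_{31}^2\neq 0$ matched against the Morse condition on the $LD$), is left by you only as a sketch; your description of what is needed there is accurate, but that case is part of the statement and should be completed.
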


\begin{proof}
We take the surface as the graph of a function $f$ with ${\rm p}$ the origin (see \S\ref{ssec:config}).  
At a timelike or spacelike umbilic point 
we have 
$j^3d_{{\bf v}_0}^2\sim_{ \mathcal R} C(x,y)$, with $C$ the cubic part of $j^3f$. 
Then  the singularity of $d_{{\bf v}_0}^2$ at the origin, with ${\bf v}_0=(0,\kappa/2,0)$, is a $D_4^{\pm}$ if, and only if $C$ has no repeated roots 
(in particular, $C$ cannot be identically zero).

Suppose that ${\rm p}$ is a spacelike umbilic point and take $C=\Re(z^3+\beta z^2\bar{z})$ as in \S\ref{ssec:config}. 
The origin is a $D_4^{\pm}$-singularity of $d_{{\bf v}_0}^2$ means that $C$ is not on the curve 
$$
-27+18 s^2 -8s^3 +s^4 +18t^2 +24st^2 +2s^2 t^2 +t^4 = 0
$$ 
(the inner hypocycloid in Figure \ref{fig:PartitionA_1^+}). 
The family of distance squared function 
is an $\mathcal{R}^+$-versal deformation of the $D_4^{\pm}$-singularity of $d_{{\bf v}_0}^2$ at the origin 
if, and only if, $\beta$ is not on the circle $s^2+t^2=9$ (\cite{porteous}). The result then follows by 
Theorem \ref{theo:mult_1_lpl}(1) and Proposition \ref{prop:spacelikeumbPart}.

If ${\rm p}$ is a timelike umbilic point, we take $C$ as in Proposition \ref{prop:cubic}(i) or (iii).
For $C(x,z)=x(x^2+sxz+tz^2)$, we need $t(s^2-4t)\ne 0$ for $d^2_{{\bf v}_0}$ to have a $D_4^{\pm }$-singularity. 
We write ${\bf v}=(a,b+\kappa/2,c)$ for ${\bf v}$ near ${\bf v}_0$ and denote by 
$\dot{d}^2_a(x,z)=\frac{\partial d^2}{\partial a}(x,z,{\bf v}_0)$ and similarly for $\dot{d}^2_b$ and $\dot{d}^2_c$. Then  
the family of distance squared function is an $\mathcal R^+$-versal deformation of the $D_4^{\pm }$-singularity of $d^2_{{\bf v}_0}$ if, 
and only if, 
$$
\mathcal O_2
\langle \frac{\partial d^2_{{\bf v}_0}}{\partial x}, \frac{\partial d^2_{{\bf v}_0}}{\partial z} \rangle + \mathbb R.\{1, \dot{d}^2_a,\dot{d}^2_b,\dot{d}^2_c\} =\mathcal O_2.
$$ 

As a $D_4^{\pm }$-singularity is 3-$\mathcal R$-determined, it is enough to show that the above equality holds 
at the 3-jet level. 
That occurs if, and only if, $s^2-t^2-3t\ne 0$.
The result then follows by Theorem \ref{theo:mult_1_lpl}(1) and 
 Proposition \ref{prop:1jet_bde_timelike_umb}.

When $C$ is as in Proposition \ref{prop:cubic}(iii), similar calculations to the above show that 
we have an $\mathcal R^+$-versal family if, and only if, $(t-1)(1+t\mp s)\ne 0$. The result then follows by Theorem \ref{theo:mult_1_lpl}(1) and 
Proposition \ref{prop:1jet_bde_timelike_umb}.

At a lightlike umbilic point and with the parametrisation of the surface as in (\ref{paramLightUmb}),
 $d_{{\bf v}_0}^2$ has a singularity at the origin 
when ${\bf v}_0=(-\frac{1}{2a_{22}},0,-\frac{1}{2a_{22}})$. Then 
$$j^3d_{{\bf v}_0}^2(x,y,c)=
-\frac1{a_{22}}
(a_{30}x^3+a_{31}x^2y+(2a_{22}^2+a_{32})xy^2+a_{33}y^3)
$$
and the singularity is of type $D_4^{\pm}$ when the above cubic form has no repeated roots, that is, 
$$
\begin{array}{l}
32a_{22}^6a_{30}+4(12a_{30}a_{32}-a_{31}^2)a_{22}^4-4(9a_{30}a_{31}a_{33}-6a_{30}a_{32}^2+a_{31}^2a_{32})a_{22}^2\\
+27a_{30}^2a_{33}^2-18a_{30}a_{31}a_{32}a_{33}+4a_{30}a_{32}^3+4a_{31}^3a_{33}-a_{31}^2a_{32}^2\ne 0.
\end{array}
$$

A calculation similar to that for a timelike umbilic point shows that 
the family $d^2$ is an versal $\mathcal R^+$-versal deformation of the $D^{\pm}_4$-singularity of  $d_{{\bf v}_0}^2$ 
if, and only if, 
$6a_{22}^2a_{30}\pm 3a_{30}a_{32}\mp a_{31}^2\ne 0$. The result follows by Theorem \ref{theo:mult_1_lpl}(2) and 
Proposition \ref{prop:Lightlikeumbilic}.
\end{proof}

\begin{rem}\label{rem:Mult1}
{\rm The multiplicity of an umbilic point ${\rm p}$ can still be equal to $1$ even when the singularity of   
$d_{{\bf v}_0}^2$ is more degenerate than $D_4^{\pm}$. At a spacelike umbilic, this is the case for $C$ on the 
inner hypocycloid in Figure \ref{fig:PartitionA_1^+} and away from the points on the circle $s^2+t^2=9$.
}
\end{rem}

\subsection{Umbilic points of multiplicity greater than $1$}

We relate here the multiplicity of umbilic points to the singularities of the $LD$ and of $LPL$.

\begin{theo}\label{theo:LDMilnor}
Let ${\rm p}\in M$ be a lightlike umbilic point. 
If the $LD$ has a singularity which $\mathcal R$-equivalent to a quasi-homogeneous singularity, then $m_u({\rm p})$ is equal to the 
Milnor number of the singularity of the $LD$ at ${\rm p}$.
\end{theo}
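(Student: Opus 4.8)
The plan is to reduce the statement to a standard fact from singularity theory: for a quasi-homogeneous hypersurface singularity $g:\mathbb C^2,0\to\mathbb C,0$, the Milnor number equals the Tjurina number, i.e. $\mu(g)=\dim_{\mathbb C}\mathcal O_2/\langle g_u,g_v\rangle=\dim_{\mathbb C}\mathcal O_2/\langle g,g_u,g_v\rangle$. So if I can identify the $LD$ with (the zero set of) such a $g$, and identify the ideal $\langle \mathfrak a,\mathfrak b\rangle$ (or $\langle\mathfrak b,\mathfrak c\rangle$, whichever applies) appearing in $m_u({\rm p})$ via Proposition \ref{prop:mult_formula} with the Jacobian ideal $\langle g_u,g_v\rangle$ up to a unit, the theorem follows.

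First I would set up coordinates at the lightlike umbilic ${\rm p}$ using the parametrisation (\ref{paramLightUmb}) of \S\ref{sec:appendix}, so that the $LD$ is the zero set $\{F^2-EG=0\}$ in $U$; write $\delta_{LD}=F^2-EG$ for its defining function. By Lemma \ref{lem:OneCoeffNotZero} one of $E,G$ is nonzero at ${\rm p}$; say $G(q)\neq 0$, so by Proposition \ref{prop:mult_formula} $m_u({\rm p})=m(\mathfrak a,\mathfrak b)=\dim_{\mathbb C}\mathcal O_2/\langle\mathfrak a,\mathfrak b\rangle$. The key algebraic step is to show $\langle\mathfrak a,\mathfrak b\rangle=\langle (\delta_{LD})_u,(\delta_{LD})_v\rangle$ as ideals in $\mathcal O_2$, at least up to multiplication by units — this is where the geometry enters. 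Recall that at a lightlike umbilic the $LD$ is singular (as noted in \S\ref{sec:prel}, citing \cite{CaratheodoryR31}), and that $\mathfrak a,\mathfrak b$ are the coefficients of BDE (\ref{eq:principalLD}); I expect a direct computation, using $\bar l,\bar m,\bar n$ and $E,F,G$, showing that the gradient of $F^2-EG$ is, modulo units and modulo the coordinate choice, a linear combination of $(\mathfrak a,\mathfrak b)$ and conversely — essentially because the discriminant of the lightlike-direction equation (\ref{eq:LightBDE}) is $F^2-EG$ and its singular locus is cut out by the same two conditions that force a lightlike umbilic. Once this identification is in hand, $m_u({\rm p})=\dim_{\mathbb C}\mathcal O_2/\langle (\delta_{LD})_u,(\delta_{LD})_v\rangle=\mu(\delta_{LD})$, and the quasi-homogeneity hypothesis lets me equate the Milnor and Tjurina numbers so that the answer depends only on the $\mathcal R$-class of the $LD$ singularity, which is what is claimed.

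The main obstacle will be the ideal identification $\langle\mathfrak a,\mathfrak b\rangle = \langle (\delta_{LD})_u,(\delta_{LD})_v\rangle$: a priori these two ideals need not coincide (one is the Jacobian ideal of the discriminant, the other the ideal generated by two of the three BDE coefficients), and showing they agree up to units requires exploiting both the regularity of $M$ (Lemma \ref{lem:OneCoeffNotZero}) and the special structure of the principal BDE at a lightlike umbilic. I would handle this by working with the 1-jets and using the explicit formulas from Proposition \ref{prop:Lightlikeumbilic} in the Appendix — the same formulas already used in the proof of Theorem \ref{theo:mult_1_lpl}(2), where the multiplicity-$1$ condition $6a_{22}^2a_{30}\pm 3a_{30}a_{32}\mp a_{31}^2\neq 0$ is shown to be exactly the Morse condition on the $LD$; that coincidence for $\mu=1$ is the baby case of the identification I need, and I would verify it persists ideal-theoretically in general, or at least that $\dim_{\mathbb C}\mathcal O_2/\langle\mathfrak a,\mathfrak b\rangle=\dim_{\mathbb C}\mathcal O_2/\langle (\delta_{LD})_u,(\delta_{LD})_v\rangle$ as codimensions even if the ideals differ. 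A secondary, routine point is to justify that the quasi-homogeneity of the $LD$ singularity — an $\mathcal R$-invariant condition — indeed guarantees $\mu=\tau$, for which I would cite the classical Milnor–Orlik / Saito result, and to note that finiteness of $m_u({\rm p})$ forces $\mu(\delta_{LD})<\infty$ so the Milnor number is well defined.
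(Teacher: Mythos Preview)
Your overall strategy---reduce $m_u({\rm p})$ to the Milnor number of the $LD$ via an ideal computation---matches the paper, but the key algebraic step has a genuine gap. You propose to show $\langle\mathfrak a,\mathfrak b\rangle=\langle \delta_u,\delta_v\rangle$ (writing $\delta$ for the defining function of the $LD$), perhaps up to units, and then invoke $\mu=\tau$. That ideal identity is \emph{not} true in general, and your plan to verify it via the 1-jets of Proposition~\ref{prop:Lightlikeumbilic} only checks equality of linear parts, which recovers the multiplicity~$1$ case (Theorem~\ref{theo:mult_1_lpl}(2)) but says nothing for higher multiplicity.

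What the paper actually does is different at exactly this point. Taking the graph parametrisation $z=f(x,y)$ so that $\delta=f_x^2+f_y^2-1$, a direct manipulation of the full expressions for $\mathfrak a$ and $\mathfrak b$ (not just their 1-jets) yields
\[
m(\mathfrak a,\mathfrak b)=m\bigl(\delta_x+f_1\delta,\ \delta_y+f_2\delta\bigr)
\]
for certain explicit germs $f_1,f_2$ built from second partials of $f$. This intermediate ideal is neither the Jacobian ideal nor the Tjurina ideal in general. The quasi-homogeneity hypothesis is then used, not as the abstract statement $\mu=\tau$, but concretely via the Euler relation $P=r_1uP_u+r_2vP_v$ (after composing with a diffeomorphism $h$ so that $P=\delta\circ h$ is quasi-homogeneous): substituting this for $P$ in the two generators and inverting a $2\times2$ matrix of germs whose determinant is a unit reduces $\langle P_u+f_5P,\ P_v+f_6P\rangle$ to $\langle P_u,P_v\rangle$, giving $m_u({\rm p})=\mu(P)=\mu(\delta)$. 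So the quasi-homogeneity enters precisely to kill the extra $f_i\delta$ terms in the generators; this is where your argument needs to be repaired, rather than at the $\mu=\tau$ level (which, via Saito, is of course the same underlying fact, but you are not invoking it in a form that applies to the two-generator ideal you actually have).
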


\begin{proof}
We take $M$ locally at ${\rm p}$ as the graph of a function $z=f(x,y)$ with $(x,y)$ in some neighbourhood of the origin. Then the $LD$ is given by $\delta = 0$, where
$$
\delta(x,y) = f_x(x,y)^2+f_y(x,y)^2-1.
$$

We have $E(0,0)=1-f^2_x(0,0)\ne 0$ or 
$G(0,0)=1-f^2_y(0,0)\ne 0$. Suppose, without loss of generality, that $f^2_y(0,0) \neq 1$. 
We have $\mathfrak{a}=f_{xy} (1 - f_{y}^2) + f_{x} f_{y} f_{yy}$ and $\mathfrak{b}=f_{xx} (1 - f_{y}^2) - f_{yy} (1 - f_{x}^2)$.
It follows from Proposition \ref{prop:mult_formula} and from the fact that $f_x(0,0)\ne 0$ 
(otherwise $f^2_y(0,0)=1$ as the origin is on the $LD$) that
$$
\begin{array}{rcl}
	m_u({\rm p}) & = & m(\mathfrak{a},\mathfrak{b})\\
	& = & m(f_{xy} (f_{x}^2 - \delta) + f_{x} f_{y} f_{yy}, f_{xx} (f_{x}^2 - \delta) - f_{yy} (f_{y}^2 - \delta)) \\
	& = & m(\frac12{f_x} \delta_y - f_{xy} \delta, \frac12{f_x} \delta_x - \frac12{f_y} \delta_y + (f_{yy} - f_{xx}) \delta)\\
	& = & m (\delta_y - \frac{2 f_{xy}\delta}{f_x} , \frac12{f_x} \delta_x - \frac12{f_y} \delta_y + (f_{yy} - f_{xx}) \delta)\\
	& = & m(\delta_y - \frac{2 f_{xy}\delta}{f_x} , \frac12{f_x} \delta_x + (f_{yy} - \frac{f_{xy} f_{y}}{f_{x}} - f_{xx}) \delta) \\
	& = & m (\delta_y - \frac{2 f_{xy}\delta}{f_x}, \delta_x - \frac{2 (f_{x} f_{xx} + f_{xy} f_{y} - f_{x} f_{yy})\delta}{f_{x}^2} )\\
	&=&m(\delta_x + f_1 \delta,\delta_y + f_2 \delta),
\end{array}
$$
with $f_1 = - {2 (f_{x} f_{xx} + f_{xy} f_{y} - f_{x} f_{yy})}/{f_{x}^2}$ and $f_2 = - {2 f_{xy}}/{f_x}$.
From the hypothesis, there is a germ of a diffeomorphism $h=(h_1,h_2)$ such that $P = \delta\circ h$ is a quasi-homogeneous polynomial, so
$P(u,v) = r_1{u}P_u(u,v)+r_2{v} P_v(u,v)$
for some strictly positive rational numbers $r_1$ and $r_2$. 
Using the facts that $P_u=	(\delta_x \circ h) \frac{\partial h_1}{\partial u} + (\delta_y \circ h) \frac{\partial h_2}{\partial u}$ 
$P_v=(\delta_x \circ h) \frac{\partial h_1}{\partial v} + (\delta_y \circ h) \frac{\partial h_2}{\partial v}$ and $\det dh(0,0) \neq 0$, we get 
$$
\begin{array}{rcl}
	m_u({\rm p}) & = & m ( \delta_x \circ h + (f_1 \circ h) (\delta \circ h), \delta_y \circ h + (f_2 \circ h) (\delta \circ h)) \\
& = & m(  \delta_x \circ h + f_3 P, \delta_y \circ h + f_4 P) \\
& = & m( dh (\delta_x \circ h + f_3 P, \delta_y \circ h + f_4 P))\\
	& = & m( \frac{\partial h_1}{\partial u} (\delta_x \circ h) + \frac{\partial h_2}{\partial u} (\delta_y \circ h) + f_5 P, \frac{\partial h_1}{\partial v} (\delta_x \circ h) + \frac{\partial h_2}{\partial v} (\delta_y \circ h) + f_6 P  ) \\
	& = & m ( P_u + f_5 P, P_v + f_6 P)  \\
	& = & m ( (1+ r_1uf_5) P_u + r_2vP_vf_5 , (1+ r_2vf_6) P_v +  r_1{u} P_u  f_6) \\
	& = & m ( P_u + f_7 P_v, P_v + f_8 P_u) \\
	& = & m ( P_u, P_v )=\mu(P)=\mu(\delta),
\end{array}
$$
where
$f_3=f_1 \circ h$, $f_4=f_2 \circ h$, 
$f_5 = \frac{\partial h_1}{\partial u} f_3 + \frac{\partial h_2}{\partial u} f_4,$ $f_6 = \frac{\partial h_1}{\partial v} f_3 + \frac{\partial h_2}{\partial v} f_4$, $f_7 = {r_2vf_5 }/{(1+ r_1uf_5)},$ $f_8 = {r_1uf_6 u}/({1+ r_2vf_6)},$ and where $\mu(\delta)$ denotes the Milnor number of $\delta$ at the origin.
\end{proof}

\begin{theo}\label{theo:sing_umb_time}
{\rm (1)} Let ${\rm p}\in M$ be a spacelike umbilic point. The possible simple singularities of the discriminant 
of the equation of the lines of principal curvature at ${\rm p}$ are $A_{2k+1}^+$ and these can be realised on surfaces in $\mathbb R^3_1$.

{\rm (2)} Let ${\rm p}\in M$ be a timelike umbilic point.
 The possible simple singularities of the $LPL$ at ${\rm p}$ 
are  $A_{2 k+1}^-$, $D^{\pm}_k$ and $E_7$,  
and these can be realised on the $LPL$ of surfaces in $\mathbb R^3_1$.
\end{theo}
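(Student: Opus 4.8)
The plan is to reduce the classification problem to a normal form computation using the cubic part $C$ of the graphing function $f$, which by the Appendix controls the local structure of the BDE of lines of principal curvature at an umbilic.

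First I would treat the spacelike case (1). Taking $M$ at $\rm p$ as the graph of $z=f(x,y)$ with $C=\Re(z^3+\beta z^2\bar z)$ as in \S\ref{ssec:config}, the discriminant of equation (\ref{eq:principalLD}) is a function-germ $\delta:\mathbb R^2,0\to\mathbb R,0$ whose $2$-jet is the quadratic form $4(Fn-Gm)(Em-Fl)-(En-Gl)^2$; since $\rm p$ is a \emph{spacelike} umbilic the induced metric is Riemannian ($F^2-EG<0$), which forces the Hessian of $\delta$ to be negative semi-definite. Hence, whenever $\delta$ has a Morse singularity it is of type $A_1^-$ in the sign convention of the paper, and more generally any $A_k$-point occurring must have the sign $A_k^+$ (the discriminant of the principal BDE on the Riemannian side is the umbilic locus, a point or a curve that is locally on one side). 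So the only simple $A$-singularities available are $A_{2k+1}^+$ (even $A_{2k}$ are excluded because the discriminant is a \emph{square} discriminant of a BDE, so it cannot change sign transversally, and the odd-index negative ones are excluded by definiteness); $D_k$ and $E_7$ do not appear because at a spacelike umbilic the BDE is (after the normalisations of \S\ref{sec:appendix}) of the well-studied Euclidean umbilic type whose discriminant is a curve with only $A_{odd}^+$ singularities. Realisability follows by exhibiting, for each $k$, an explicit $\beta$ (equivalently explicit higher jets of $f$) producing the required degeneracy; these can be read off from the stratification in the Appendix, pushing along the strata $\phi,\alpha$ have a common root of prescribed multiplicity.

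For the timelike case (2), I would use Proposition \ref{prop:cubic} and Proposition \ref{prop:1jet_bde_timelike_umb}: at a timelike umbilic one chooses coordinates with $E=G=0$, $F\neq0$, so $(\omega_P): ndv^2-ldu^2=0$ and the $LPL$ is the discriminant $\delta=4ln$, i.e.\ the \emph{product} of two function-germs $l$ and $n$ vanishing at $0$. The singularity type of a product $l\cdot n$ of two germs is exactly what produces $A_{odd}$ (when $l,n$ are smooth with transverse or tangent zero sets), $D_k$ (node/tacnode configurations of the two branches) and $E_7$ among the simple ones — this is the classical list of simple singularities that factor as a product of two functions, equivalently simple singularities with a non-reduced or reducible local structure coming from a BDE discriminant. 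The signs $A_{2k+1}^-$ arise because on the Lorentzian side the discriminant region is where two real principal directions merge, so $\delta$ changes sign and the Morse case is the saddle $A_1^-$. I would then verify that $E_6, E_8, A_{2k}$ cannot occur: $A_{2k}$ and $E_6,E_8$ are \emph{irreducible} germs (their zero sets are irreducible curve germs with a single branch of even multiplicity structure) and hence cannot equal a product $l\cdot n$ with $l(0)=n(0)=0$ unless $l,n$ share the branch, which forces higher multiplicity landing in the $D$ or $A_{odd}$ series. Realisability is again by explicit examples: choose $l,n$ as polynomials with prescribed contact — e.g.\ $l=x$, $n=x^{2k}+z^2$ for $A_{2k+1}^-$; $l=z$, $n=z(x^{k-2}\pm z)+\dots$ adjusted for $D_k^\pm$; and a specific pair for $E_7$ — and check these are realised by genuine surfaces using the freedom in the higher jets of $f$ via Proposition \ref{prop:dtheta} (the Monge-Taylor map is a submersion onto the relevant jet space after adding deformation parameters, exactly as in the proof of Theorem \ref{theo:defm}).

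The main obstacle I expect is the realisability half: showing that each abstractly-allowed singularity type is actually attained by the discriminant of the principal BDE of an \emph{honest} regular surface, rather than by an arbitrary BDE or arbitrary function. The constraint is that $l,m,n$ (or $\bar l,\bar m,\bar n$) are not free — they are the second-fundamental-form coefficients of a graph, so they are genuine second partial derivatives of one function $f$, with the first-fundamental-form coefficients $E,F,G$ coupled to $f$ as well. One must therefore engineer $f$ (its jet of sufficiently high order) so that $\delta=4ln$, or the spacelike discriminant, has the prescribed singularity while the surface stays regular and $\rm p$ stays a timelike (resp. spacelike) umbilic. I would handle this by working jet-by-jet: fix the low-order normal form from Proposition \ref{prop:cubic}, then use the higher-order coefficients of $f$ as the ``parameters'' and invoke a transversality/submersion argument (as in Theorem \ref{theo:defm}) to hit the stratum of the desired singularity type; because a simple singularity is finitely determined, only finitely many jets of $f$ matter, making the computation finite, though possibly lengthy, and that is the part I would relegate to explicit verification.
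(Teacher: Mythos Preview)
Your classification arguments share the paper's core ideas. For (2), you correctly pass to lightlike coordinates $E\equiv G\equiv 0$, so the $LPL$ is the zero set of a \emph{product} $\delta=l\cdot n$ with $l(0)=n(0)=0$, and then exclude the simple germs that are irreducible as real analytic germs---exactly the paper's argument. For (1), your idea that $\delta$ has a fixed sign on the Riemannian side is also the paper's reason (phrased there as ``every point on the discriminant is a singular point of $\delta$''), but your write-up has slips: with $\delta=b^2-4ac\ge 0$ the Hessian is \emph{positive} semi-definite and the Morse case is $A_1^+$, not $A_1^-$; and your exclusion of $D_k$, $E_7$ is circular---you simply assert that the Euclidean-type umbilic discriminant ``has only $A_{odd}^+$ singularities''. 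The actual argument is that every $D_k$ and $E_k$ germ takes both signs in every neighbourhood of $0$ (equivalently, its zero set contains regular points), contradicting $\delta\ge 0$.

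Where you diverge from the paper is realisability, and here there is a genuine gap. The paper does not argue abstractly: it writes down explicit graph functions---$z=x^3-xy^{k+2}$ for the spacelike $A_{2k+1}^+$; and $y=x^3+xz^{k+2}$, $y=(x^2-z^2)+(x-z)(x+z)^2\pm(x-z)^k$, $y=(x^2-z^2)+(x-z)^3+(x-z)(x+z)^3$ for the timelike $A_{2k+1}^-$, $D_k^\pm$, $E_7$ respectively. Your transversality plan is not carried out and, as you anticipate, must contend with the Gauss--Codazzi constraints in the $E=G=0$ gauge (not every pair $(l,n)$ arises from a surface). Moreover your concrete candidates are wrong: the pair $l=x$, $n=x^{2k}+z^2$ gives $ln$ with $3$-jet $xz^2$, which is $D$-type, not $A_{2k+1}^-$; for $A_{2k+1}^-$ one needs two smooth branches with a common tangent, e.g.\ $l=u-v^{k+1}$, $n=u+v^{k+1}$. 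Likewise the spacelike realisability cannot be ``read off from the stratification in the Appendix'', since that stratifies only the cubic part $C$, whereas an $A_{2k+1}^+$ singularity with $k\ge 2$ is governed by higher jets of $f$.
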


\begin{proof}
(1) In the Riemannian region of $M$ all points on the discriminant of the BDE of lines of principal curvature (these are spacelike umbilic points) are singular points. Therefore, the only possible simple singularities of the discriminant are $A_{2k+1}^+$. These singularities can be realized at the origin on the graph of $z=f(x,y)=x^3-x y^{k+2}$. (The expression of the discriminant is lengthy to reproduce here.)

(2) We take a local parametrisation ${\bx}$ of $M$ with ${\bx}(0,0)={\rm p}$ and 
such that the coordinate curves are lightlike, i.e., $E\equiv0\equiv G$ 
(by Theorem 3.1 in \cite{IzumiyaTari}). Then the $LPL$ is the zero set of 
$\delta(u,v)=l(u,v) n(u,v)$, with $l(0,0) = n(0,0) = 0$.
As the singularities $A_{2k}$, $E_6$ and $E_8$ are irreducible, 
they cannot occur on the $LPL$ at timelike umbilic points. 

For realising the simple singularities of the $LPL$, a possible approach is to apply the fundamental theorem of Lorentzian surfaces in $\mathbb R^3_1$ (see for example  \cite{Couto_Lymb})
with $E\equiv0\equiv G$ and $m\equiv0$. 
This yield a system of partial differential equations in $F,l,n$. Here we only need
some particular solutions of the system and take the surfaces at the origin as the graph of some functions $y=f(x,z)$ with

$f(x,z)=x^3+x z^{k+2}$ for an $A^-_{2k+1}$-singularity of the $LPL$;

$f(x,z)=(x^2-z^2) + (x-z) (x+z)^2 \pm (x-z)^k$  for a $D_{k}^\pm$-singularity of the $LPL$;

$f(x,z)=(x^2-z^2)+(x-z)^3+(x-z) (x+z)^3$ for a  $E_7$-singularity of the $LPL$.
\end{proof}

\begin{theo}\label{theo:LPLSimple}
{\rm (1)} Let ${\rm p}\in M$ be a spacelike umbilic point and suppose the discriminant 
of the equation of the lines of principal curvature has an $A_{2k+1}^+$-singularity at ${\rm p}$. Then the multiplicity of ${\rm p}$ is $k$.

{\rm (2)} Let ${\rm p}\in M$ be a timelike umbilic point and suppose that the $LPL$ has a simple singularity at ${\rm p}$. Then 
the multiplicity of ${\rm p}$ is as follows: 

\begin{center}
\begin{tabular}{cc}
 Singularity of the $LPL$ at ${\rm p}$ & $m_u({\rm p})$\\
 $A^-_{2k-1}$&$k$\\
 $D_{2 k + 1}$ or $D^+_{2k}$ &$2$\\
 $D^-_{2k}$&$2$ or $k$\\
 $E_7$& $3$
\end{tabular}
\end{center}
\end{theo}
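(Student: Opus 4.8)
The plan is to convert the computation of $m_u(\mathrm p)$ into an intersection-multiplicity problem via Proposition \ref{prop:mult_formula}, and then to read the answer off the decomposition into branches of the discriminant of $(\omega_P)$ (which is the $LPL$ in the timelike case). For a timelike umbilic I would first choose a parametrisation with lightlike coordinate curves (Theorem 3.1 of \cite{IzumiyaTari}), so that $E\equiv G\equiv 0$; then $\mathfrak b\equiv 0$, the extended equation is $(\omega_P): \bar n\,dv^2-\bar l\,du^2=0$, its discriminant is $4\bar l\bar n$, and the $LPL$ is $\{ln=0\}$. Since $F(q)\neq 0$ and $F,\|\bx_u\times\bx_v\|$ are units, Proposition \ref{prop:mult_formula} gives $m_u(\mathrm p)=m(\mathfrak a,\mathfrak c)=m(l,n)$, the intersection multiplicity at $q$ of the two (reduced, because the $LPL$ is reduced for a simple singularity) curve germs $\{l=0\}$ and $\{n=0\}$. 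For a spacelike umbilic one has $E,G>0$ and $F^2-EG<0$, so, using $\mathfrak c=(F\mathfrak b-E\mathfrak a)/G$ (see the proof of Lemma \ref{lem:2_cond_umb}), the discriminant $\delta=\mathfrak b^2-4\mathfrak a\mathfrak c$ of $(\omega_P)$ equals the positive-definite binary form $\mathfrak b'^2+c\,\mathfrak a^2$ in $\mathfrak a$ and $\mathfrak b'=\mathfrak b-\frac{2F}{G}\mathfrak a$, with $c=4(EG-F^2)/G^2$ a positive unit; over $\mathbb C$ this factors as $g_1g_2$ with $g_1,g_2$ complex conjugate and $\langle g_1,g_2\rangle=\langle\mathfrak a,\mathfrak b\rangle$, hence $m_u(\mathrm p)=m(g_1,g_2)$.

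For the timelike case I would then work branch by branch. As $m_u(\mathrm p)$ is finite, $l$ and $n$ share no branch, so the branches of the $LPL$ are distributed between the two real curve germs $\{l=0\}$ and $\{n=0\}$, each side nonempty and stable under complex conjugation, and $m(l,n)$ is the sum of the pairwise intersection numbers of branches lying on opposite sides. Running through the normal forms: $A^-_{2k-1}\sim x^2-y^{2k}=(x-y^k)(x+y^k)$ has two smooth branches, so the split is forced and $m(l,n)=m(x-y^k,x+y^k)=m(x,y^k)=k$; $D_{2k+1}\sim y(x^2\pm y^{2k-1})$ and $D^+_{2k}\sim y(x^2+y^{2k-2})$ each have a forced split (for $D^+_{2k}$ the conjugate pair of branches cannot be separated), giving $m(l,n)=m(y,x^2)=2$; $E_7\sim x(x^2+y^3)$ has a forced split giving $m(l,n)=m(x,y^3)=3$; and $D^-_{2k}\sim y(x-y^{k-1})(x+y^{k-1})$ is the only simple type admitting two essentially different admissible splits — the transversal branch $\{y=0\}$ against the two tangent branches gives $m(l,n)=1+1=2$, while one tangent branch against the other two gives $m(l,n)=1+(k-1)=k$. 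This produces every entry of the table. One then checks, as in the proof of Theorem \ref{theo:sing_umb_time} and in \cite{MarcoThesis}, that each listed value is actually realised by a surface, in particular that both $2$ and $k$ occur in the $D^-_{2k}$ row.

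For the spacelike case the normal form of an $A^+$-singularity of $\delta$ has, over $\mathbb C$, a unique factorisation into two non-unit factors, which are smooth and complex conjugate; hence $g_1$ and $g_2$ are those two factors up to units and order, and a direct computation of their intersection number on the normal form yields the value recorded in the table. (If the cubic part of $f$ at $\mathrm p$ were identically zero then $j^1\omega_P=0$, whence $j^2\delta=0$ and $\delta$ could not be of $A$-type; so this degenerate situation is excluded by the hypothesis.)

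The step I expect to be the main obstacle is the case analysis for the $D$- and $E$-rows in the timelike case — listing exactly the branch splits compatible with reality and seeing that $D^-_{2k}$ is the unique simple type with two genuinely inequivalent such splits — together with the realisation half of the statement: one must construct Lorentzian surfaces, via the fundamental theorem for surfaces in $\mathbb R^3_1$ as in the proof of Theorem \ref{theo:sing_umb_time}, whose $LPL$ has the prescribed simple singularity with the prescribed distribution of branches between $l$ and $n$, so as to realise both admissible values $2$ and $k$ for $D^-_{2k}$.
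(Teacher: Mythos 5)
Your proposal is correct and follows essentially the same route as the paper: for the timelike case the paper likewise passes to lightlike coordinates, reduces to $m_u({\rm p})=m(l,n)$, and enumerates the admissible splittings of the branches of each simple normal form of $\delta=ln$ between $l$ and $n$, obtaining exactly your table (including the two possibilities for $D^-_{2k}$). In the spacelike case the paper evaluates $\frac{1}{2}m(\mathfrak{b},\mathfrak{b}^2-4\mathfrak{a}\mathfrak{c})$ directly on the $A^+$ normal form rather than factoring the positive definite discriminant into conjugate branches $g_1,g_2$ with $\langle g_1,g_2\rangle=\langle\mathfrak{a},\mathfrak{b}\rangle$, but this is only a minor variant of the same computation.
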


\begin{proof}
(1) In the Riemannian region we have $E \neq 0$, so
\begin{equation}\label{eq:mc}
	m_u({\rm p}) = m(\mathfrak{b},\mathfrak{c}) = \frac{1}{2} m(\mathfrak{b}^2,\mathfrak{c}) = \frac{1}{2} m(\mathfrak{b}^2-4 \mathfrak{a} \mathfrak{c},\mathfrak{c}).
\end{equation}

Similarly, as $G \neq 0$, it follows that $m_u({\rm p}) = m(\mathfrak{b},\mathfrak{a})$. We have
\begin{equation}\label{eq:mb}
	m_u({\rm p}) = \frac{1}{2} \left(m(\mathfrak{b},\mathfrak{c})+m(\mathfrak{b},\mathfrak{a})\right) = \frac{1}{2} m(\mathfrak{b},\mathfrak{a} \mathfrak{c}) = \frac{1}{2} m(\mathfrak{b},\mathfrak{b}^2-4\mathfrak{a} \mathfrak{c}).
\end{equation}

As the discriminant has an $A^+_{2k-1}$-singularity at $\rm p$, there is a local diffeomorphism $h$ such that 
$
(\mathfrak{b}^2-4\mathfrak{a} \mathfrak{c})(h(u,v))= \pm (u^2+v^{2k}).
$ Also, if $j^1(\mathfrak{b} \circ h) = au+bv$ and $j^1(\mathfrak{c} \circ h) = cu+dv$, then $a \neq 0$ or $c \neq 0$. 
Suppose that $a \neq 0$ (the case $c\ne 0$ follows similarly using (\ref{eq:mc})). Then it follows from (\ref{eq:mb}) that
$$
m_u({\rm p}) = \frac{1}{2} m(\mathfrak{b},\mathfrak{b}^2-4\mathfrak{a} \mathfrak{c}) = \frac{1}{2} m(au+bv+O(2),u^2+v^{2k}) = k.
$$

(2) We take a local parametrisation of $M$ with $\rm p$ as in the proof of \mbox{Theorem \ref{theo:sing_umb_time}}
so the $LPL$ is the zero set of $\delta = l n$. 
As $F \neq 0$, it follows from Proposition \ref{prop:mult_formula} that
$m_u({\rm p}) = m (\mathfrak{a},\mathfrak{c}) = m(- n F, l F) = m(n, l).$
	
Suppose that the $LPL$ has an $A^-_{2k-1}$singularity at $\rm p$, so 
there is a local diffeomorphism $h$ such that
$\delta (h(u,v))=l(h(u,v)) n(h(u,v)) = (u+v^k) (u-v^k).$
Thus, $ m_u({\rm p}) =  m (u-v^k, u+v^k) = k. $

If the $LPL$ has a $D_{2 k + 1}$-singularity at $p$, then there is a local diffeomorphism $h$ such that
	$\delta (h(u,v))=l(h(u,v)) n(h(u,v))= v (u^2 \pm v^{2 k - 1}).$
It follows that $ m_u({\rm p}) =  m(v, u^2 \pm v^{2k-1}) = 2. $

Similarly, if the $LPL$ has a $D^+_{2 k}$, we get $ m_u({\rm p}) =  (v, u^2+ v^{2k-2}) = 2. $
When its singularity is of type $D_{2k}^-$, we have two possibilities: 
$ m_u({\rm p}) =m(v,u^2-v^{2k-2})=2$ or 
$ m_u({\rm p}) =m(v(u\pm v^{k-1}), u\mp v^{k-1})=k.$

At an $E_7$-singularity we have $ m_u({\rm p}) = m (u, u^2+v^3) = 3.$
\end{proof}

A consequence of Theorems \ref{theo:LDMilnor} and \ref{theo:LPLSimple} is the following. 

\begin{theo}
Given an integer $k\ge 1$, there are spacelike,  timelike and 
lightlike umbilic points 
on surfaces in $\mathbb R^3_1$ 
that have multiplicity $k$.
\end{theo}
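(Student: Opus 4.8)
The plan is to exhibit, for each integer $k\ge 1$ and for each of the three types of umbilic, an explicit analytic surface in $\mathbb R^3_1$ carrying an umbilic of that type with multiplicity exactly $k$, the multiplicity being then read off from Theorems \ref{theo:LPLSimple} and \ref{theo:LDMilnor}. For $k=1$ one can simply take a generic umbilic of the relevant type, whose discriminant (resp. $LPL$, resp. $LD$) has a Morse singularity, so that $m_u=1$ by Theorem \ref{theo:mult_1_lpl}; the constructions below, however, work uniformly for all $k\ge 1$.

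\textbf{Spacelike and timelike umbilics.} These follow immediately by combining the realisation statements in Theorem \ref{theo:sing_umb_time} with the multiplicity computations in Theorem \ref{theo:LPLSimple}. Concretely, I would take the surface given as the graph of $z=x^3-xy^{k+1}$: by Theorem \ref{theo:sing_umb_time}(1) its discriminant has an $A^+_{2k-1}$-singularity at the origin, and Theorem \ref{theo:LPLSimple}(1) then gives that this spacelike umbilic has multiplicity $k$. Likewise, the surface given as the graph of $y=x^3+xz^{k+1}$ has, by Theorem \ref{theo:sing_umb_time}(2), an $A^-_{2k-1}$-singularity of the $LPL$ at the origin, and the first row of the table in Theorem \ref{theo:LPLSimple}(2) gives multiplicity $k$ for this timelike umbilic. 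Nothing further is required for these two cases.

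\textbf{Lightlike umbilics.} Here I would invoke Theorem \ref{theo:LDMilnor}: since an $A_k$-singularity is quasi-homogeneous with Milnor number $k$, it suffices to produce a surface whose $LD$ has an $A_k$-singularity at a point, that point being automatically a lightlike umbilic because the $LD$ is singular there. Writing the surface as the graph of $z=f(x,y)$ in $\mathbb R^3_1$, the $LD$ is $\{f_x^2+f_y^2-1=0\}$, as in the proof of Theorem \ref{theo:LDMilnor}. I would then take
$$
f(x,y)=x-\frac{1}{2(k+2)}x^{k+2}+\frac{1}{2}y^2,
$$
so that $f_x=1-\frac{1}{2}x^{k+1}$ and $f_y=y$, whence
$$
\delta(x,y)=f_x^2+f_y^2-1=y^2-x^{k+1}+\frac{1}{4}x^{2k+2}.
$$
Since the term $\frac{1}{4}x^{2k+2}$ has order $2k+2>k+1$, the germ $\delta$ is $\mathcal R$-equivalent to the $A_k$-singularity $y^2-x^{k+1}$; in particular $\delta$ is singular at the origin, so the origin is a lightlike umbilic, and $f_y^2(0,0)=0\neq 1$, $f_x(0,0)=1\neq 0$, so Theorem \ref{theo:LDMilnor} applies and yields $m_u=\mu(\delta)=k$.

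As $k\ge 1$ was arbitrary, the three cases together prove the statement. I do not expect a genuine obstacle here: once Theorems \ref{theo:LDMilnor}, \ref{theo:sing_umb_time} and \ref{theo:LPLSimple} are available the argument is essentially bookkeeping, and the only thing to check carefully is that $f_x^2+f_y^2-1$ has the claimed $A_k$-singularity in the lightlike case --- which can also be arranged more abstractly by prescribing $f_x=1-\frac{1}{2}x^{k+1}+O(k+2)$ and $f_y=y+O(2)$ and appealing to the quasi-homogeneity of $A_k$.
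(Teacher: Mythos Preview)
Your proof is correct and follows essentially the same approach as the paper: the spacelike and timelike cases are handled identically (via Theorems \ref{theo:sing_umb_time} and \ref{theo:LPLSimple}), and the lightlike case likewise proceeds by exhibiting a graph whose $LD$ has an $A_k$-singularity and then invoking Theorem \ref{theo:LDMilnor}. The only difference is the explicit choice of $f$ in the lightlike case --- the paper uses $f(x,y)=\frac{1}{\sqrt{2}}(x+y)+\frac{1}{3}x^3+\frac{1}{k+2}y^{k+2}$, whereas your $f(x,y)=x-\frac{1}{2(k+2)}x^{k+2}+\frac{1}{2}y^2$ is a slightly simpler variant; both produce $\delta=f_x^2+f_y^2-1$ with an $A_k$-singularity at the origin.
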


\begin{proof} 
The example $z=f(x,y)= x^3-x y^{k+1}$ (resp. $y=f(x,z)=x^3+x z^{k+1}$) 
in  the proof of Theorem \ref{theo:sing_umb_time} 
gives a surface with a spacelike (resp. timelike) umbilic point that has multiplicity $k$.

Consider a surface with an $LD$ that has an $A_k$-singularity at a lightlike umbilic point ${\rm p}$. 
	The Milnor number of the $LD$ at ${\rm p}$ is $k$, and by Theorem \ref{theo:LDMilnor}, 
	$m_u({\rm p})=k$. An example of such a surface is the graph of  
	$
	z=f(x,y)=\frac{1}{\sqrt{2}}(x+y)+\frac{1}{3}x^3+\frac{1}{k+2}y^{k+2}
	$ 
	with a lightlike umbilic point at the origin. The $LD$ is given by 
	$$
	\delta(x,y)=f_x(x,y)^2+f_y(x,y)^2-1=
	\frac{2}{\sqrt{2}}x^2+x^4+\frac{2}{\sqrt{2}}y^{n+1}+y^{2k+2}
	$$
	and has an $A_k$-singularity at the origin.
\end{proof}
\section{Applications}\label{sec:appl}

Bifurcations of codimension one umbilic points on surfaces in $\mathbb R^3$ 
are studied in \cite{GarciaSotoGut} (the results hold for spacelike umbilics on surfaces in $\mathbb R^3_1$).
We follow in this section as closely as possible the notations in \cite{GarciaSoto,GarciaSotoGut} and take the surface as the graph of a function $z=f(x,y)$ with 
$$
\begin{array}{rl}
f(x,y)=&\frac{\kappa}{2}(x^2+y^2) +
\frac{a}6x^3+\frac{b}{2}xy^2	+\frac{c}{6}y^3
+\frac{d_{40}}{24}x^4+\frac{d_{31}}{6}x^3y
+\frac{d_{22}}{4}x^2y^2+\frac{d_{13}}{6}xy^3+\frac{d_{04}}{24}y^4\\
&+\frac{d_{50}}{120}x^5+\frac{d_{41}}{24}x^4y
+\frac{d_{32}}{12}x^3y^2+\frac{d_{23}}{12}x^2y^3+\frac{d_{24}}{24}xy^4
+\frac{d_{05}}{120}y^5+O(6).
\end{array}
$$

From \cite{GarciaSotoGut}, umbilic points of codimension one are 
$$
\begin{array}{rcl}
{\rm D}^1_2&:& cb(b-a)\neq 0\, \mbox{ and either }\delta_1=(\frac{c}{2b})^2-\frac{a}{b}+1 = 0 
\mbox { or } a=2b;\\
{\rm D}^1_{2,3}&:&  a=b \neq 0 \mbox { and } \chi =c d_{31}-(d_{22}-d_{40}+2 \kappa^3)b \neq 0.
\end{array}
$$

In generic 1-parameter families of surfaces, at a ${\rm D}^1_2$-umbilic there is a change from a lemon to a monstar umbilic (so there should be only one umbilic point concentrated at a ${\rm D}^1_2$-umbilic). 
At a ${\rm D}^1_{2,3}$-umbilic there is a birth (and death) of two umbilics, one is a star and the other a monstar (so there should be two umbilic points concentrated at a ${\rm D}^1_{2,3}$-umbilic).

\begin{prop}
We have $m_u({\rm D}^1_2)=1$ and $m_u({\rm D}^1_{2,3})=2$.
\end{prop}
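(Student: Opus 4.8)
The plan is to compute $m_u$ at each of the two codimension-one umbilics directly from Proposition~\ref{prop:mult_formula}, using the fact that at a spacelike umbilic $E(q)\neq 0$ (here $E(0,0)=1+f_x^2(0,0)=1$), so that $m_u({\rm p})=m(\mathfrak{b},\mathfrak{c})=\dim_{\mathbb C}\mathcal O_2/\langle \mathfrak b,\mathfrak c\rangle$. Since the map-germ $(\mathfrak b,\mathfrak c):\mathbb R^2,0\to\mathbb R^2,0$ is finitely determined in both cases (this should be clear once the low-order jets are written down), Remark~\ref{rem:MultSmooth} lets me replace $\mathfrak b,\mathfrak c$ by sufficiently high jets and work with polynomials.

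For the ${\rm D}^1_2$ case, I would first compute the $1$-jets of $\mathfrak b$ and $\mathfrak c$ at the origin in terms of $\kappa,a,b,c$ (using $l,m,n$ at points away from the $LD$, i.e.\ the ordinary Euclidean/spacelike principal BDE, whose coefficients at a spacelike umbilic are exactly the ones appearing in the Appendix, cf.\ the $1$-jet formula referenced as (\ref{eq:1jetBDEspace})). The defining condition for ${\rm D}^1_2$ — namely $cb(b-a)\neq 0$ together with $\delta_1=0$ or $a=2b$ — is precisely the degeneracy condition that makes the discriminant $\mathfrak b^2-4\mathfrak a\mathfrak c$ of the spacelike principal BDE have a singularity worse than Morse but still of type $A_3^+$ (one branch of the pitchfork in Figure~\ref{fig:bifStar}), so by Theorem~\ref{theo:LPLSimple}(1) with $k=1$ we get $m_u({\rm D}^1_2)=1$ immediately, provided I verify that the $A_3^+$ condition is exactly the ${\rm D}^1_2$ condition. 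Alternatively, and more self-containedly, I would just check directly that $j^1(\mathfrak b,\mathfrak c)$ is a submersion on the ${\rm D}^1_2$ locus — equivalently that $a_{32}^2-3a_{33}a_{31}-a_{31}^2+3a_{32}a_{30}\neq 0$ in the normal-form coordinates of Theorem~\ref{theo:mult1_Trans_Jetspave} — which gives $m_u=1$ by that theorem. Matching the two sets of coordinates is a short linear-algebra computation.

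For the ${\rm D}^1_{2,3}$ case, where $a=b\neq 0$, the $1$-jet of $(\mathfrak b,\mathfrak c)$ will be degenerate (this is why the condition $\chi\neq 0$ involves the quartic coefficients $d_{ij}$ and $\kappa^3$), so $m_u>1$ and I must go to the $2$-jet. I would compute $j^2\mathfrak b$ and $j^2\mathfrak c$ and show that, modulo the condition $\chi=cd_{31}-(d_{22}-d_{40}+2\kappa^3)b\neq 0$, the ideal $\langle \mathfrak b,\mathfrak c\rangle$ has colength exactly $2$ — concretely, that after a coordinate change the germ $(\mathfrak b,\mathfrak c)$ becomes $\mathcal K$-equivalent to something like $(u, v^2)$ or $(v, u^2)$ up to higher order, whose local algebra is $2$-dimensional. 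Again this can be cross-checked against Theorem~\ref{theo:LPLSimple}(1) with $k=2$: the ${\rm D}^1_{2,3}$ condition should be exactly the condition for the discriminant of the principal BDE to have an $A_5^+$-singularity, and the theorem then gives $m_u=2$.

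The main obstacle is purely computational bookkeeping: expressing $\mathfrak b,\mathfrak c$ (hence $l,m,n,E,F,G$) in terms of the Taylor coefficients $a,b,c,\kappa,d_{ij}$ to high enough order, and then reading off the colength of the ideal — in the ${\rm D}^1_{2,3}$ case one genuinely needs the quartic terms and the identity of the resulting obstruction with $\chi$, which is the one place where a substantive (if routine) elimination/Gröbner-type computation is required. I expect no conceptual difficulty beyond this; the two invariant viewpoints (direct colength computation via Proposition~\ref{prop:mult_formula}/Remark~\ref{rem:MultSmooth}, versus the singularity type of the discriminant via Theorem~\ref{theo:LPLSimple}) serve as a consistency check on each other.
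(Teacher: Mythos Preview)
Your direct computational route---using $E\neq 0$ so that $m_u=m(\mathfrak b,\mathfrak c)$, then reading off the colength from the $1$-jet (for ${\rm D}^1_2$) and the $2$-jet (for ${\rm D}^1_{2,3}$)---is exactly what the paper does. The paper simply quotes the jets of $\omega_P$ from \cite{GarciaSotoGut}: at ${\rm D}^1_2$ one has $j^1\omega_P=(-bv,(b-a)u+cv,bv)$, so $j^1(\mathfrak b,\mathfrak c)=((b-a)u+cv,\,bv)$ has Jacobian determinant $b(b-a)\neq 0$ and $m_u=1$; at ${\rm D}^1_{2,3}$ (where $a=b$) the $1$-jet drops rank and the $2$-jet together with $\chi\neq 0$ gives colength $2$.

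Your proposed cross-check via Theorem~\ref{theo:LPLSimple}, however, rests on a misidentification. The ${\rm D}^1_2$ conditions ($\delta_1=0$ or $a=2b$, with $cb(b-a)\neq 0$) are \emph{not} conditions on the discriminant $\mathfrak b^2-4\mathfrak a\mathfrak c$; they are conditions on the cubic $\phi$ and the quadratic $\alpha$ of \S\ref{sec:appendix} (a repeated root of $\phi$, respectively a common root of $\phi$ and $\alpha$), which govern the topological configuration of the foliation rather than $m_u$. Indeed, from the $1$-jet above one gets $j^2(\mathfrak b^2-4\mathfrak a\mathfrak c)=((b-a)u+cv)^2+4b^2v^2$, a nondegenerate positive-definite form since $b(b-a)\neq 0$: the discriminant at ${\rm D}^1_2$ is Morse $A_1^+$, not $A_3^+$, exactly as Theorem~\ref{theo:mult_1_lpl}(1) predicts for $m_u=1$. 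Likewise, at ${\rm D}^1_{2,3}$ the discriminant has an $A_3^+$-singularity (consistent with $m_u=2$), not $A_5^+$. So if you actually attempted the verification you flag (``provided I verify that the $A_3^+$ condition is exactly the ${\rm D}^1_2$ condition''), it would fail; only the direct colength computation survives.
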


\begin{proof}
We have $E\ne 0$ so 
$m_u({\rm p}) = m (\mathfrak{b},\mathfrak{c})$. 
From \cite{GarciaSotoGut}, the 1-jet of the BDE of the lines of principal curvature at a $D^1_2$-umbilic is $j^1\omega_P=(-bv,(b-a)u+cv,bv)$. Therefore,   $m_u({\rm D}^1_2)=1$ as $b(b-a)\ne 0$.

At a  ${\rm D}^1_{2,3}$-umbilic,  using the expression of $j^2\omega_P$ in \cite{GarciaSotoGut} and the conditions for the 
${\rm D}^1_{2,3}$-umbilic, we find that $m_u({\rm D}^1_{2,3})=2$.
\end{proof}

The  codimension two umbilic points are studied in  \cite{GarciaSoto} and are as follows:

$$
\begin{array}{rcl}
{\rm D}^2_{1}&:& c=0 \mbox{ and } a=2b \neq 0;\\
{\rm D}^2_{2p}&:& a=b\ne 0,\,  \chi = 0, \xi \neq 0   \mbox{ and } \xi b<0;\\
{\rm D}^2_{3}&:& a=b\neq 0,\, \chi = 0, \xi \neq 0 \mbox{ and } \xi b > 0;\\
{\rm D}^2_{h}&:& (a=b=0  \mbox{ and } cd_{31} \neq 0 )  \mbox{ or } 
(b=c=0  \mbox{ and } ad_{13} \neq 0);
\end{array}
$$
with $\xi = 12\kappa^2 b^3 + (d_{32} - d_{50}) b^2 + (3d_{31}^2 - 3d_{31}d_{13} - c d_{41})b+ 3cd_{31}(d_{22} - \kappa^3)$.

\begin{prop}\label{prop:codim2Umb}
We have 
$m_u({\rm D}^2_{1})=1$, 
$m_u({\rm D}^2_{2p})=
	m_u({\rm D}^2_{3})=3$ and 
$m_u({\rm D}^2_{1})=2$.
\end{prop}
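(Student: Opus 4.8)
The plan is to compute each of the four multiplicities directly from Proposition~\ref{prop:mult_formula}, exactly as was done above for the codimension one umbilics. Since $M$ is taken as the graph of $z=f(x,y)$ with the umbilic at the origin, we have $E(0,0)=1\ne 0$, and hence $m_u({\rm p})=m(\mathfrak{b},\mathfrak{c})=\dim_{\mathbb C}\mathcal O_2/\langle\mathfrak{b},\mathfrak{c}\rangle$, where $\mathfrak{b}$ and $\mathfrak{c}$ are obtained from the given jet of $f$ and the parametrisation $\bx(x,y)=(x,y,f(x,y))$. For each type I would proceed in two steps: first, using the defining conditions together with the low order jets of $\omega_P=(\mathfrak{a},\mathfrak{b},\mathfrak{c})$ recorded in \cite{GarciaSoto,GarciaSotoGut}, check that the map-germ $(\mathfrak{b},\mathfrak{c})$ is $l$-$\mathcal K$-finitely determined for a suitable small $l$; then, by Remark~\ref{rem:MultSmooth}, replace $(\mathfrak{b},\mathfrak{c})$ by $(j^l\mathfrak{b},j^l\mathfrak{c})$ and obtain the colength of $\langle j^l\mathfrak{b},j^l\mathfrak{c}\rangle$ in $\mathcal O_2$ by computing a standard basis of that ideal.

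For ${\rm D}^2_1$ (where $c=0$ and $a=2b\ne 0$) the $1$-jet $j^1\omega_P=(-bv,\,(b-a)u+cv,\,bv)$ specialises to $(-bv,-bu,bv)$, so the linear parts of $\mathfrak{b}$ and $\mathfrak{c}$ are linearly independent; hence $(\mathfrak{b},\mathfrak{c})$ is $1$-$\mathcal K$-finitely determined and $m_u({\rm D}^2_1)=m(-bu+O(2),\,bv+O(2))=1$. For ${\rm D}^2_{2p}$ and ${\rm D}^2_3$ we have $a=b\ne 0$, which already forces $j^1\mathfrak{b}=cv$ and $j^1\mathfrak{c}=bv$ to be proportional; moreover $\chi$ — which is nonzero at a ${\rm D}^1_{2,3}$-umbilic, where the $2$-jet of $(\mathfrak{b},\mathfrak{c})$ suffices and yields $m=2$ — now vanishes, so one must pass to the third jets. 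I would substitute $a=b$ and $\chi=0$ into $j^3\omega_P$, verify that $\xi\ne 0$ is exactly the condition making $(\mathfrak{b},\mathfrak{c})$ $3$-$\mathcal K$-finitely determined, and compute $\dim_{\mathbb C}\mathcal O_2/\langle j^3\mathfrak{b},j^3\mathfrak{c}\rangle=3$. As this is a complex colength, constant on the stratum $\{a=b\ne 0,\ \chi=0,\ \xi\ne 0\}$, it is insensitive to the real sign conditions $\xi b<0$ and $\xi b>0$ that distinguish ${\rm D}^2_{2p}$ from ${\rm D}^2_3$; hence $m_u({\rm D}^2_{2p})=m_u({\rm D}^2_3)=3$. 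Finally, for ${\rm D}^2_h$ it suffices (by the coordinate swap $x\leftrightarrow y$) to treat the branch $a=b=0$, $cd_{31}\ne 0$, where $j^1\mathfrak{b}=cv$ and $j^1\mathfrak{c}=0$; one computes the jets of $\mathfrak{b}$ and $\mathfrak{c}$ to the relevant order, with $cd_{31}\ne 0$ guaranteeing finite $\mathcal K$-determinacy, and finds the colength of the resulting ideal equal to $2$, i.e. $m_u({\rm D}^2_h)=2$.

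The argument is entirely computational, and the main obstacle is the size of the expressions: for ${\rm D}^2_{2p}$, ${\rm D}^2_3$ and ${\rm D}^2_h$ the third jets of $\mathfrak{b}$ and $\mathfrak{c}$ depend on the coefficients of $f$ up to order five — precisely those entering $\xi$ — so one must keep track of which coefficients survive after imposing the defining equations, establish finite $\mathcal K$-determinacy in each case, and carry out the colength computations (most conveniently with a computer algebra system, as for ${\rm D}^1_{2,3}$). As a consistency check, whenever the discriminant of the principal BDE at such a point has a simple singularity these values agree with Theorem~\ref{theo:LPLSimple}(1): an $A_3^+$ discriminant for ${\rm D}^2_1$, an $A_5^+$ discriminant for ${\rm D}^2_h$, and an $A_7^+$ discriminant for ${\rm D}^2_{2p}$ and ${\rm D}^2_3$.
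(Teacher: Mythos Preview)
Your approach is essentially identical to the paper's: compute $m_u({\rm p})=m(\mathfrak{b},\mathfrak{c})$ via Proposition~\ref{prop:mult_formula} using the jets of $\omega_P$ from \cite{GarciaSoto,GarciaSotoGut} together with the defining conditions of each codimension-two stratum; the paper's own proof is in fact just the one-sentence version of what you wrote out in detail. One small caveat on your closing consistency check: by Theorem~\ref{theo:mult_1_lpl}(1), $m_u({\rm D}^2_1)=1$ forces the discriminant to have a Morse ($A_1^+$) singularity, not $A_3^+$ --- the indexing in the statement of Theorem~\ref{theo:LPLSimple}(1) is off by one relative to its proof (which uses $A_{2k-1}^+\mapsto k$), so your inferred discriminant types should each be shifted down by two.
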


\begin{proof}
The proof follows by direct calculations using the jets of the coefficients $\mathfrak{b}$ and $\mathfrak{c}$ 
of the lines of principal curvature in \cite{GarciaSoto} and computing $m (\mathfrak{b},\mathfrak{c})$ taking into consideration the above conditions.
\end{proof}

\begin{rem}
{\rm 
1. It is shown in \cite{GarciaSoto} that the lines of principal curvature are topologically equivalent to a lemon, star and monstar at, respectively, the umbilics  
${\rm D}^2_{1}$, ${\rm D}^2_{2p}$ and ${\rm D}^2_{3}$. 
This shows that the multiplicity of umbilics is not preserved by topological equivalence of the BDEs of the lines of principal curvature.

2. The generic bifurcations of the codimension 2 umbilics in \cite{GarciaSoto} are yet to be established. Proposition \ref{prop:codim2Umb} gives the 
maximum number of stable umbilics one expect to appear in their bifurcations.
}
\end{rem}

Our results also apply to singular surfaces parametrised by corank 1 map-germs at the singular points. 
We call the direction of the 1-dimensional tangent space of the surface at the singular point $\rm p$ 
the tangential direction. If the tangential direction is spacelike or timelike,  
we can choose a system of coordinates so that $E\ne0 $ at $\rm p$ and compute $m_u({\rm p})$ 
using Proposition \ref{prop:mult_formula}. Further work is needed 
to extend our results to the case when the tangential direction is lightlike 
and to corank 2 singular surfaces as $E=F=G=0$ in both cases at the point of interest.

Consider the case of a surface in $\mathbb R^3_1$ with a cross-cap singularity (\cite{DiasTari}). Then the BDE of the lines of principal 
curvature extends to the singular point by considering equation (\ref{eq:principalLD}). 

\begin{prop}\label{prop:cross-capMul}
Let $M\subset \mathbb R^3_1$ be a surface with a cross-cap singularity at ${\rm p}$ and suppose that the tanegential 
direction at ${\rm p}$ is spacelike or timelike. Then $m_u({\rm p})=1$.
\end{prop}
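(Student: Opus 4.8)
The plan is to reduce the computation of $m_u({\rm p})$ to a one-jet calculation by exploiting that $E(q)\neq 0$ (since the tangential direction is spacelike or timelike, we may choose coordinates so the first coordinate axis lies along that direction) and invoking Proposition \ref{prop:mult_formula} together with Remark \ref{rem:MultSmooth}. First I would write the cross-cap in a normal form. A cross-cap in $\mathbb R^3_1$ with spacelike (resp. timelike) tangential direction can be parametrised, after a Lorentzian change of coordinates in the ambient space and a reparametrisation of the source, by a germ of the shape $\bx(u,v)=(u,\,uv + O(\ge 3),\,v^2 + O(\ge 3))$ with the plane $w=0$ spacelike (resp. by the analogous form with the roles of the spacelike/timelike coordinate swapped). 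The point is that $\bx_u(0)$ is a spacelike (resp. timelike) non-null vector, so $E(0)\neq 0$, and hence by Proposition \ref{prop:mult_formula} we have $m_u({\rm p})=m(\mathfrak b,\mathfrak c)$ where $\mathfrak b=E\bar n-G\bar l$ and $\mathfrak c=E\bar m-F\bar l$, computed with the extended coefficients $\bar l,\bar m,\bar n$ of equation (\ref{eq:principalLD}).

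Next I would compute the one-jets of $\mathfrak b$ and $\mathfrak c$ at the origin for the normal form above. Using $\bar l=\langle\bx_u\times\bx_v,\bx_{uu}\rangle$, $\bar m=\langle\bx_u\times\bx_v,\bx_{uv}\rangle$, $\bar n=\langle\bx_u\times\bx_v,\bx_{vv}\rangle$ and $E=\langle\bx_u,\bx_u\rangle$, $F=\langle\bx_u,\bx_v\rangle$, $G=\langle\bx_v,\bx_v\rangle$, one reads off that at the origin $\bx_u\times\bx_v$ is (up to sign and the Minkowski cross product) the normal direction transverse to the cross-cap, $E(0)=\pm 1$, $F(0)=0$, $G(0)=0$, and the second derivatives $\bx_{uu}(0)$, $\bx_{uv}(0)$, $\bx_{vv}(0)$ are fixed by the normal form (with $\bx_{vv}(0)$ the only "large" one, in the $w$-direction). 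A direct expansion then gives $j^1\mathfrak b$ and $j^1\mathfrak c$ as explicit linear forms in $(u,v)$ whose coefficients involve only the $2$- and $3$-jet of the map-germ. The claim is that the map-germ $(\mathfrak b,\mathfrak c)$ is $1$-$\mathcal K$-finitely determined with $j^1\mathfrak b$ and $j^1\mathfrak c$ linearly independent, so that $m(\mathfrak b,\mathfrak c)=m(j^1\mathfrak b,j^1\mathfrak c)=1$ by Remark \ref{rem:MultSmooth}.

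The main obstacle — and the step deserving care — is verifying that the two one-jets are genuinely independent, i.e.\ that the Jacobian of $(\mathfrak b,\mathfrak c)$ at the origin is nonzero. This is where the defining property of a cross-cap (as opposed to a more degenerate corank $1$ singularity) must be used: the nondegeneracy of the cross-cap forces the relevant $3$-jet coefficient of the parametrisation (the one producing the "$v^2$" together with the "$uv$" terms) to be nonzero, and this is precisely what makes the determinant $\eta_1\wedge\eta_2$ of the two linear forms nonzero. I would therefore carry out the computation keeping the cross-cap normal-form constants symbolic, check that the determinant equals a nonzero multiple of that cross-cap nondegeneracy constant, and conclude $m_u({\rm p})=1$. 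One should also note that the sign of $E(0)$ (i.e.\ whether the tangential direction is spacelike or timelike) only changes the overall sign of the one-jets and hence does not affect the conclusion, so the two cases are handled simultaneously.
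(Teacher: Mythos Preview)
Your approach is essentially the same as the paper's: use $E(q)\neq 0$ to get $m_u({\rm p})=m(\mathfrak b,\mathfrak c)$, then compute the $1$-jets of $\mathfrak b,\mathfrak c$ in a cross-cap normal form and check they are linearly independent. The paper simply cites \cite{DiasTari} for the explicit result $j^1\omega_P=(0,-\tfrac12 x,y)$ rather than redoing the computation, so its proof is one line.

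One minor wobble: you attribute the nondegeneracy of the Jacobian to a ``$3$-jet coefficient'' of the parametrisation, but the cross-cap condition is a $2$-jet condition (the presence of the $uv$ and $v^2$ terms in the normal form). What is true is that computing the $1$-jet of $(\mathfrak b,\mathfrak c)$ involves the $3$-jet of the parametrisation in general, but the linear independence of $j^1\mathfrak b$ and $j^1\mathfrak c$ here follows already from the $2$-jet normal form $(u,uv,v^2)$ --- the higher-order terms only contribute to higher-order terms of $\mathfrak b,\mathfrak c$. So in your write-up you should be careful to say that the nonvanishing determinant comes from the defining $2$-jet of the cross-cap, not from any $3$-jet data.
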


\begin{proof}
One can choose a parametrisation 
of the cross-cap so that 
the 1-jet of the coefficients of the BDE of the lines of principal curvature is given by $j^1\omega_P=(0,-1/2x,y)$; see \cite{DiasTari}. Consequently, 
$m_u({\rm p})=1$. This means that there is only one umbilic point at the cross-cap. 
\end{proof}

Proposition \ref{prop:cross-capMul} is also valid for surfaces 
in $\mathbb R^3$ with a cross-cap singularity. Their lines of principal curvature are studied in \cite{GarciaSotoGut_CrossCap,Tari_Crosscap}.

\section{Appendix}\label{sec:appendix}

All the coefficients of the BDE of the lines of principal curvature vanish at an umbilic point. 
This type of BDEs is studied as follows. 
To a BDE (\ref{eq:genBDE})
is associated a surface 
$$
\mathcal M=\{(u,v,[p:q]\in U\times \mathbb RP^1: a(u,v)p^2+b(u,v)pq+c(u,v)q^2=0\}.
$$

The surface $\mathcal M$ is regular along the exceptional fibre $(0,0)\times \mathbb RP^1$ if, and only 
if, the discriminant function $\delta=b^2-4ac$ has a Morse $A_1^{\pm}$-singularity  at the origin (see \cite{bdes}).
Suppose this is the case. 
The bi-valued direction field determined by the BDE in $U$ lifts to a single field $\xi$ on $\mathcal M$. 
We write $F(u,v,p)=a(u,v)p^2+b(u,v)p+c(u,v)$ in the chart $q=1$ (the chart $p=1$ also needs to be considered). 
Then one can take
$$
\xi =F_p\frac{\partial}{\partial u}+pF_p \frac{\partial}{\partial v}-
(F_u+pF_v)\frac{\partial}{\partial p}.
$$ 

The exceptional fibre $(0,0)\times \mathbb RP^1$ is an integral curve of $\xi$ and the 
singularities of $\xi$ on this fibre are the roots of the cubic polynomial
$$
\phi(p)=F_u(0,0,p)+pF_v(0,0,p).
$$

Suppose that $\phi$ has simple roots. Then, at a given root ${p}$ of $\phi$, the singularity 
of $\xi$ is a saddle (resp. node) if $\phi'(p)\alpha(p)>0$ (resp. $<0$), where
$$
\alpha(p)=
a_v(0,0)p^2+ (\frac{1}{2}b_v(0,0)+a_u(0,0))p+
\frac{1}{2}b_u(0,0).
$$

Suppose that the discriminant of the BDE has a Morse $A_1^+$-singularity and that 
the roots of $\phi$ are simple.
Then, when $\phi$ has 3 roots, they are either all of type saddle and the umbilic is called {\it star}, or two are saddles and one is a node and the umbilic is called {\it monstar}. When $\phi$ has one root, it is a saddle and the umbilic is called {\it lemon}. 

When $\delta$ has an $A_1^-$-singularity and the roots of $\xi$ are simple, they are all of type saddle or node and we get all the 
possible combinations in Figure \ref{fig:TimelikeUmb}.
The Morse singularity type $A_1^+$ or $A_1^-$ together with the number and type of the singularities of $\xi$ determine the topological configuration of the BDE.
Furthermore, the configuration is completely determined by the 1-jets of the coefficients of the BDE at the umbilic point 
(\cite{bdes}), see Figure \ref{fig:Darboux} and Figure \ref{fig:TimelikeUmb}.  
\begin{figure}[h!]
	\begin{center}
		\includegraphics[scale=0.6]{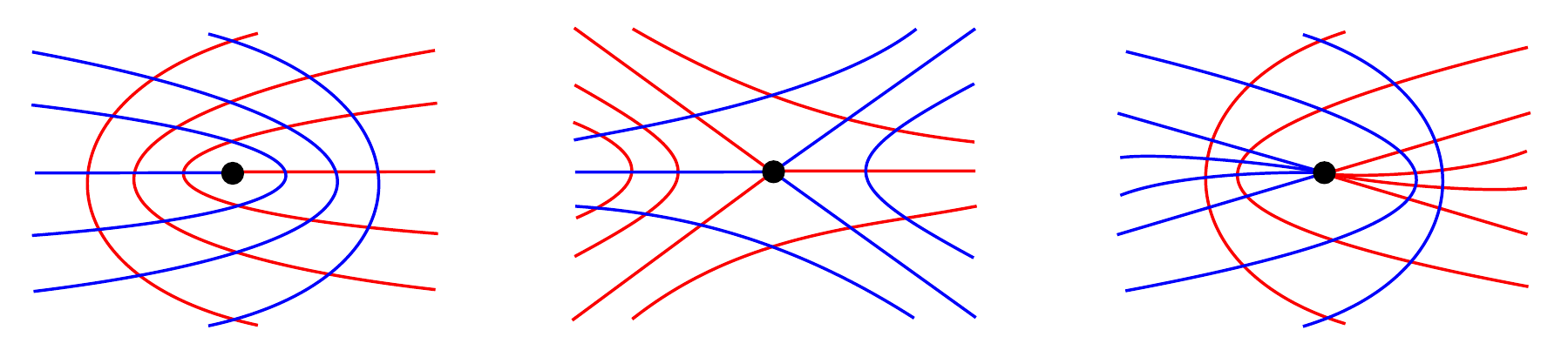}
		\caption{Generic configurations of the lines of principal curvature at a spacelike umbilic point: 
			lemon (first figure), star (second figure), monstar (third figure).}
		\label{fig:Darboux}
	\end{center}
\end{figure}
\begin{figure}[h!]
	\begin{center}
		\includegraphics[width=14cm,height=2.5cm]{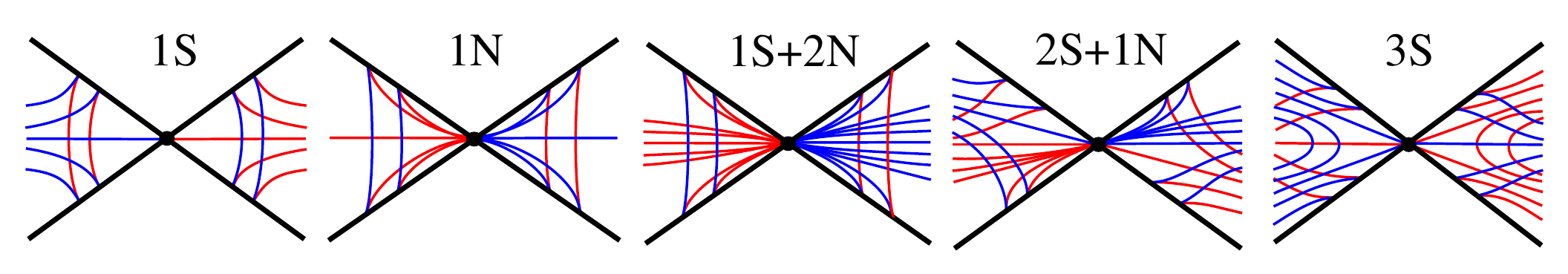}
		\caption{Generic configurations of the lines of principal curvature at a timelike umbilic point (S for a saddle and N for a node singularity of $\xi$).}
		\label{fig:TimelikeUmb}
	\end{center}
\end{figure}

When one of the coefficients of the BDE (\ref{eq:genBDE}) is not zero at the origin, a similar approach to the above is used to study such BDEs; see \cite{davbook}. The topologically stable phenomena are those where $M$ is a regular surface and the vector field $\xi$ is regular or 
has a saddle, node or focus singularity (we also require that the separatrices of the saddle or the node are not tangent to the 
kernel of the projection to the plane). The singularities of the BDE corresponding to the saddle/node/focus of $\xi$ are called {\it well-folded singularities}.


\subsection{Partition of the space of cubic forms}\label{ssec:config}

The analysis at a spacelike umbilic point  is the same as that on surfaces in the Euclidean space.
We can choose a suitable coordinate system so that 
$M$ is the graph of a function $z=f(x,y)$, so it is 
parametrised locally by $(x,y)\to {\bx}(x,y)$, with 
\begin{eqnarray}\label{parm:spacelikeUmb}
	{\bx}(x,y)=(x,y,\frac{\kappa}{2}(x^2+y^2)+C(x,y)+O_4(x,y)), 
\end{eqnarray}
where $(x,y)$ is in some neighbourhood $U$ of the origin in $\mathbb R^2$, $C$ is a homogeneous cubic.
The cubic form
$C(x,y)$ can be taken as  the real part of $z^3+\beta z^2\overline{z}$, with $z=x+iy$ and $\beta=s+it$ (see for example \cite{bruce84,porteous}). 
Then, 
\begin{equation}\label{eq:Cubicspace}
	C(x,y)=(1+s)x^3-tx^2y+(s-3)xy^2-ty^3.
\end{equation}

The following result is known (see for example \cite{bruce84,bdes,porteous,sotogut}). We include it for completion. The discriminant $\delta$ is that of BDE (\ref{eq:principalBDE}) and the cubic $\phi$ and the quadratic $\alpha$ are as above.

\begin{prop}\label{prop:spacelikeumbPart}
	Suppose that ${\rm p}$ is a spacelike umbilic point and that $M$ is parametrised locally as 
in	$(\ref{parm:spacelikeUmb})$ at ${\rm p}$. Then the $\beta$-plane is stratified by the following curves:
	
-- The discriminant has a singularity more degenerate than $A_1^{+}$: 
	$s^2+t^2=9$.
	
-- $\phi$ has a repeated root: $\beta(\theta)=-3(2e^{2i\theta}+e^{-4i\theta}), \theta\in \mathbb R$ {\rm(}the outer hypocycloid in {\rm Figure  \ref{fig:PartitionA_1^+})}. 
	
-- $\alpha$ and $\phi$ have a common root: $s^2+t^2=9$.
\end{prop}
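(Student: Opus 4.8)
The plan is to reduce the whole statement to the $1$-jets at ${\rm p}$ of the coefficients of BDE (\ref{eq:principalBDE}), since the $2$-jet of its discriminant $\delta$ and the polynomials $\phi,\alpha$ all depend only on these $1$-jets. Taking $M$ as the graph (\ref{parm:spacelikeUmb}) with $C$ as in (\ref{eq:Cubicspace}) and identifying $(u,v)$ with $(x,y)$, a routine computation gives $E=1+O(2)$, $G=1+O(2)$, $F=O(2)$ and
$$
l=\kappa+6(1+s)x-2ty+O(2),\qquad m=-2tx+2(s-3)y+O(2),\qquad n=\kappa+2(s-3)x-6ty+O(2)
$$
(the normalising factor in $l,m,n$ is $1+O(2)$ and does not affect these $1$-jets). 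Writing $a=Fn-Gm$, $b=En-Gl$, $c=Em-Fl$ for the coefficients of (\ref{eq:principalBDE}), this yields
$$
a=2tx+(6-2s)y+O(2),\qquad b=-(4s+12)x-4ty+O(2),\qquad c=-2tx+(2s-6)y+O(2),
$$
so $j^1c=-j^1a$ (the $1$-jet version of the identity $Gc=bF-aE$ from the proof of Lemma \ref{lem:2_cond_umb}); in particular $\kappa$ drops out and everything below is a computation in $(s,t)$.

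For the first item, since $j^1c=-j^1a$ the $2$-jet of $\delta=b^2-4ac$ equals $(j^1b)^2+4(j^1a)^2$, which is manifestly a sum of two squares; a short computation shows its symmetric matrix has determinant $256(s^2+t^2-9)^2$. Hence when $s^2+t^2\neq 9$ this quadratic form is positive definite and $\delta$ has an $A_1^+$-singularity at ${\rm p}$, whereas on $s^2+t^2=9$ the $2$-jet of $\delta$ is a nonzero perfect square and the singularity is more degenerate than $A_1^+$. This gives the first stratum.

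For the second item, substituting the above $1$-jets into $\phi(p)=F_u(0,0,p)+pF_v(0,0,p)$ gives $\phi(p)=-2\big((s-3)p^3+tp^2+(s+9)p+t\big)$. The key step is the substitution $p=\tan\psi$: after multiplying by $\cos^3\psi$ and expanding in multiple angles, $\phi$ becomes a nonzero multiple of $h(\psi):=3\sin 3\psi+s\sin\psi+t\cos\psi$, and since $h(\psi+\pi)=-h(\psi)$ this describes uniformly all roots of $\phi$ in $\mathbb{RP}^1$, including the one at infinity. Now $\phi$ has a repeated root if, and only if, $h(\psi)=h'(\psi)=0$ for some $\psi$; for fixed $\psi$ this is a linear system in $(s,t)$ with coefficient matrix $\left(\begin{array}{cc}\sin\psi&\cos\psi\\ \cos\psi&-\sin\psi\end{array}\right)$ of determinant $-1$, whose unique solution is $s=-6\cos 2\psi-3\cos 4\psi$, $t=-6\sin 2\psi+3\sin 4\psi$, that is, $\beta=s+it=-3(2e^{2i\psi}+e^{-4i\psi})$. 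This is precisely the stated parametrisation of the outer hypocycloid.

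For the third item, the same $1$-jets give $\alpha(p)=-2\big((s-3)p^2+(s+3)\big)$. Reducing $\phi$ modulo $\alpha$ (equivalently, computing the resultant of $\phi$ and $\alpha$) shows that $\phi$ and $\alpha$ share a root if, and only if, $p=t/(s-3)$ and $t^2+(s+3)(s-3)=0$, i.e. $s^2+t^2=9$; the degenerate case $s=3$ yields a common root at infinity and again lies on this circle. The argument is otherwise mechanical once the $1$-jets of $a,b,c$ are in hand; the only genuine obstacle is that bookkeeping together with spotting that $p=\tan\psi$ linearises the repeated-root condition for $\phi$ and thereby produces the hypocycloid parametrisation at once.
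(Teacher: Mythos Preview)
Your argument is correct and follows the paper's own approach: compute the $1$-jet of the BDE (which the paper records as $(\ref{eq:1jetBDEspace})$, agreeing with yours up to a factor $2$), write down $\phi$ and $\alpha$, and read off the three conditions by direct calculation; the paper omits all details, and your computations---particularly the substitution $p=\tan\psi$ that turns the repeated-root condition into a linear system in $(s,t)$ and produces the hypocycloid parametrisation at once---fill those details in nicely.

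One minor slip in item~3: your sentence on the degenerate case $s=3$ is not quite right, since $s=3$ lies on the circle $s^2+t^2=9$ only when $t=0$. For $s=3$ and $t\ne 0$ the polynomial $\alpha$ becomes the nonzero constant $-12$, hence has no finite root; the drop in degree of $\phi$ does give a root at $[1:0]\in\mathbb{RP}^1$, but working in the other affine chart one checks that the corresponding $\tilde\alpha(0)$ equals (a nonzero multiple of) $b_v=-4t\ne 0$, so there is still no common root. Equivalently, the Sylvester resultant of $\phi$ and $\alpha$ (taken with their formal degrees $3$ and $2$) is $36(s-3)(s^2+t^2-9)$ up to a constant, and the spurious factor $s-3$ must be discarded for exactly this reason. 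The conclusion $s^2+t^2=9$ is therefore correct; only the justification of this boundary case needs tightening.
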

\begin{figure}[h!]
	\begin{center}
		\includegraphics[scale=0.4]{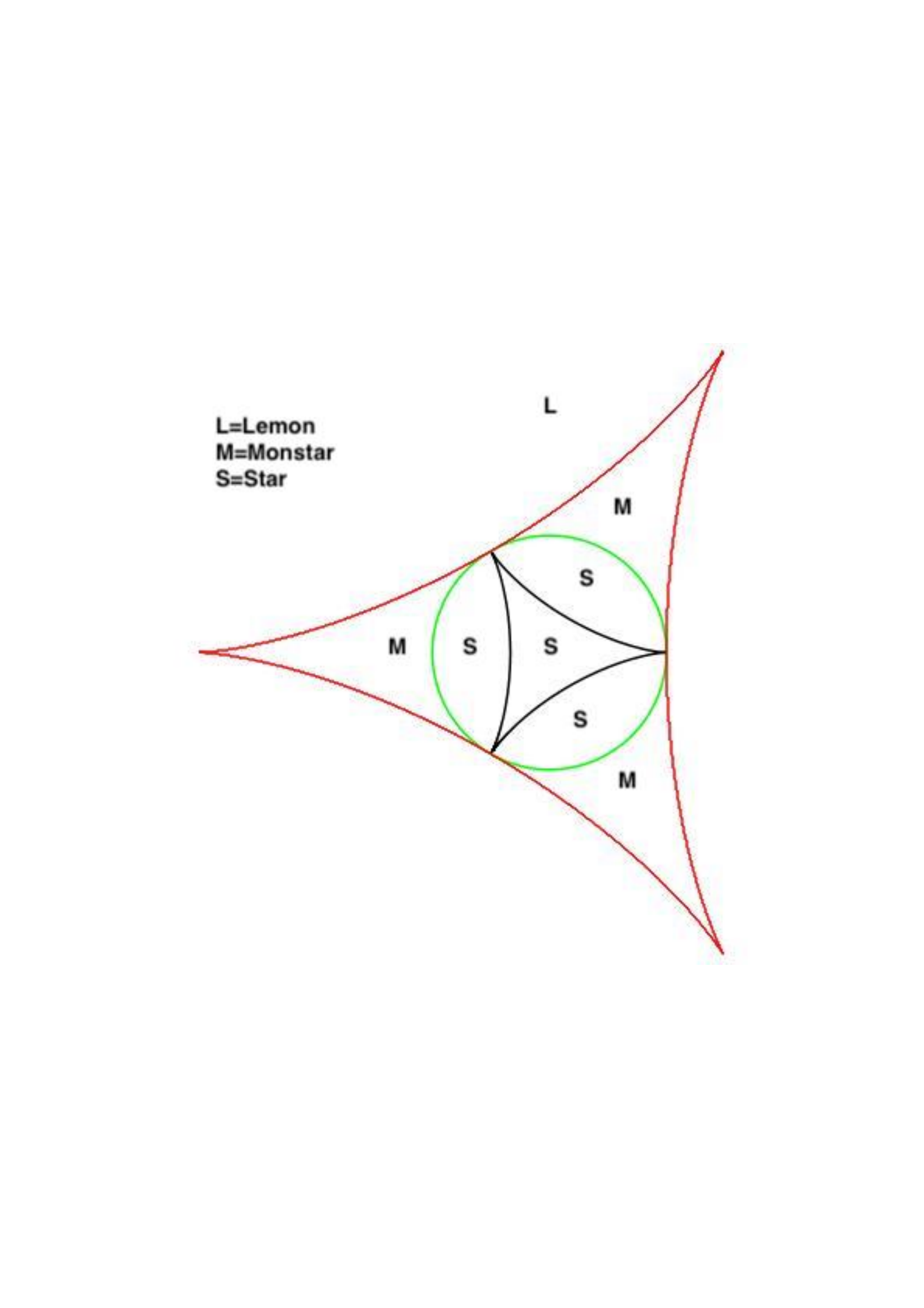}
		\caption{Partition of the $\beta$-plane by the exceptional curves.}
		\label{fig:PartitionA_1^+}
	\end{center}
\end{figure}

\begin{proof}The 1-jet of the BDE of the lines of principal curvature is given by 
	\begin{equation}\label{eq:1jetBDEspace}
		j^1\omega=(tx+(3-s)y,-2((s+3)x+ty),-(tx+(3-s)y)).
	\end{equation}

	Then 
	$\phi(t)=(3-s)p^3-tp^2-(9+s)p-t$ and
	$\alpha(t)=(3-s)p^2-3-s$ and the result follows by straightforward calculations.
	The configurations of the lines of principal curvature are stable and are all topologically equivalent 
	for $C$ in a given open stratum in the $\beta$-plane 
	(Figure \ref{fig:Darboux} and \mbox{Figure \ref{fig:PartitionA_1^+}}).
\end{proof}

At a timelike umbilic point on a generic surface, the $LPL$ is a curve which has a Morse singularity 
of type $A_1^-$ (a node). 
There are two principal directions at each point on one side of the $LPL$ and none on the other.
The generic topological models of the configurations of the lines of principal curvature at a timelike umbilic point are as in Figure \ref{fig:TimelikeUmb}. We give here a characterisation of a timelike umbilic in terms of the cubic form in a parametrisation of the surface, in an analogous way to the spacelike case. 
We take the surface locally as the graph a function $y=f(x,z)$, so it can be parametrised by $(x,z)\mapsto {\bx}(x,z)$ with 
\begin{eqnarray}\label{parm:timelikeUmb}
	{\bx}(x,z)=(x,\frac{\kappa}{2}(x^2-z^2)+C(x,z)+O_4(x,z),z).
\end{eqnarray}

A linear factor $ax+bz$ of the cubic form $C$ is said to be a spacelike (resp. timelike or lightlike)  root  if the tangent direction to the curve
$\gamma = \{ \bx (x,z) : ax+bz = 0\}$
at the origin is spacelike (resp. timelike or lightlike). Therefore, a linear factor $ax+bz$ of $C$ is a spacelike  (resp. timelike or lightlike)  root if, and only if, $b^2 > a^2$ (resp. $b^2<a^2$ or $b^2=a^2$).

We have the following result. (Observe that Lorentizian changes of coordinates and dilatations preserve the lines of principal curvature of a given surface.)

\begin{prop}\label{prop:cubic}
Suppose that the cubic form $C(x,z)$ in \mbox{\rm (\ref{parm:timelikeUmb})} is not identically zero. Then it can be reduced by a Lorentzian change of coordinates in $\mathbb R^3_1$ and a dilatation to one
	of the following forms
	$$\begin{array}{rl}
		{\rm (i)}& x(x^2 +s xz+t z^2),\\
		{\rm (ii)}&  z(t x^2 +s xz+z^2),\\
		{\rm (iii)}& (x\pm z)(x^2 +s xz+tz^2),\\
		{\rm (iv)}& x^2 z,\\
	\end{array}
	$$
	with $s,t \in \mathbb R$.
\end{prop}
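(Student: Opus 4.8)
The reduction rests on identifying the group that acts on the cubic form $C$ at the umbilic. First I would observe that a Lorentzian change of coordinates of $\mathbb{R}^3_1$ fixing the origin and carrying the surface $(\ref{parm:timelikeUmb})$ to another graph $y=\tilde f(x,z)$ with $j^1\tilde f=0$ must send the tangent plane of $M$ at ${\rm p}$ — the $(x,z)$-plane, which is also the tangent plane of the new graph at its umbilic — to itself, and hence induces on it a linear isometry of the restricted metric $x^2-z^2$, i.e.\ an element of $O(1,1)$; no mixing with the normal $y$-direction occurs since the tangent plane is non-degenerate. Composing with dilatations of $\mathbb{R}^3_1$ (which rescale $C$ by a positive factor) and with $-\mathrm{Id}\in O(1,1)$ (which replaces the odd form $C$ by $-C$), the net effect is that we may replace $C$ by $\mu\,(C\circ L)$ for an arbitrary $L\in O(1,1)$ and an arbitrary $\mu\in\mathbb{R}^{*}$. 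So the proposition becomes a normal-form problem for a non-zero binary cubic under this action.

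The invariant that organises the cases is the causal type of the linear factors of $C$: $px+qz$ is spacelike, timelike or lightlike according as $q^{2}-p^{2}>0$, $<0$ or $=0$, and $O(1,1)$ preserves this. A boost fixes each of the two lightlike lines $x\pm z=0$ and acts transitively on the timelike directions and, separately, on the spacelike directions, while $z\mapsto-z\in O(1,1)$ interchanges $x-z$ and $x+z$ and preserves causal type; hence any timelike (resp.\ spacelike, resp.\ lightlike) linear factor of $C$ can be moved by an element of $O(1,1)$ to a multiple of $x$ (resp.\ of $z$, resp.\ of $x-z$ or $x+z$). The plan is then to factor $C$ over $\mathbb{R}$, split into the four possibilities — three distinct real roots; one real root and a complex-conjugate pair; a double and a simple real root; a triple real root — and in each case choose a linear factor, carry it by $O(1,1)$ to the matching coordinate form, and rescale the complementary quadratic to be monic in $x^{2}$ (giving form (i), or form (iii) when the chosen factor is lightlike) or in $z^{2}$ (giving form (ii)). A repeated factor is pulled out once and what remains is again a quadratic of the required shape, producing forms (i)--(iii) with degenerate $s,t$; the special configuration where the repeated factor is Lorentz-orthogonal to the simple factor yields the form $x^{2}z$ of (iv). Since the causal type of the chosen factor dictates which of (i)--(iv) one lands in, the four forms exhaust the possibilities.

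The delicate point — the step I expect to cost the most work — is the rescaling-to-monic: the leading coefficient of the complementary quadratic (the coefficient of $x^{2}$, resp.\ of $z^{2}$) can vanish, and this happens exactly when a second root of $C$ also lies on the coordinate line used as the target, so the quadratic then fails to be monic in the needed variable. The remedies are to move instead a different root of the same causal type to the target (when one is available); or, for a lightlike factor — whose line is merely scaled by a boost — to follow the normalising map by a small additional boost, which keeps that factor fixed up to scale but pushes the offending root off the line; or to recognise the configuration directly as one of the listed degenerate forms. All such problematic configurations are of low codimension, so an alternative and perhaps cleaner route, which is the one I would ultimately take to make the argument watertight, is to run the reduction by explicit computation: starting from $C=a_{30}x^{3}+a_{31}x^{2}z+a_{32}xz^{2}+a_{33}z^{3}$, apply the boost $\left(\begin{array}{cc}\cosh\theta & \sinh\theta\\ \sinh\theta & \cosh\theta\end{array}\right)$ together with the relevant reflections and scalings, and in each case solve for the parameter $\theta$ that annihilates the appropriate coefficient, making the passage to (i)--(iv) completely transparent.
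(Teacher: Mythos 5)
Your proposal is correct and is essentially the paper's own argument: the authors likewise apply a hyperbolic rotation in the $(x,z)$-plane to move a root of known causal type onto a coordinate axis (annihilating $A_4$ when a timelike root exists, $A_1$ when only spacelike roots do) and then rescale, which is precisely the explicit-$\theta$ computation you name as your watertight route. The one caveat --- shared with the paper's proof --- is the orthogonal configuration with a \emph{double spacelike} and a simple timelike factor, which normalises to $xz^{2}$ rather than $x^{2}z$ and is not $O(1,1)$-equivalent to form (iv); it is absorbed only via the non-Lorentzian swap $(x,z)\mapsto(z,x)$ that the authors justify in a later remark.
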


\begin{proof} We write $C(x,z)=a_1x^3+a_2x^2z+a_3xz^2+a_4z^3$ and make the change of coordinates 
	$(X,Y,Z)\mapsto (x,y,z)$ in $\mathbb R^3_1$,  with 
	
	$$
	\left(
	\begin{array}{c}
		x\\
		y\\
		z
	\end{array}
	\right)=
	\left(
	\begin{array}{ccc}
		\cosh\theta&0&-\sinh\theta\\
		0&1&0\\
	-	\sinh\theta&0&\cosh\theta
	\end{array}
	\right)
	\left(
	\begin{array}{c}
		X\\
		Y\\
		Z
	\end{array}
	\right).
	$$
	
	Then
	$C(X,Z)=A_1X^3+A_2X^2Z+A_3XZ^2+A_4Z^3$, with
	{\small
		$$
		\begin{array}{l}
			A_1=a_1\cosh^3\theta-a_2\cosh^2\theta\sinh\theta+a_3\cosh\theta\sinh^2\theta-a_4\sinh^3\theta,\\
			A_2=a_2\cosh^3\theta-(3a_1+2a_3)\cosh^2\theta\sinh\theta+(2a_2+3a_4)\cosh\theta\sinh^2\theta-a_3\sinh^3\theta,\\
			A_3=a_3\cosh^3\theta-(2a_2+3a_4)\cosh^2\theta\sinh\theta+(3a_1+2a_3)\cosh\theta\sinh^2\theta-a_2\sinh^3\theta,\\
			A_4=a_4\cosh^3\theta-a_3\cosh^2\theta\sinh\theta+a_2\cosh\theta\sinh^2\theta-a_1\sinh^3\theta.
		\end{array}
		$$
	}

	If $C$ has a timelike root, we can 
	choose $\theta$ so that the root is tangent to the curve $\{\bx (X,Z) : X = 0\}$, i.e., we can set $A_4=0$.
	We can then rescale and obtain the reduced forms (i), (iv) or
$XZ (dX+Z)$. For the later, by another choice of $\theta$, we can rewrite the cubic as in (i), (ii) or (iii) when $d^2 > 1$, $d^2 < 1$ or $d^2 = 1$, respectively.
	
	If $C$ has a spacelike root and no timelike roots, we choose $\theta$ so that the root
	is tangent to the curve $\{\bx (X,Z) : Z = 0\}$, i.e.,
 we can set $A_1=0$. We can then rescale and obtain the reduced form (ii) as  $A_4 \neq 0$.
	
	If one root of $C$ is lightlike, then $u+v$ or $u-v$ is a factor of $C$. We then 
	rescale to get the reduced form (iii).

\end{proof}

We identify a BDE $\omega$ with its coefficients and write $\omega=(a,b,c)$.

\begin{prop}\label{prop:1jet_bde_timelike_umb}
	The 1-jet of the BDE of the lines of principal curvature at a timelike umbilic point with the cubic $C$ equivalent to 
	one of the reduced forms in {\rm Proposition \ref{prop:cubic}(i)-(iv)} are equivalent, respectively,  to
	$$
	\begin{array}{cl}
		{\rm (i)}&(s x+t z,\, (3+t )x+s z,\, s x+t z),\\
		{\rm (ii)} &(t x+s z,\, s x+(3+t )z,\, t x+s z),\\
		{\rm (iii)}&((s \pm 1)x+(t \pm s )z,\, (3\pm s +t )x+(s \pm 3t \pm 1)z,\, (s \pm 1)x+(t \pm s )z),\\
		{\rm (iv)}&(x,z,x).
	\end{array}
	$$
	The 1-jets {\rm (i)} and {\rm (ii)} are equivalent by the change of coordinates $(x,z)\mapsto (z,x)$. 
	
	\smallskip
	The exceptional curves in the $(s,t)$-plane for the case {\rm (i) (}and {\rm (ii))} are:
	
-- The discriminant has a singularity more degenerate than $A_1^{-}$: $s^2-t^2-3t=0$.
	
-- $\phi$ has a repeated root:
	$8s^4-\frac{61}{4}s^2t^2+8t^4-39s^2t+36t^3-9s^2+54t^2+27t=0.$
	
-- $\alpha$ and $\phi$ have a common root: $(s+t+1)(s-t-1)=0$.
	
	The partition of the $(s,t)$-plane by these curves is as in {\rm Figure \ref{fig:Part_TimelikeUmb}}. In the open regions of this partition, the BDE of the lines of 
	principal curvature is topologically determined by its 1-jet and has one of the configurations in {\rm Figure \ref{fig:TimelikeUmb}}.
	
	\smallskip
Case {\rm (iii)} does not occur on generic surfaces as $\alpha$ and $\phi$ have a common root. They have one common root unless $(s,t)=(0,-1)$ in which case they have two roots in common. 
	The 
	exceptional curves in the $(s,t)$-plane in this case are:
	
	- The discriminant has a singularity more degenerate than $A_1^{-}$: $(t-1)(1+t \mp s)=0.$
	
	- $\phi$ has a repeated root: $3s^2-4t^2-4t-4=0.$
	
	\smallskip
	In case {\rm (iv)}, the discriminant has a singularity $A_1^-$ and the lines of principal curvature has the configuration {\rm (1S)} in {\rm Figure \ref{fig:TimelikeUmb}}.
\end{prop}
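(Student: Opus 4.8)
The plan is to reduce the whole statement to one computation of the $1$‑jet of $(\mathfrak{a},\mathfrak{b},\mathfrak{c})$ on the graph parametrisation, and then to specialise. Take $\bx(x,z)=(x,f(x,z),z)$ with $f=\frac{\kappa}{2}(x^2-z^2)+C(x,z)+O_4(x,z)$ and $C$ a homogeneous cubic. Then $\bx_x=(1,f_x,0)$, $\bx_z=(0,f_z,1)$, so $E=1+f_x^2$, $F=f_xf_z$, $G=f_z^2-1$; since $f_x,f_z$ vanish at the origin we get $E=1+O_2$, $F=O_2$, $G=-1+O_2$, and the patch is Lorentzian. Because $\bx_{xx},\bx_{xz},\bx_{zz}$ have only a second component and equation $(\ref{eq:principalBDE})$ is homogeneous in $\bar l,\bar m,\bar n$, we may take $\bar l=-f_{xx}$, $\bar m=-f_{xz}$, $\bar n=-f_{zz}$, with $j^1f_{xx}=\kappa+C_{xx}$, $j^1f_{xz}=C_{xz}$, $j^1f_{zz}=-\kappa+C_{zz}$. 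Substituting into $\mathfrak{a}=F\bar n-G\bar m$, $\mathfrak{b}=E\bar n-G\bar l$, $\mathfrak{c}=E\bar m-F\bar l$ and discarding $O_2$ terms gives
\[
j^1\omega_P = -\bigl(C_{xz},\; C_{xx}+C_{zz},\; C_{xz}\bigr).
\]
Feeding in the four normal forms of Proposition \ref{prop:cubic}, and using that a BDE is considered up to a nonzero scalar and linear coordinate changes, one obtains exactly the four $1$‑jets in the statement; the substitution $(x,z)\mapsto(z,x)$ visibly interchanges the forms (i) and (ii), so (ii) needs no separate analysis.

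It then remains to read off, from the $1$‑jet $(a,b,c)$, the three objects of the Appendix: the discriminant $\delta=b^2-4ac$, the cubic $\phi(p)=a_vp^3+(a_u+b_v)p^2+(b_u+c_v)p+c_u$ and the quadratic $\alpha(p)=a_vp^2+(\frac12 b_v+a_u)p+\frac12 b_u$, where for a linear form $\ell=\ell_ux+\ell_vz$ I write $\ell_u,\ell_v$ for its coefficients. Since $j^2\delta$ is a binary quadratic form, the phrase \emph{``$\delta$ has a singularity more degenerate than $A_1^-$''} means exactly that this form is degenerate, i.e. that its discriminant (in $x,z$) vanishes. For case (i), $j^2\delta=\bigl((3+t)x+sz\bigr)^2-4(sx+tz)^2$ is a product of two real linear forms, hence never definite, and it is degenerate precisely on $s^2-t^2-3t=0$; \emph{``$\phi$ has a repeated root''} is $\mathrm{disc}_p\phi=0$, which evaluates to a nonzero constant times $8s^4-\frac{61}{4}s^2t^2+8t^4-39s^2t+36t^3-9s^2+54t^2+27t$; and \emph{``$\alpha$ and $\phi$ have a common root''} is $\mathrm{Res}_p(\phi,\alpha)=0$, which factors as a nonzero constant times $(s+t+1)(s-t-1)$. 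The partition of the $(s,t)$‑plane by these three curves is the one in Figure \ref{fig:Part_TimelikeUmb}.

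For case (iii) one has $a=c$, so $\delta=(b-2a)(b+2a)$; here $b-2a$ is a nonzero multiple of $x+z$, so $\delta$ again splits over $\mathbb R$, its degeneracy locus works out to $(t-1)(1+t\mp s)=0$, and $\mathrm{disc}_p\phi=0$ reduces to $3s^2-4t^2-4t-4=0$. Moreover one checks by direct substitution that $p=\mp 1$ is a root of both $\phi$ and $\alpha$ identically in $(s,t)$ --- this is why case (iii) is non‑generic --- and, after cancelling the corresponding linear factor, the remaining quotients share a further root only at $(s,t)=(0,-1)$. For case (iv) the $1$‑jet $(x,z,x)$ gives $\delta=z^2-4x^2$, an $A_1^-$ point; the zeros of the lift on the exceptional fibre, written in homogeneous coordinates as $a_vp^3+(a_u+b_v)p^2q+(b_u+c_v)pq^2+c_uq^3=q(2p^2+q^2)$, reduce to the single point $[1:0]$, which is a saddle by the sign criterion of the Appendix; together with the $A_1^-$ discriminant this is the configuration (1S) of Figure \ref{fig:TimelikeUmb}.

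Finally, the claim that inside each open region of the resulting partition the BDE of the lines of principal curvature is topologically determined by its $1$‑jet, realising one of the pictures of Figure \ref{fig:TimelikeUmb}, is exactly what the results recalled in the Appendix and in \cite{bdes} give: when $\delta$ has a Morse $A_1^{\pm}$‑singularity and $\phi$ has simple roots, the topological type is fixed by the $1$‑jet, the lift $\xi$ has a singularity at each real root of $\phi$, and its type (saddle or node) is read from the sign of $\phi'(p)\alpha(p)$; the exceptional curves found above are precisely the walls on which one of these genericity conditions fails. I expect the main obstacle to be purely computational: running the discriminant of the cubic $\phi$ and the resultant $\mathrm{Res}_p(\phi,\alpha)$ for cases (i) and (iii) and verifying they factor as stated. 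The one point requiring genuine care is the identification ``more degenerate than $A_1^-$'' $\Longleftrightarrow$ degeneracy of the binary quadratic form $j^2\delta$, together with the observation that in cases (i)--(iii) this quadratic form splits over $\mathbb R$, so it can only be indefinite ($A_1^-$) or degenerate, never definite.
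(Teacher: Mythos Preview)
Your approach is essentially the same as the paper's: compute the $1$-jet of $(\mathfrak a,\mathfrak b,\mathfrak c)$ from the graph parametrisation and then read off $\delta$, $\phi$, $\alpha$. Your unified formula $j^1\omega_P=-(C_{xz},\,C_{xx}+C_{zz},\,C_{xz})$ is correct and is a tidy way to organise the four cases.

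There is one genuine inaccuracy. In case (i) you assert that $\mathrm{Res}_p(\phi,\alpha)$ factors as a nonzero constant times $(s+t+1)(s-t-1)$. It does not: the resultant also vanishes along $s^2-t^2-3t=0$ (and the Sylvester determinant picks up a spurious factor of $t$ from the simultaneous drop in leading coefficients). For instance at $(s,t)=(0,-3)$ one has $\phi=-3p(p^2+1)$ and $\alpha=-6p^2$, with common root $p=0$, yet $(s+t+1)(s-t-1)=-4\neq0$. The paper records this explicitly: $\phi$ and $\alpha$ have a common root iff $(s+t+1)(s-t-1)=0$ \emph{or} $s^2-t^2-3t=0$, the second curve being absorbed into the discriminant-degeneracy stratum. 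So your final list of exceptional curves is right, but the justification via the resultant needs the extra factor; otherwise the step ``$\mathrm{Res}_p(\phi,\alpha)=0 \Leftrightarrow (s+t+1)(s-t-1)=0$'' is false as stated. The same care is needed in case (iii), where the paper again notes that part of the common-root locus coincides with the degeneracy locus of $\delta$.
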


\begin{proof}
	The 2-jet of the discriminant of the BDE with a cubic as in Proposition \ref{prop:cubic}(i) is, up to a scalar multiple,
	$j^2\delta=((2s+t+3)x+(s+2t)z)((2s-t-3)x+(2t-s)z)$ and this has a singularity more degenerate than $A_1^{-}$ 
	if, and only if, $s^2-t^2-3t=0$ (blue hyperbola in Figure \ref{fig:Part_TimelikeUmb}, first and last figure).
	
	We  have 
	$\phi=t p^3+2s p^2+(2t +3)p+s $ and its discriminant is as stated in the proposition 
	(red curve in Figure \ref{fig:Part_TimelikeUmb}, second and last figure). 
	
	We also have $\alpha=2t p^2+3s p+t+3$ and $\phi$ and $\alpha$ have a common root  if, and only if, 
	$(s+t+1)(s-t-1)=0$ (green  curve in Figure \ref{fig:Part_TimelikeUmb}, first and last figure), 
	or $s^2-t^2-3t=0$, which is where $\delta$ has a degenerate singularity.
	
	\begin{figure}[h]
		\centering
		\includegraphics[scale=0.2]{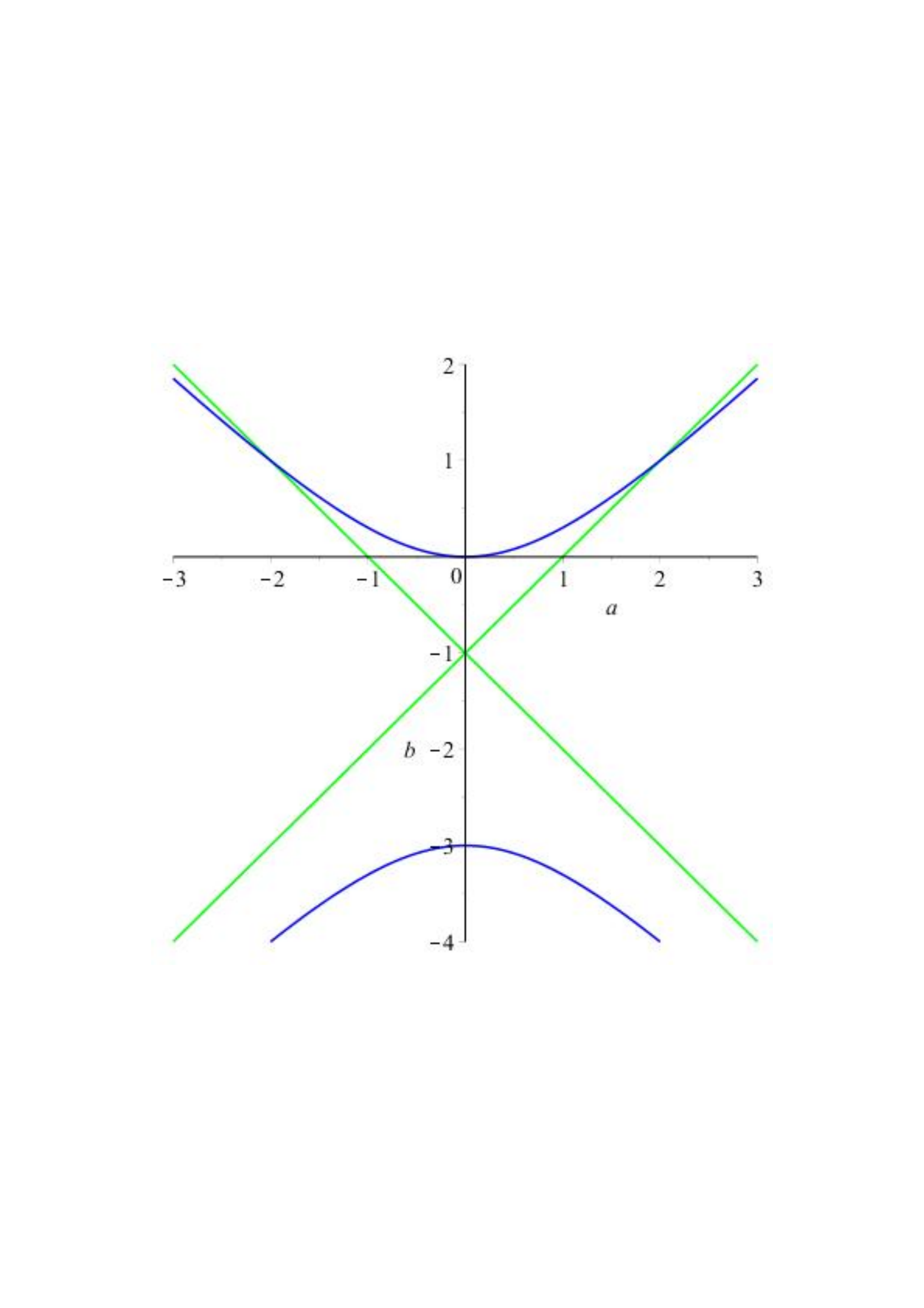}\quad
		\includegraphics[scale=0.2]{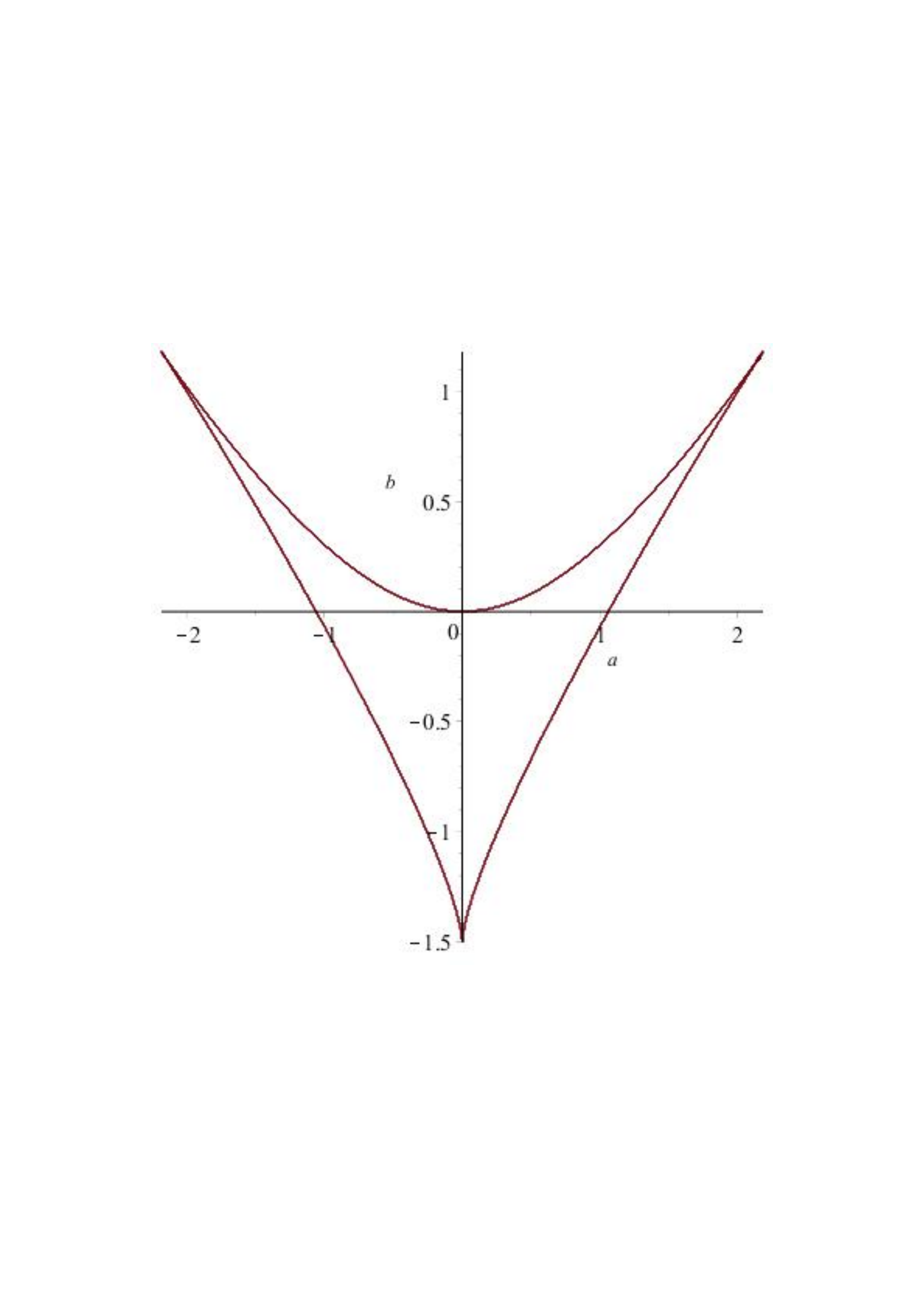}\quad 
		\includegraphics[scale=0.17]{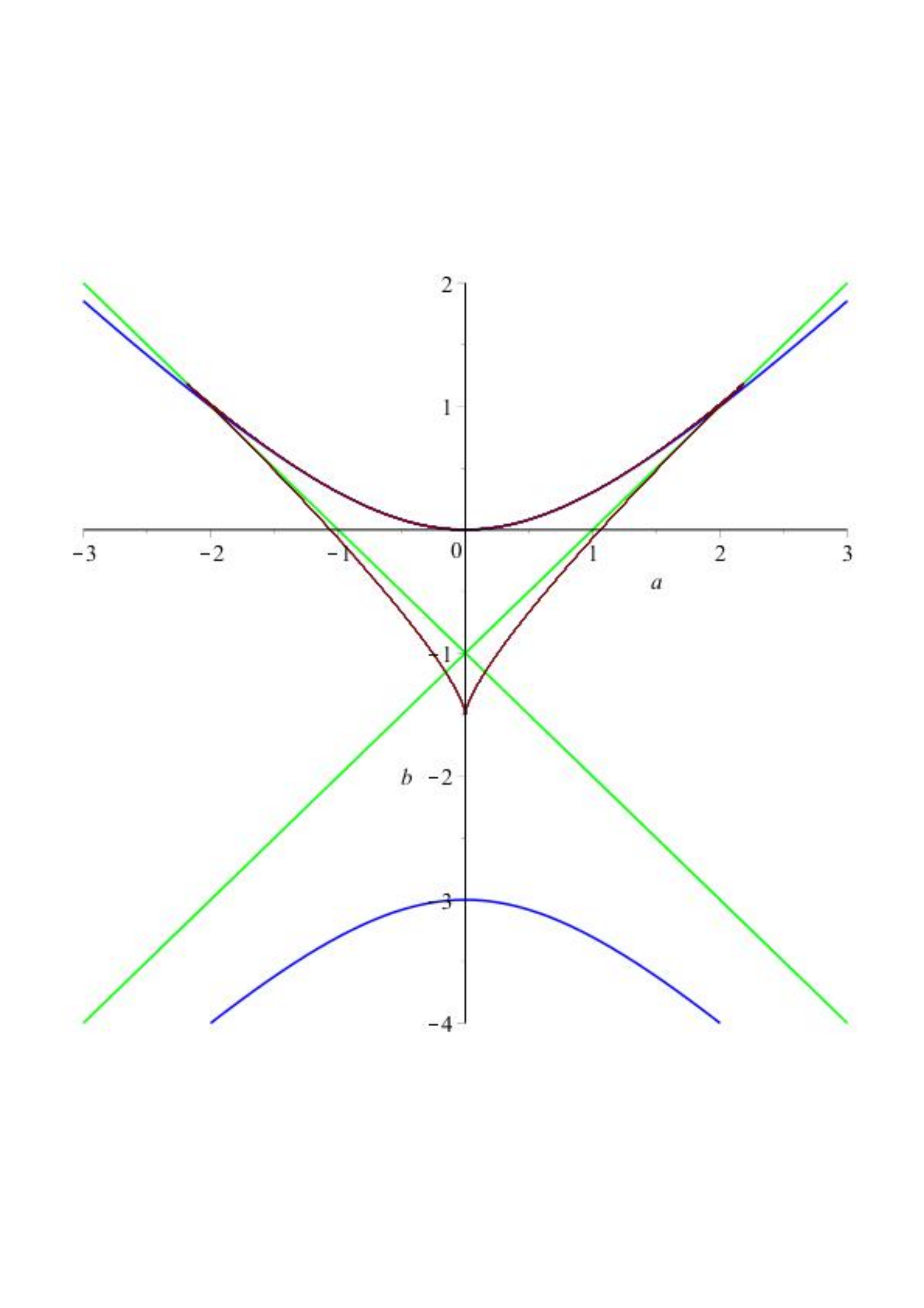}
		\caption{Partition of the $(s,t)$-plane by the exceptional curves, case (i) in \mbox{Proposition \ref{prop:1jet_bde_timelike_umb}}. The figures are drawn using Maple.}
		\label{fig:Part_TimelikeUmb}
	\end{figure}
	
	We determine in each open region in Figure \ref{fig:Part_TimelikeUmb} (last figure) the number and the type of the singularities of the lifted vector field $\xi$. Figure \ref{fig:Part_TimelikeUmb_Lines} shows the configurations of the lines of principal curvature in each open region of \mbox{Figure \ref{fig:Part_TimelikeUmb}}.
	\begin{figure}[h]
		\centering
		\includegraphics[scale=0.5]{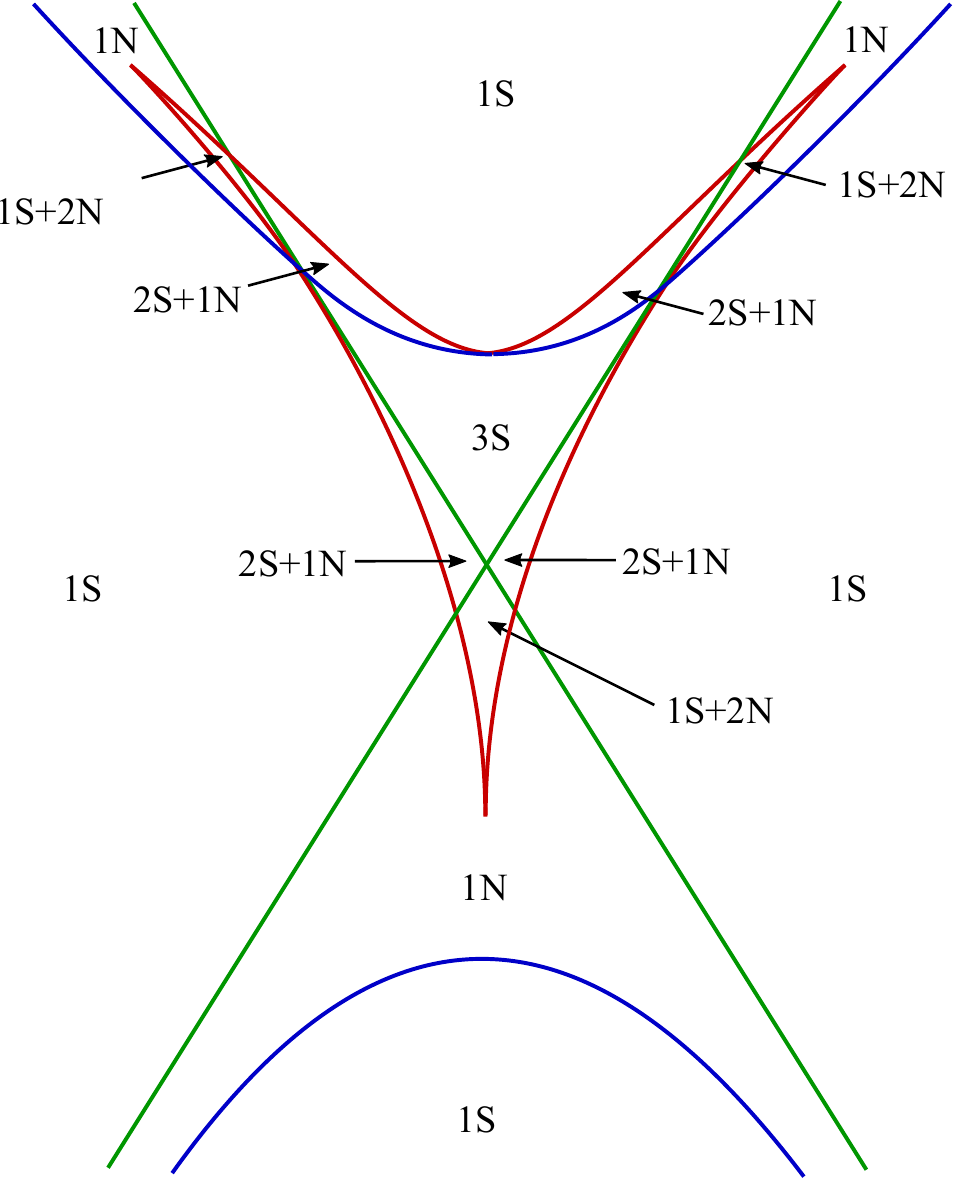}
		\caption{Topological types of the lines of principal curvature in the open regions in the $(s,t)$-plane.}
		\label{fig:Part_TimelikeUmb_Lines}
	\end{figure}
	
	For the case (iii) in Proposition \ref{prop:cubic}, the 2-jet of the discriminant 
	of the BDE is given by 
	$j^2\delta=(s\mp t\mp 1)(x\pm z)((3s\pm t\pm 5)x+(\mp 3s+5t+1)z)$. 
	Its has a singularity more degenerate than $A_1^{-}$ if, and only if, $s\mp t\mp 1=0$ 
	or $t=1$ (blue curve in \mbox{Figure \ref{fig:Part_TimelikeUmbLightDirc}} for the $(+)$ case).
	
	We have $\phi=(p\pm 1)(s p^2\pm t p^2\pm s p+2t p+s +2p\pm 1)$. It has a repeated root 
	when $3s^2-4t^2-4t-4=0$ (red hyperbola in Figure \ref{fig:Part_TimelikeUmbLightDirc}), or when 
	$s\mp t\mp 1=0$, that is, when the singularity of $\delta$ is more degenerate than $A_1^{-}$. 
	
	Here
	$\alpha=\frac{1}{2}(p\pm 1)(2s p\pm 2t p\pm s+t+3)$.
	Therefore, $\phi$ and $\alpha$ have always at least one common root. This means that the umbilic is 
	of codimension $\ge 1$. They have two roots in common when $(s,t)=(0,-1)$.
\end{proof}

\begin{figure}[h]
	\begin{center}
		\includegraphics[scale=0.25]{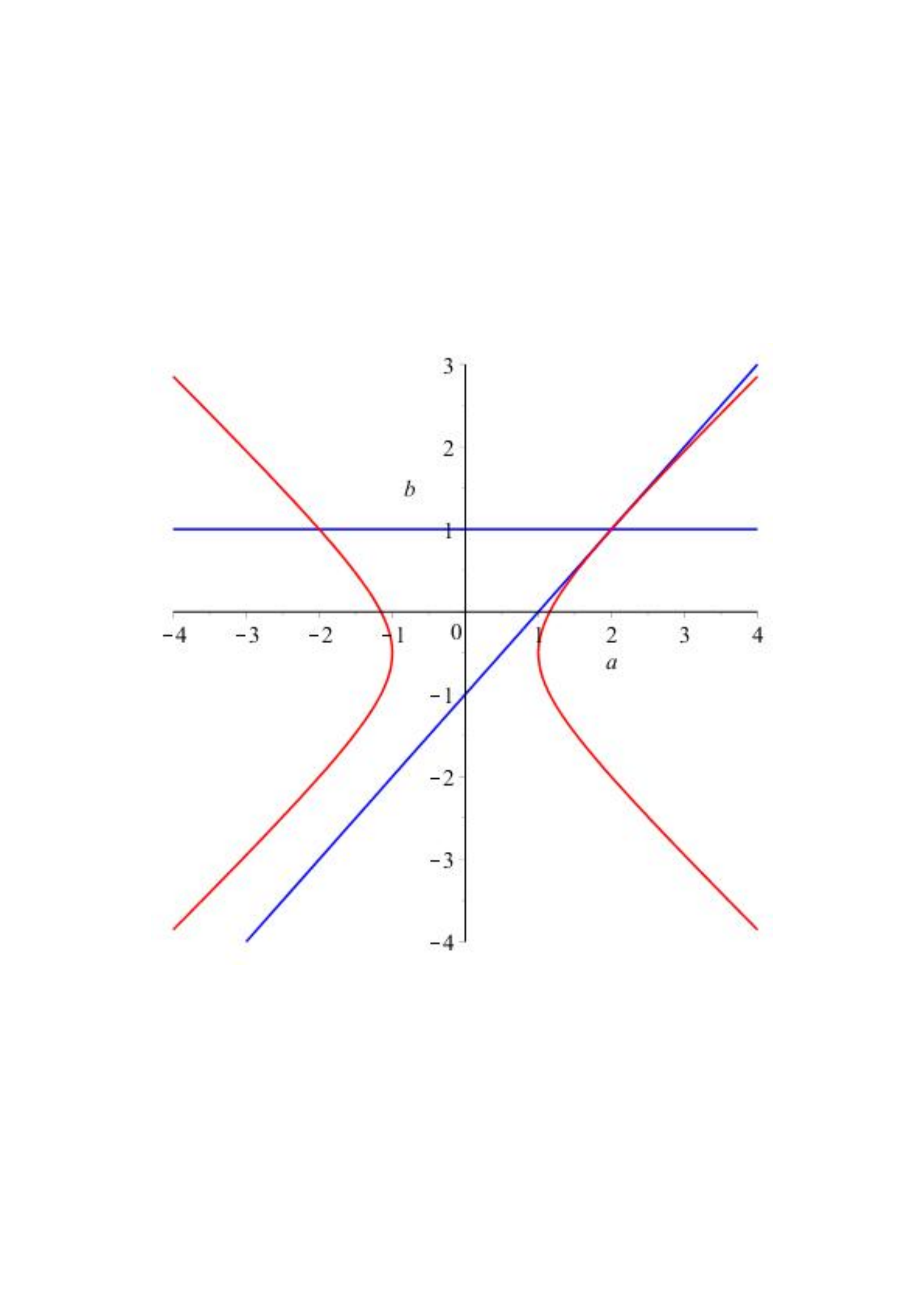}
		\caption{Partition of the $(s,t)$-plane by the exceptional curves for case (iii)(+) in \mbox{Proposition \ref{prop:1jet_bde_timelike_umb}}. For the $(-)$ case, 
			the blue slanted line is replaced by its reflection with respect to the $t$-axis.}
		\label{fig:Part_TimelikeUmbLightDirc}
	\end{center}
\end{figure}

\begin{rem}
	{\rm 
		The change of coordinates $(x,z)\mapsto (z,x)$ in Proposition \ref{prop:1jet_bde_timelike_umb} does not preserve the metric, however it preserves the configuration of the BDEs 
		as well as the invariant defined in this paper, so it is enough to work with the forms (i), (iii). 
		(The BDEs (i) and (iii) are not topologically equivalent.)
	}
\end{rem}

\begin{rem}
	{\rm The green and blue curves in Figures \ref{fig:PartitionA_1^+} and \ref{fig:Part_TimelikeUmb_Lines}, respectively, represent the parameter values for which the multiplicity is greater than 1. At such points one needs to use higher jets of $f$ 
		in order to compute the multiplicity of the umbilic point.}
\end{rem}

Lightlike umblic points are not stable, however we need the following notation and results.
When ${\rm p}$ is a lightlike umbilic point, we can parametrise $M$ locally at ${\rm p}$ by $(x,y)\mapsto {\bx}(x,y)$ with $(x,y)$ in some neighbourhood of the origin and
\begin{equation}\label{paramLightUmb}
	{\bx}(x,y)=(x,y, \pm x+a_{22}y^2+a_{30}x^3+a_{31}x^2y+a_{32}xy^2+a_{33}y^3+O_4(x,y));
\end{equation}
see Theorem 2.6.4 in \cite{MarcoThesis}. 
The following follows by straightforward calculations.

\begin{prop}\label{prop:Lightlikeumbilic}
	Let ${\rm p}$ be a lightlike umbilic point and suppose that $M$ is parametrised locally at ${\rm p}$ as in {\rm (\ref{paramLightUmb})}. 
	Then,
	
	{\rm (i)} The 1-jet of the BDE of the lines of principal curvature is given by 
	$$j^1\omega=2((2a_{22}^2-a_{32})y-a_{31}x)dy^2-2(3a_{30}u+a_{31}v)dxdy.
	$$
	
	{\rm (ii)} The $LPL$ is singular at ${\rm p}$ and has generically an $A_3$-singularity.
	
	{\rm (iii)} The $LD$  is singular at ${\rm p}$ and has an $A_1^\pm $-singularity if, and only if, 
	$$
	6a_{22}^2a_{30}\pm 3a_{30}a_{32}\mp a_{31}^2\ne 0.
	$$
\end{prop}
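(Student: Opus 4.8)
The plan is to realise $M$ locally at ${\rm p}$ as the graph $z=g(x,y)$ of the function in (\ref{paramLightUmb}), i.e.\ $g(x,y)=\pm x+a_{22}y^2+a_{30}x^3+a_{31}x^2y+a_{32}xy^2+a_{33}y^3+O_4(x,y)$, and then to expand everything in Taylor series. First I would record $g_x,g_y,g_{xx},g_{xy},g_{yy}$ to the needed order and compute $E=1-g_x^2$, $F=-g_xg_y$, $G=1-g_y^2$ together with the unnormalised second fundamental form coefficients $\bar l=g_{xx}$, $\bar m=g_{xy}$, $\bar n=g_{yy}$ (obtained from the Lorentzian cross product $\bx_x\times\bx_y$, up to an overall sign which is irrelevant for the BDE). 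At the origin $E=0$, $F=0$, $G=1$, so ${\rm p}$ lies on the $LD$, $G\neq 0$, and we are in the third case of Proposition \ref{prop:mult_formula}. The coefficients $\mathfrak a=F\bar n-G\bar m$, $\mathfrak b=E\bar n-G\bar l$, $\mathfrak c=E\bar m-F\bar l$ are then explicit analytic functions whose jets can be read off directly.

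For (i), the point is that $1-g_x^2=\mp(6a_{30}x^2+4a_{31}xy+2a_{32}y^2)+O_3$ vanishes to second order while $g_{xy}=O_1$, so $\mathfrak c=(1-g_x^2)g_{xy}+g_xg_yg_{xx}$ has vanishing linear part, with quadratic part $\pm 4a_{22}y\,(3a_{30}x+a_{31}y)$; hence $j^1\omega_P$ has no $dx^2$-term. The linear parts of $\mathfrak b$ and $\mathfrak a$ come from $-(1-g_y^2)g_{xx}$ and $-g_xg_yg_{yy}-(1-g_y^2)g_{xy}$, and a short computation gives $j^1\mathfrak b=-2(3a_{30}x+a_{31}y)$ and, once the signs of the cross product and of (\ref{eq:principalLD}) are fixed consistently, $j^1\mathfrak a=2((2a_{22}^2-a_{32})y-a_{31}x)$; together with $j^1\mathfrak c=0$ this is the displayed 1-jet. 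For (iii), I would use the identity $F^2-EG=g_x^2+g_y^2-1=:\delta$, so the $LD$ is $\{\delta=0\}$. Expanding, $j^2\delta=\pm 6a_{30}x^2\pm 4a_{31}xy+(4a_{22}^2\pm 2a_{32})y^2$; in particular $\delta$ always vanishes to order $\ge 2$, so the $LD$ is singular at ${\rm p}$, and it has a Morse singularity exactly when this quadratic form is nondegenerate, i.e.\ when its discriminant is nonzero. Computing the discriminant and dividing by the (nonzero) numerical factor $\mp 16$ yields $6a_{22}^2a_{30}\pm 3a_{30}a_{32}\mp a_{31}^2\ne 0$, the sign of the Hessian determinant distinguishing $A_1^+$ from $A_1^-$.

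For (ii), the $LPL$ near ${\rm p}$ is the zero set of the discriminant $D=\mathfrak b^2-4\mathfrak a\mathfrak c$ of (\ref{eq:principalLD}). Since $j^1\mathfrak c=0$, we get $j^1 D=0$ (so the $LPL$ is singular at ${\rm p}$) and $j^2 D=(j^1\mathfrak b)^2=4L^2$ with $L=3a_{30}x+a_{31}y$, a rank-one quadratic form whenever $(a_{30},a_{31})\neq(0,0)$; thus $D$ is of type $A_{\ge 2}$. To exclude $A_2$ one goes one order further: the homogeneous degree-$3$ part of $D$ equals $-4L\,\mathfrak b_2-4\,\mathfrak a_1\mathfrak c_2$, where $\mathfrak b_2$ is the quadratic part of $\mathfrak b$, $\mathfrak a_1$ the linear part of $\mathfrak a$, and $\mathfrak c_2=\pm 4a_{22}yL$ the quadratic part of $\mathfrak c$ found in step (i). As every term is divisible by $L$, the cubic part of $D$ vanishes on the line $\{L=0\}$, so in the splitting-lemma normal form $D\sim_{\mathcal R}X^2\pm h(Y)$ one has $h(Y)=O(Y^4)$, i.e.\ $D$ is of type $A_{\ge 3}$. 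The coefficient of $Y^4$ in $h$ is a polynomial in the $4$-jet of $g$ that is not identically zero, so generically it is nonzero and $D$ has an $A_3$-singularity.

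The computations are routine, and I expect the genuine content to lie in (ii): the fact that the $LPL$ is generically $A_3$ rather than $A_2$ rests on the not a priori obvious divisibility of the quadratic part of $\mathfrak c$ by the linear form $L$ (with $j^1\mathfrak b=-2L$), which is exactly what kills the cubic term in the normal form. The only other delicate point is the bookkeeping of signs in (i) — of the Lorentzian cross product, of the $\pm x$ in (\ref{paramLightUmb}), and of the overall normalisation of (\ref{eq:principalLD}) — so that the displayed 1-jet appears with the stated signs.
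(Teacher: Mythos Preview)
Your proposal is correct and follows exactly the approach the paper indicates: the paper's entire proof is the sentence ``follows by straightforward calculations'', and you have carried out precisely those calculations. Your treatment of (ii) is in fact more informative than the paper's, since you isolate the structural reason the $LPL$ is $A_{\ge 3}$ rather than $A_2$ --- namely that $\mathfrak c_2=\pm 4a_{22}yL$ is divisible by the linear form $L=3a_{30}x+a_{31}y$ defining $j^1\mathfrak b$, so the cubic part of $D=\mathfrak b^2-4\mathfrak a\mathfrak c$ vanishes on $\{L=0\}$; your caveat about sign bookkeeping in (i) is also well placed (note the displayed $j^1\omega$ in the statement mixes $(x,y)$ and $(u,v)$ and suppresses the $\pm$ dependence, so one should expect to match it only up to those conventions).
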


\begin{acknow}
	MACF was financed by the Coordenação de Aperfeiçoamento de Pessoal de Nível Superior – Brasil (CAPES) – Finance Code 001.
	FT was partially supported by the 
	FAPESP Thematic project grant 2019/07316-0  and the CNPq research grant 303772/2018-2.
\end{acknow}


\noindent
MACF and FT: Instituto de Ci\^encias Matem\'aticas e de Computa\c{c}\~ao - USP, Avenida Trabalhador s\~ao-carlense, 400 - Centro,
CEP: 13566-590 - S\~ao Carlos - SP, Brazil.\\
E-mail: faridtari@icmc.usp.br


\begin{thebibliography}{99}

\bibitem{Ando_etal}
N. Ando, T. Fujiyama, M. Umehara,  $C^1$-umbilics with arbitrarily high indices. 
{\it Pacific J. Math.} 288 (2017), 1--26.

\bibitem{arnold}
V. I. Arnol'd, S. M.  Guse\u\i n-Zade and A. N. Varchenko,
{\it Singularities of differentiable maps}. Vol. I. The
classification of critical points, caustics and wave fronts.
Monographs in Mathematics, {82}. Birkhuser, 1985.	

\bibitem{bruce84} 
J. W. Bruce, Projections and reflections of generic surfaces in $\R^3$. {\it Math. Scand.} 54 (1984), 262--278.
	
\bibitem{bruce-fidal} 
J. W. Bruce and D. L. Fidal, On binary differential equations and umbilics. {\it Proc. Royal Soc. Edinburgh}, 111A (1989), 147--168.

\bibitem{BFT}
J. W. Bruce, G. J.  Fletcher and F. Tari,  Zero curves of families of curve congruences. 
Real and complex singularities, 1--18, {\it Contemp. Math.}, 354, Amer. Math. Soc., Providence, RI, 2004.

	
\bibitem{bdes} 
J. W. Bruce and F. Tari, On binary differential equations. {\it Nonlinearity} 8 (1995), 255--271.
	
\bibitem{1pbde} 
J. W. Bruce and F. Tari, Generic 1-parameter families of binary differential equations of Morse type. {\it Discrete Contin. Dynam. Systems} 3 (1997), 79--90.
	
\bibitem{mult} 
J. W. Bruce and F. Tari, On the multiplicity of implicit differential equations.  {\it J. Differential Equations} 148 (1998), 122--147.
	
\bibitem{Couto_Lymb}
I. T. Couto and A. Lymberopoulos,
{\it Introduction to Lorentz Geometry: Curves and Surfaces}. Chapman and Hall/CRC, 2021.

\bibitem{davbook} A. A. Davydov, {\it Qualitative control theory}. Translations of Mathematical Monographs 142, AMS, Providence, R.I., 1994.
		
\bibitem{DiasTari}
F. S. Dias and F. Tari, On the geometry of the cross-cap in the Minkoswki 3-space and binary differential equations. {\it Tohoku Math. J.} 68 (2016), 293--328.

\bibitem{MarcoThesis} M. A. C. Fernandes,
{\it Umbilic points and special curves on surfaces in Minkowski space}. 
PhD thesis, University of São Paulo, 2021.

\bibitem{Fletcher} 
G.J. Fletcher, {\it Geometrical problems in computer vision}. Ph.D thesis, Liverpool University, 1996.

\bibitem{GarciaSotoGut_CrossCap}
R. Garcia, J. Sotomayor and C. Guttierez, Lines of principal curvature around umbilics and Whitney umbrellas. {\it Tohuku Math. J.} 52 (2000),163--172.

\bibitem{GarciaSoto}
R. Garcia and J. Sotomayor, Codimension two umbilic points on surfaces immersed in $\R^3$. {\it Discrete Contin. Dynam. Systems} 17 (2007), 293--308.


\bibitem{Ghomi_Howard}
M. Ghomi and R. Howard, Normal curvatures of asymptotically constant graphs and Carathéodory's conjecture. 
{\it Proc. Amer. Math. Soc.} 140 (2012), 4323--4335. 

\bibitem{GarciaSotoGut}
C. Gutierrez, J. Sotomayor and R. Garcia, Bifurcations of umbilic points and related principal cycles. {\it J. Dynam. Differential Equations} 16 (2004), 321--346.

\bibitem{Gut_Soto_Resenha}
C. Gutierrez and J. Sotomayor, Lines of curvature, umbilic points and Carathéodory conjecture. 
{\it Resenhas} 3 (1998), 291--322.

\bibitem{Hasegawa_Tari}
M. Hasegawa and F. Tari, 
On umbilic points on newly born surfaces. {\it Bull. Braz. Math. Soc.}  48 (2017), 679--696.

\bibitem{Izu-Machan-Farid} 
S. Izumiya, M. Takahashi and F. Tari, Folding maps on spacelike and timelike surfaces and duality.
{\it Osaka J. Math.} 47 (2010), 839--862.

\bibitem{IzumiyaTari}
S. Izumiya and F. Tari, Self-adjoint operators on surfaces with singular metrics. {\it J. Dyn. Control Syst.} 16 (2010), 329--353. 

\bibitem{joey} 
J. M. Oliver, {\it Pairs of geometric foliations of regular and singular surfaces}. PhD Thesis, Durham University, 2010.

\bibitem{oNeill} 
B. O'Neill, {\it Semi-Riemannian geometry with applications to relativity}. Academic Press, 1983.

\bibitem{pei} 
D. Pei, Singularities of $\mathbb R P\sp 2$-valued Gauss maps of
surfaces in Minkowski 3-space. {\it Hokkaido Math. J.} 28 (1999), 97--115.

\bibitem{porteousBook} 
I. R. Porteous, {\it Geometric differentiation. For the intelligence of curves and surfaces}. Second edition. 
Cambridge University Press, Cambridge, 2001

\bibitem{porteous} 
I. Porteous, The normal singularities of a submanifold. {\it J. Differential Geom.} 5 (1971), 543--564.

\bibitem{Smith_Xavier} B. Smyth and F. Xavier, 
A sharp geometric estimate for the index of an umbilic on a smooth surface. 
{\it Bull. London Math. Soc.} 24 (1992), 176--180.

\bibitem{sotogut} 
J. Sotomayor and C. Gutiérrez, Structurally stable configurations of lines of principal curvature.
Bifurcation, ergodic theory and applications (Dijon, 1981), 195--215,  {\it Astérisque} , 98--99, Soc. Math. France, Paris, 1982.

\bibitem{SotoLuis}
J. Sotomayor and F. L. Mello,  A note on some developments on Carathéodory conjecture on umbilic points. 
{\it Exposition. Math.} 17 (1999), 49--58.

\bibitem{Tari_Crosscap}
F. Tari, On pairs of geometric foliations on the cross-cap.  {\it Tohoku Math. J.} 59 (2007), 233--258.

\bibitem{pairs} 
F. Tari,  Pairs of foliations on surfaces. {\it Real and Complex Singularities}, Edited by M. Manoel, M. C. Romero Fuster, C. T. C. Wall.
London Mathematical Society Lecture Notes Series 380 (2010), 305--337.

\bibitem{causticsR31} 
F. Tari,  Caustics of surfaces in the Minkowski 3-space. {\it  Q. J. Math.} 63 (2012), 189--209. 

\bibitem{CaratheodoryR31} 
F. Tari,  Umbilics of surfaces in the Minkowski 3-space. {\it J. Math. Soc. Japan}, 65 (2013), 723--731.

\bibitem{wall} C. T. C. Wall, Finite determinacy of smooth map-germs. {\it Bull. Lond. Math. Soc.} 13 (1981), 481--539.

\end{thebibliography}
\end{document}